\documentclass[11pt]{article}
\usepackage{fix-cm}
\usepackage{geometry}
\usepackage{amsmath}
\usepackage{amssymb}
\usepackage{amsthm}
\usepackage{graphicx}
\usepackage{multicol}
\usepackage{microtype}
\usepackage{url}
\usepackage[authoryear]{natbib}
\usepackage{float}

\usepackage{booktabs}
\usepackage{array}

\usepackage{algorithm}
\usepackage{algpseudocode}
\algrenewcommand\algorithmiccomment[1]{\hfill{\footnotesize // #1}}

\usepackage{etoolbox}
\AtBeginEnvironment{proof}{\par\smallskip}
\AtEndEnvironment{proof}{\par\smallskip}
\AtBeginEnvironment{proposition}{\par\smallskip}
\AtEndEnvironment{proposition}{\par\smallskip}
\newtheorem{proposition}{Proposition}
\newtheorem{corollary}{Corollary}
\newtheorem{remark}{Remark}
\newtheorem{theorem}{Theorem}
\newtheorem{definition}{Definition}

\DeclareMathOperator{\supp}{supp}

\newcommand{\LL}{\mathrm{L}}
\newcommand{\MM}{\mathrm{M}}
\newcommand{\UU}{\mathrm{U}}

\begin{document}

\title{A Geometric Witness Framework for Signed Multivariate Tail-Dependence Compatibility: Asymptotic Structure and Finite-Threshold Synthesis}
\author{Janusz Milek\\
Independent Researcher\\
\texttt{janusz.milek@alumni.ethz.ch}}
\date{}
\maketitle

\begin{abstract}
We study multivariate tail-dependence compatibility for complete and partial
signed tail families, treating lower-tail, upper-tail, and mixed configurations
in one geometric witness representation indexed by active coordinate sets and
sign patterns. For a complete signed tail family, witness generator weights
\(w=(w_{I,\sigma})\) give a linear incidence parametrization and are recovered
by explicit triangular inversion. Excluding the geometric scale \(p_0\), the
complete case uses \(3^d-1\) generator weights, matching the number of complete
signed tail coefficients; for partial specifications, only selected target
coefficients need be prescribed. At a fixed threshold \(p_0\in(0,\tfrac12)\),
the inversion identifies the normalized noncentral ternary cell masses of any
realizing copula. Hence finite-threshold compatibility is characterized by
nonnegative recovered generator weights, singleton normalization, and the
residual central-mass constraint.

This yields a complete M\"obius-type synthesis within the witness framework. If
the recovered increments are nonnegative and singleton normalization holds,
\(S(w)=\sum w\) determines the admissible finite-scale range, and every
admissible \(p_0\) gives an exact witness realization. In the canonical ray
geometry, such a realization preserves the same complete signed tail family
throughout \(0<p\le p_0\). Thus the primary object is the complete signed tail
family \(\lambda\): it is realized at every admissible finite scale and can be
carried along witness copulas with \(p_0\downarrow0\).

Partial, noisy, or inconsistent specifications are treated through
linear-feasibility and weighted-\(\ell^1\) recovery problems in the same
parametrization. The representation separates the \(p_0\)-free incidence/M\"obius
layer from finite-threshold realization and provides tools for realization,
simulation, calibration, completion, repair, and scenario design.
\end{abstract}

\medskip
\noindent\textit{MSC 2020.}
MSC 2020. Primary 60E05; Secondary 62H20, 90C05.

\medskip
\noindent\textit{Key words and phrases.}
Copulas, signed multivariate tail dependence, asymptotic analysis,
M\"obius inversion, incidence algebra, linear programming.

\section{Introduction}

Tail-dependence coefficients\footnote{For orientation, in the all-lower case
this refers to normalized joint-tail probabilities for the variables indexed by \(J\),
of the form \(p^{-1}\Pr(U_j\le p \text{ for all } j\in J)\), equivalently
\(C_J(p,\ldots,p)/p\) for the \(J\)-marginal copula, or to their limits as
\(p\downarrow0\) when such limits are considered. The present paper uses the
corresponding signed lower-/upper-/mixed finite-threshold readouts, made
explicit below through ternary tail totals.} are standard descriptors of extremal dependence
in copula theory and multivariate dependence modelling; see, for instance,
\citet{Joe2014} and \citet{Nelsen2006}. They also arise in applications such
as stress testing and risk aggregation \citep{MFE2015}. The signed setting considered here extends this viewpoint to lower-tail,
upper-tail, and mixed co-movements, in both bivariate and multivariate form. This is particularly relevant in
areas such as environmental risk, where large deviations in either direction
from a reference state may be harmful and where joint excursions can amplify
their effects. The main question studied here is mathematical: whether a
prescribed family of extremal dependence coefficients is \emph{compatible},
that is, whether there exists a multivariate model with continuous margins
realizing the target coefficients.

In the classical unsigned upper-tail setting, compatibility admits elegant
characterization based on Bernoulli-type constructions due to Embrechts, Hofert,
and Wang~(2016), abbreviated here as \emph{EHW}, and
subsequent developments \citep{KrauseScherer2018}. In broader multivariate copula settings, upper-
and lower-tail coefficients have also been studied as structural summaries of
joint extremal behaviour \citep{DeLucaRivieccio2012}, while extreme-value
copulas provide a classical asymptotic benchmark for dependence between rare
events \citep{GudendorfSegers2010}.  However, the signed multivariate setting considered here is genuinely
different. Lower-tail, upper-tail, and mixed configurations must be treated
simultaneously, and finite-grid realizations depend on the threshold. This
creates a mismatch between the tail-level coefficients one wants to prescribe
and the finite-scale objects produced by constructive realizations.

To resolve this mismatch, we introduce a constructive witness representation
for signed multivariate tail dependence. The tail level is described by
witness weights indexed by active coordinate sets and sign patterns, while at
finite threshold the same weights determine ternary cell masses on a partition
of the unit hypercube. This yields an exact linear \(p_0\)-free description of
complete signed tail families, explicit triangular inversion, and a direct
route to linear programming, implementation, and finite-scale simulation.

A finite-threshold realization naturally lives on the ternary state space
\[
\{\LL,\MM,\UU\}^d,
\]
where \(\LL\), \(\MM\), and \(\UU\) denote lower-tail, middle, and upper-tail
regions. Throughout the paper, \(p_0\in(0,\tfrac12)\) is the finite tail scale
used to partition each marginal axis into \([0,p_0]\), \([p_0,1-p_0]\), and
\([1-p_0,1]\), with overlaps only at endpoints and hence as a ternary
partition modulo null sets. The corresponding ternary cell probabilities
\[
(q_a)_{a\in\{\LL,\MM,\UU\}^d}
\]
belong to this finite geometric scale. By contrast, the prescribed signed tail
family is a tail-level object. To connect these two levels, we use witness
weights \(w_{I,\sigma}\), indexed by active coordinate sets and sign patterns.
They arise as the recovered increments of the triangular inversion and at
the same time serve as the basic parameters of finite-threshold realization,
through
\[
q_a = p_0\,w_{I(a),\sigma(a)}
\qquad\text{for every noncentral cell } a.
\]
Thus signed tail coefficients are linear incidence sums of witness weights,
while finite-\(p_0\) realization enters through the ternary masses and the
residual central mass.

Figure~\ref{fig:2d-flow} records the relation between the three objects used throughout the paper: witness weights $w$, the complete signed tail family $\lambda$, and the finite-threshold ternary masses $q$. The tail-level map $w \mapsto \lambda$ is inverted in the complete case by triangular inversion, whereas the finite-threshold relation between $w$ and $q$ is read in reverse only after fixing $p_0$.

This tail-level / finite-threshold split is useful in three ways. It gives a concrete witness representation rather than a purely existential compatibility statement. It yields an exact linear parametrization of complete signed tail families in witness weights. And it leads directly to computation: complete specifications admit explicit triangular inversion, while partial, noisy, or inconsistent specifications lead to linear feasibility or weighted approximation problems.

\par\vspace{\baselineskip}

\begin{figure}[H]
\centering
\includegraphics[width=1\textwidth]{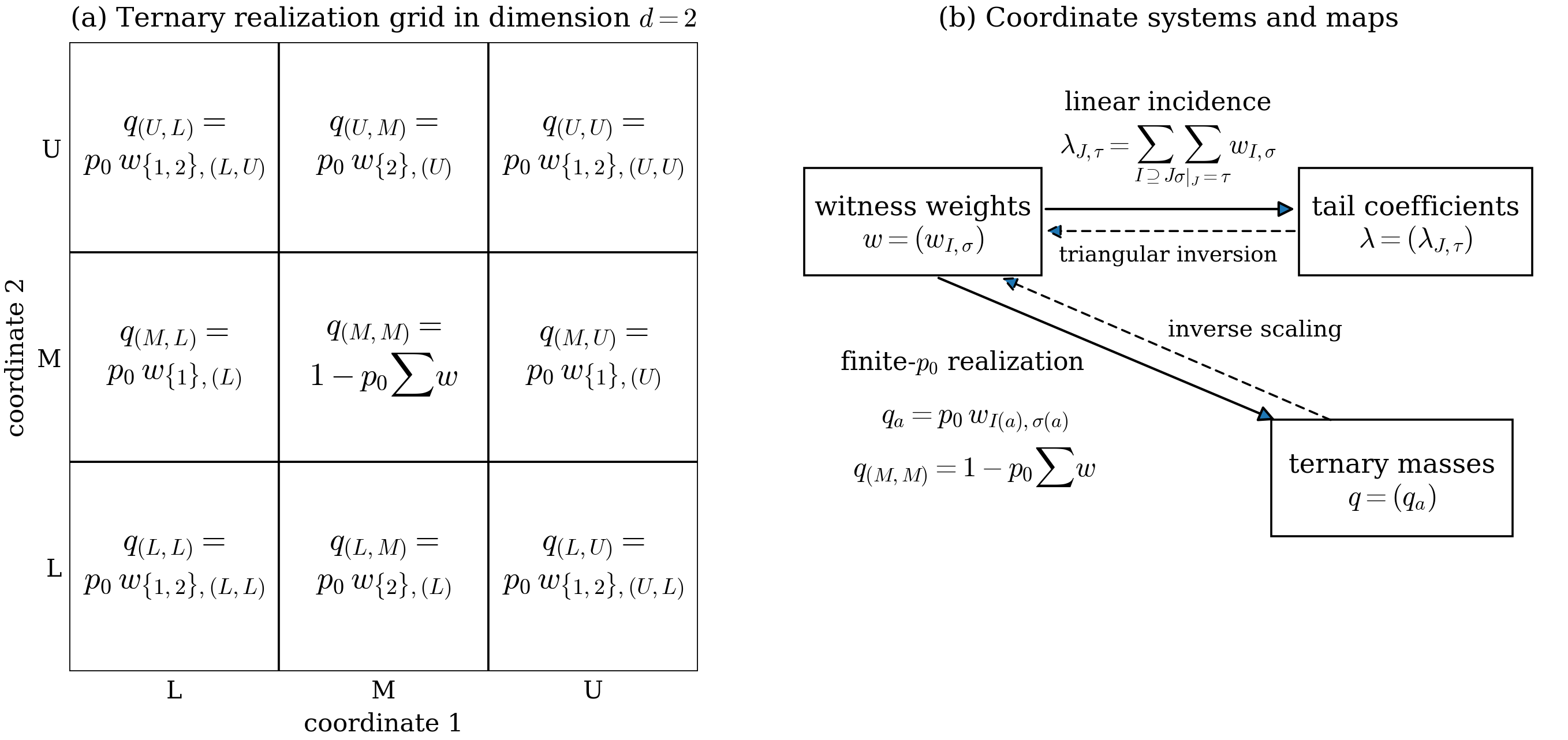}
\setlength{\abovecaptionskip}{0pt}
\caption{Left: the \(d=2\) finite-\(p_0\) ternary grid, where every noncentral
cell corresponds to a unique generator \((I,\sigma)\), so
\(q_a=p_0\,w_{I(a),\sigma(a)}\) for \(a\neq(\MM,\MM)\). Right: the three linked objects used throughout the paper. Witness weights \(w\)
are the tail-level parameters, signed tail coefficients \(\lambda\) are
obtained from \(w\) by linear incidence, and the finite-threshold ternary
masses \(q\) are obtained from \(w\) by finite-threshold realization,
equivalently by rescaling the noncentral generators at scale \(p_0\).
In the complete case, \(\lambda\mapsto w\) is recovered by triangular
inversion, while for fixed \(p_0\) the noncentral masses determine \(w\)
by inverse rescaling.}
\label{fig:2d-flow}
\end{figure}

\paragraph{Positioning relative to existing constructive approaches.}
The present framework is not simply a signed extension of the Bernoulli-based
upper-tail construction of Embrechts, Hofert, and Wang~(2016). In the pure
upper-tail complete regime, the witness incidence system reduces to the
familiar Boolean-type incidence structure and its inversion. However, the signed
setting studied here is broader: lower-tail, upper-tail, and inactive
coordinates must be handled simultaneously, and finite-threshold realizations
live on a ternary partition. This is why we distinguish between tail-level
coefficients and their finite-\(p_0\) realizations.

Earlier work by the author considered application-driven constructive atomic
representations for multivariate tail structures in the context of risk
aggregation. The present paper develops a different mathematical framework.
Its basic objects are witness generators indexed by active sets and sign
patterns, giving a \(3^d-1\) noncentral generator system, a linear
\(p_0\)-free parametrization of complete signed tail families, and an explicit
triangular recovery scheme. In this sense, the relation to the earlier work is
one of continuity of viewpoint rather than identity of method.

The link with M\"obius inversion is structural: the backward recovery developed
below is a triangular inversion on the partially ordered family of signed
active-set indices, so it recovers the increments of a complete signed tail
family. At the same time, the construction remains combinatorially controlled.
Instead of using a much larger family of candidate signed extremal patterns,
it works with a tractable geometric generator language that still captures
rich signed dependence configurations. The point is therefore not only an
analogy with M\"obius inversion, but an explicit constructive synthesis
linking tail-level structure, finite-\(p_0\) realization, and the
computational formulations developed later in the paper. Appendix~\ref{app:poset-moebius} makes this structural link explicit: the signed
incidence system is the zeta matrix of a nonzero signed ternary poset, every
interval of this poset is Boolean, and the complete-case recovery is the corresponding M\"obius inversion.

\paragraph{Asymptotic viewpoint.}
The complete signed tail family $\lambda$ is the primary asymptotic object.
At any admissible finite scale $p_0$, the canonical ray realization already
preserves that same family throughout the whole threshold range
$0<p\le p_0$. The vanishing-threshold viewpoint therefore does not serve to
recover $\lambda$ below a fixed $p_0$; rather, it expresses the freedom to
realize the same object along families of witness copulas with $p_0\downarrow0$.
This is possible because the incidence relation $w\mapsto\lambda$ is $p_0$-free,
whereas the geometric scale enters only through the finite-threshold realization,
exact margins, and the residual central mass.

\paragraph{Main contributions.}
The paper makes five main contributions. It introduces a signed geometric
witness language indexed by active sets and sign patterns, where ``active''
means that the coordinate lies in one of the two tail regions \(\LL\) or
\(\UU\), while inactive coordinates remain in the middle region \(\MM\). It
shows that complete signed multivariate tail families admit an exact linear
\(p_0\)-free parametrization in witness weights. It proves a complete-case
synthesis theorem: at a fixed finite threshold, triangular inversion recovers
the normalized noncentral ternary cell masses of any realizing copula, so
complete compatibility is characterized exactly by nonnegative recovered
increments, singleton normalization, and residual central mass. It then gives
a finite-threshold admissibility theorem: once the recovered tail-level
increments are nonnegative and singleton normalization holds, the admissible
scale range is explicit, and every admissible \(p_0\) yields an exact
realization of the same complete signed tail family. Finally, it formulates
partial, noisy, and inconsistent recovery problems as linear feasibility and
weighted-approximation problems in the same witness representation, and
connects this algebraic core to implementation, simulation, and expert-driven
calibration.

\paragraph{Practical relevance.}
Beyond its mathematical role, the witness copula provides an interpretable
scenario language for dependence. Its generators encode explicit joint
lower-tail, upper-tail, and mixed extremal behaviours, while allowing the remaining coordinates to vary within the middle region. This is useful when dependence is described through stress scenarios, expert
judgement, or partial structural information about extremal dependence rather
than through a fully specified parametric copula family. Within the same witness parametrization, such inputs can be calibrated, completed, or repaired by the linear methods developed later in the paper.

\paragraph{Structural conventions used throughout.}
All noncentral generators are normalized within one common framework.
The parameter \(p_0\) is shared across coordinates and fixes the geometric
realization scale, each noncentral generator is tail-scaled to total mass
\(p_0\), the central cell is treated as a residual component rather than as
another generator, and singleton coefficients are kept as one-dimensional
normalization constraints. Together, these conventions make the later
incidence matrix binary and the complete witness inversion triangular.

The paper is organized as follows. Section~\ref{sec:construction} introduces the witness language
of signed cells, generator indices, and the canonical ray geometry. Section~\ref{sec:tailmap}
develops the tail-level layer: the single-generator impact rule, the linear
incidence map $\lambda=Aw$, and the triangular inversion for complete
specifications. Section~\ref{sec:margins} passes to finite-$p_0$ ternary masses, develops the
corresponding tail-event combinatorics, and proves the central fixed-scale
invariance result showing that the canonical ray witness preserves the same
complete signed tail family throughout the whole range $0<p\le p_0$.
Section~\ref{sec:lp} extends the same witness parametrization to partial, noisy, and
inconsistent targets through linear programming. Section~\ref{sec:simulation} presents the
simulation layer, the five-dimensional benchmark, and the validation results.
Section~\ref{sec:formal-shell} gives the formal measure-theoretic shell showing that the witness
construction defines a copula. Section~\ref{sec:asymptotic} then draws the main consequences:
complete finite-threshold synthesis, the admissible scale range, and the
vanishing-threshold asymptotic interpretation. The discussion and conclusions
summarize the scope, limits, and significance of the results.

\section{Witness language, signed cells, and structural conventions}
\label{sec:construction}

This section fixes the geometric language used in the main line of the paper.
We fix a ternary partition of \([0,1]^d\), index each noncentral signed cell by
an active set and a sign pattern, and use those indices as the basic witness indexing system. Through Section~\ref{sec:benchmark}, the argument is then
signed-cell combinatorics, linear algebra, and finite-\(p_0\)
realization.

\subsection{Ternary partition and signed cells}
Throughout this section, let
\[
[d]:=\{1,\dots,d\}.
\]

Fix a geometric scale
\[
p_0\in(0,1/2).
\]
Define the three basic intervals
\[
L_{p_0}:=[0,p_0],
\qquad
M_{p_0}:=[p_0,1-p_0],
\qquad
U_{p_0}:=[1-p_0,1].
\]
These intervals overlap only at endpoints, so the resulting ternary decomposition
is a genuine partition modulo null sets.

For a nonempty active set \(I\subseteq[d]\), a sign pattern
\(\sigma\in\{\LL,\UU\}^{I}\)\footnote{Here and throughout, for a finite index set \(I\), the notation \(\{\LL,\UU\}^{I}\) denotes the set of all sign patterns indexed by \(I\), equivalently all functions \(\sigma:I\to\{\LL,\UU\}\). Thus \(\sigma=(\sigma_i)_{i\in I}\) assigns to each active coordinate \(i\in I\) either \(\LL\) or \(\UU\).}, and \(i\in[d]\), define
\[
A_i^{(I,\sigma)} :=
\begin{cases}
L_{p_0}, & i\in I \text{ and } \sigma_i=\LL,\\[1mm]
U_{p_0}, & i\in I \text{ and } \sigma_i=\UU,\\[1mm]
M_{p_0}, & i\notin I.
\end{cases}
\]
The corresponding signed ternary cell is
\[
B_{I,\sigma}^{(p_0)}
:=
\prod_{i=1}^d A_i^{(I,\sigma)}
\subseteq [0,1]^d.
\]

\begin{definition}[Central cell]
The central cell is
\[
B_0^{(p_0)} := M_{p_0}^d.
\]
\end{definition}

\begin{remark}
Geometrically, a generator \((I,\sigma)\) places the coordinates in \(I\) in
the lower-tail or upper-tail regions \(\LL\) and \(\UU\) according to the signs in
\(\sigma\), while all coordinates outside \(I\) remain in the middle region \(\MM\).
In dimension \(3\), this yields a partition into the central cube and
\(3^3-1=26\) noncentral signed components.
\end{remark}

\subsection{Generator indices and the noncentral cell correspondence}
The primitive noncentral generators are indexed by active coordinate sets and
sign patterns.

\begin{definition}[Generator index set]
Define
\[
\mathcal G_d
:=
\{(I,\sigma): \emptyset\neq I\subseteq[d],\ \sigma\in\{\LL,\UU\}^{I}\}.
\]
\end{definition}

\begin{remark}
Its cardinality is
\[
|\mathcal G_d|
=
\sum_{k=1}^d \binom{d}{k}2^k
=
3^d-1.
\]
Thus the number of primitive noncentral generators coincides with the number of
noncentral ternary cells in \(\{\LL,\MM,\UU\}^d\). The correspondence is
canonical: a noncentral ternary cell determines its active set \(I\subseteq[d]\)
by the coordinates not equal to \(\MM\), and its signs on \(I\) determine a
unique pattern \(\sigma\in\{\LL,\UU\}^I\); conversely, every
\((I,\sigma)\in\mathcal G_d\) determines exactly one noncentral signed cell
\(B^{(p_0)}_{I,\sigma}\).
\end{remark}

\begin{remark}
The sign pattern is indexed only on the active set \(I\). Inactive coordinates
lie in the middle region and therefore do not carry tail signs. This removes
the redundancy that would arise from indexing generators by full
\(d\)-dimensional sign vectors.
\end{remark}

\subsection{Support geometry of a generator}

Combinatorially, a generator is uniquely identified by its signed cell.
In this paper, however, the in-cell geometry is not an additional degree of
freedom. Unless explicitly stated otherwise, a witness generator means the
canonical shared-factor generator below. The following construction is therefore
the fixed generator geometry used throughout the paper.

\begin{proposition}[Canonical noncentral generators and the central component]
\label{prop:canonical-generators}
For every \((I,\sigma)\in\mathcal G_d\) and every \(p_0\in(0,1/2)\), let
\(Z\sim \mathrm{Unif}[0,1]\), and independently let \((V_i)_{i\notin I}\) be
i.i.d.\ \(\mathrm{Unif}[0,1]\). Define a random vector
\(U^{(I,\sigma)}=(U_1,\dots,U_d)\) by
\[
U_i=
\begin{cases}
p_0 Z, & i\in I \text{ and } \sigma_i=\LL,\\[1mm]
1-p_0 Z, & i\in I \text{ and } \sigma_i=\UU,\\[1mm]
p_0+(1-2p_0)V_i, & i\notin I.
\end{cases}
\]
Then \(U^{(I,\sigma)}\) is supported on \(B_{I,\sigma}^{(p_0)}\), each active
coordinate is uniform on its prescribed tail interval, and each inactive
coordinate is uniform on \(M_{p_0}\).

For the central cell, define
\[
U_i^{(0)}:=p_0+(1-2p_0)V_i,\qquad i=1,\dots,d,
\]
with \(V_1,\dots,V_d\) i.i.d.\ \(\mathrm{Unif}[0,1]\). Then
\(U^{(0)}=(U_1^{(0)},\dots,U_d^{(0)})\) is supported on the central cell
\(B^{(p_0)}_0\) and has uniform one-dimensional marginals on \(M_{p_0}\).
\end{proposition}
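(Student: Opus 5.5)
The proof is a direct verification, coordinate by coordinate, using only that affine images of a uniform variable are uniform. We treat the three cases in the definition of \(U^{(I,\sigma)}\) separately, since each coordinate is an explicit function of either \(Z\) or a single \(V_i\).

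\emph{Lower-active coordinates.} For \(i\in I\) with \(\sigma_i=\LL\), the map \(z\mapsto p_0z\) is an increasing affine bijection \([0,1]\to[0,p_0]=L_{p_0}\). Hence \(U_i=p_0Z\) is uniform on \(L_{p_0}\), since \(\Pr(U_i\le t)=t/p_0\) for \(t\in[0,p_0]\).

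\emph{Upper-active coordinates.} For \(\sigma_i=\UU\), the map \(z\mapsto 1-p_0z\) is a decreasing affine bijection \([0,1]\to[1-p_0,1]=U_{p_0}\). Because \(1-Z\) is again uniform, \(U_i\) is uniform on \(U_{p_0}\).

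\emph{Inactive coordinates.} For \(i\notin I\), the map \(v\mapsto p_0+(1-2p_0)v\) is an increasing affine bijection \([0,1]\to[p_0,1-p_0]=M_{p_0}\). This uses \(p_0<1/2\), so that \(1-2p_0>0\). Hence \(U_i\) is uniform on \(M_{p_0}\).

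Support follows from these three cases. Every realization of \((Z,(V_i)_{i\notin I})\) lies in \([0,1]^{1+d-|I|}\). Therefore each coordinate \(U_i\) lies surely in \(A_i^{(I,\sigma)}\), and the vector \(U^{(I,\sigma)}\) lies surely in the product \(\prod_i A_i^{(I,\sigma)}=B^{(p_0)}_{I,\sigma}\). The shared factor \(Z\) makes the active coordinates comonotone or countermonotone with one another. This does not affect the claim, which concerns only support and one-dimensional marginals. Independence of \(Z\) and the \(V_i\) is needed only for the joint law to be well defined, not for the marginal statements.

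The central component \(U^{(0)}\) is exactly the inactive-coordinate case applied to all \(d\) coordinates. So each \(U^{(0)}_i\) is uniform on \(M_{p_0}\) and lies in it surely, which gives support in \(M_{p_0}^d=B_0^{(p_0)}\).

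There is no substantive obstacle. The only points needing care are checking that \(1-2p_0>0\), and noting that endpoint overlaps of the intervals are irrelevant, because the claims are statements about sure containment and about marginal laws.
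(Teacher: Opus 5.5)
Your proposal is correct and follows the same route as the paper's proof. Both check coordinate by coordinate that $p_0Z$, $1-p_0Z$, and $p_0+(1-2p_0)V_i$ are uniform on $L_{p_0}$, $U_{p_0}$, and $M_{p_0}$, get the support claim from the product form of $B^{(p_0)}_{I,\sigma}$, and treat the central component as the all-inactive special case; you simply spell out details the paper calls immediate, such as $1-2p_0>0$ and the uniformity of $1-Z$.
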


\begin{proof}
For active coordinates, the construction places \(U_i\) in the prescribed
tail interval and makes it uniform there because \(Z\) is uniform on
\([0,1]\). For inactive coordinates, \(U_i\) is uniform on \(M_{p_0}\) by
construction. Hence the support and one-dimensional marginal statements
follow immediately. The central construction is the special case in which all
coordinates remain in \(M_{p_0}\).
\end{proof}

\begin{remark}[Geometric interpretation of the canonical generators]
The canonical construction separates two types of directions. The active
coordinates \(i\in I\) are coupled through the single shared factor \(Z\),
which creates one common extremal dependence direction: on the active
coordinates, the support follows the ray joining the corresponding outer
vertex of the unit hypercube to the matching vertex of the central cell
\(M_{p_0}^d\). By contrast, each inactive coordinate \(i\notin I\) is driven
by its own variable \(V_i\) and therefore moves freely inside the middle
region \(M_{p_0}\). Thus the canonical support combines one joint tail
direction with \(d-|I|\) neutral middle directions. In particular, larger
active sets correspond to more concentrated geometric generators and to
higher-order extremal interaction.
\end{remark}

\begin{remark}
For the canonical choice, the support dimension is \(1+(d-|I|)\): in
dimension \(3\), generators with \(|I|=3\), \(|I|=2\), and \(|I|=1\) are
supported on line segments, plates, and rectangular boxes, respectively,
with the active part aligned along the corresponding outer-corner-to-central-vertex
ray. Figure~\ref{fig:gwc-27cells-png-before-refs} is drawn in this canonical geometry. In particular, many
witness generators are geometrically singular with respect to
full-dimensional Lebesgue measure on \([0,1]^d\). This is a feature of the
construction, not a defect: the witness language is meant to encode explicit
extremal mechanisms rather than smooth density models.
\end{remark}

\begingroup
\setlength{\intextsep}{0.2\baselineskip}
\setlength{\abovecaptionskip}{3pt}
\setlength{\belowcaptionskip}{0pt}
\begin{figure}[H]
\centering
\includegraphics[width=0.77\textwidth,clip]{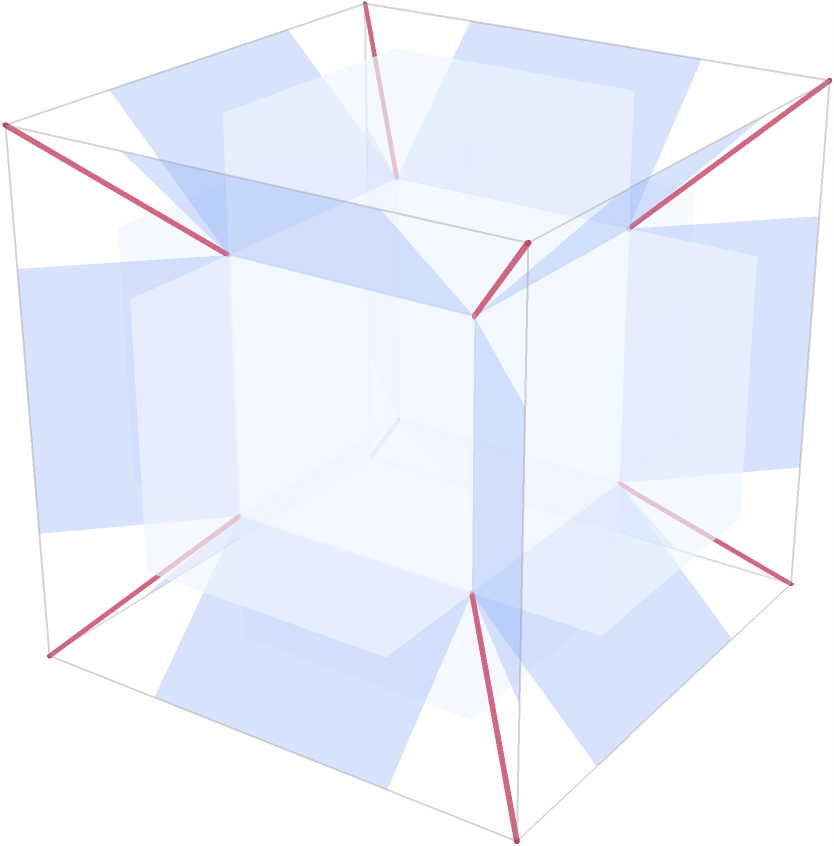}
\caption{A three-dimensional witness partition into the central cube and
\(3^3-1=26\) noncentral signed geometric components, shown in the canonical
\(d=3\) geometry associated with the shared-factor generator choice of
Proposition~\ref{prop:canonical-generators}. In this geometry, active-set
sizes \(|I|=3,2,1\) appear as line segments, plates, and rectangular boxes,
respectively. Rendered with Wolfram Mathematica 14.3.}
\label{fig:gwc-27cells-png-before-refs}
\end{figure}
\endgroup

The remark above describes the internal geometry of the canonical support,
while the following table below summarizes the cell-level geometry of witness generators
according to the active-set size $k=|I|$.
The volume column refers to the signed cells $B_{I,\sigma}^{(p_0)}$, not to
the lower-dimensional canonical supports inside them.
Here ``inward'' and ``outward'' refer to cell neighbors obtained by changing
one coordinate state by a single step in the ternary alphabet \(\LL/\MM/\UU\),
toward or away from the central cell \(M_{p_0}^d\).
Each row describes a single generator type with fixed active-set size
\(k=|I|\); in particular, the column ``affected coeffs.'' refers to one
individual generator \((I,\sigma)\), not to the whole layer of generators of
order \(k\).

\begin{table}[H]
\centering
\small
\setlength{\tabcolsep}{4pt}
\caption{Cell-level geometry of witness generators of order $k=|I|$.}
\label{tab:cell-geometry}
\begin{tabular}{c c c c c c c c}
\hline
$k$ &
\shortstack{count} &
\shortstack{support\\dim.} &
\shortstack{$M_{p_0}^d$ contact\\dim.} &
\shortstack{inward\\nbrs.} &
\shortstack{outward\\nbrs.} &
\shortstack{cell\\volume} &
\shortstack{affected\\coeffs.\\per generator} \\
\hline
$1$
& $\binom{d}{1}2$
& $d$
& $d-1$
& $1$
& $2(d-1)$
& $p_0(1-2p_0)^{d-1}$
& $1$ \\
$2$
& $\binom{d}{2}2^2$
& $d-1$
& $d-2$
& $2$
& $2(d-2)$
& $p_0^2(1-2p_0)^{d-2}$
& $3$ \\
$\vdots$
& $\vdots$
& $\vdots$
& $\vdots$
& $\vdots$
& $\vdots$
& $\vdots$
& $\vdots$ \\
$k$
& $\binom{d}{k}2^k$
& $d-k+1$
& $d-k$
& $k$
& $2(d-k)$
& $p_0^k(1-2p_0)^{d-k}$
& $2^k-1$ \\
$\vdots$
& $\vdots$
& $\vdots$
& $\vdots$
& $\vdots$
& $\vdots$
& $\vdots$
& $\vdots$ \\
$d$
& $2^d$
& $1$
& $0$
& $d$
& $0$
& $p_0^d$
& $2^d-1$ \\
\hline
total
& $3^d-1$
& --
& --
& --
& --
& $1-(1-2p_0)^d$
& -- \\
\hline
\end{tabular}
\end{table}

\vspace{1\baselineskip}

\subsection{Structural conventions}

The witness framework uses one common threshold \(p_0\in(0,1/2)\) across all
generators, and the in-cell geometry is the canonical ray geometry of
Proposition~\ref{prop:canonical-generators}. Every noncentral generator is
tail-scaled to total mass \(p_0\), which is exactly what makes the later
incidence matrix binary. Thus the generator-level degrees of freedom are the
witness weights, not alternative in-cell geometries.
The central cell \(B_0^{(p_0)}=M_{p_0}^d\) is treated as a residual
central component rather than as another generator: it carries no active set and
no sign pattern, it contributes to total probability and exact margins, but it
does not contribute to signed tail coefficients.

Singleton coefficients are nevertheless retained in the same witness
parametrization; for example, \(\lambda_{\{i\},\LL}\) and \(\lambda_{\{i\},\UU}\)
record the one-dimensional lower-tail and upper-tail normalization for
coordinate \(i\). They are not introduced as genuine multivariate extremal interaction
coefficients; instead, their role is to encode one-dimensional tail
normalization and hence the exact marginal constraints. This is also why generators with \(|I|=1\) remain part of the same witness
construction, even though they affect only singleton coefficients. In Table~\ref{tab:cell-geometry}, the row \(k=1\) therefore encodes only singleton normalization.

\begin{definition}[Witness weights]
\label{def:wweights}
A witness weight system is a family
\[
w=(w_{I,\sigma})_{(I,\sigma)\in\mathcal G_d},
\qquad
w_{I,\sigma}\ge 0.
\]
Its associated central coefficient is
\[
m_0 := 1-p_0\sum_{(I,\sigma)\in\mathcal G_d} w_{I,\sigma}.
\]
\end{definition}

\begin{remark}[Roles of the main objects]
A signed cell \(B^{(p_0)}_{I,\sigma}\) is a geometric region of the ternary
partition, its support is the region or lower-dimensional geometric set used by
the corresponding canonical generator, its volume is a Euclidean quantity, its mass is
probabilistic, and its witness weight \(w_{I,\sigma}\) is a tail-level
structural parameter. By contrast, the signed coefficients
\(\lambda_{J,\tau}\), introduced in Section~\ref{sec:tailmap}, are observable
quantities induced by the whole weighted superposition. Once \(p_0\) is fixed,
the ternary cell masses \(q_a\) belong to the realization level rather than to
the tail-level parametrization.
\end{remark}

\subsection{What this section does not yet do}

This section fixes the language and indexing. The next sections use only
signed cells, generator signatures, linear superposition, backward
inversion, and the finite-\(p_0\) passage to ternary cell masses. The formal copula
construction is deferred to Section~\ref{sec:formal-shell} so that the main
narrative can stay focused on the constructive line of the paper.
What matters later is that in this fixed canonical choice the active coordinates
move along a common ray. Once finite-threshold coefficients are introduced in
Section~\ref{sec:margins}, this geometry will lead directly to a fixed-scale invariance
property of the resulting signed tail family.

\section{Single-generator impact and the linear witness map}
\label{sec:tailmap}

\subsection{Signed tail coefficients as structural observables}

We now pass from signed cells to the induced coefficient system. Once the
generator indices are fixed, each signed coefficient is determined by which generators extend the prescribed signed subpattern and how their weights accumulate. This
observation underlies the incidence structure of the witness framework.
As in the Introduction, \(\lambda\) denotes the observable tail-level family
and \(w\) the witness weights. Once a geometric scale is
chosen in Section~\ref{sec:q-representation}, the corresponding finite-threshold
ternary cell masses will be denoted by \(q\).

\begin{definition}[Complete signed tail family]
\label{def:signed-tail-coefficient}
A complete signed tail family is a collection
\[
\lambda=(\lambda_{J,\tau})_{\emptyset\neq J\subseteq[d],\ \tau\in\{\LL,\UU\}^{J}}
\]
indexed by all nonempty active sets and all sign patterns on those sets.
At this stage, \(\lambda\) is simply the complete target family of signed tail coefficients,
one entry for each admissible pair \((J,\tau)\).
\end{definition}

\begin{remark}[Tail-level versus asymptotic language]
\label{rem:tail-level-vs-asymptotic}
These coefficients belong to the tail-level description of the witness
framework. Their finite-threshold interpretation in terms of ternary cell masses
and tail totals is developed in Section~\ref{sec:tail-event-combinatorics}, and
their asymptotic role is discussed in Section~\ref{sec:asymptotic}. This separation of tail-level, finite-threshold, and asymptotic viewpoints
matters when comparing the present framework with asymptotic copula theory
\citep{GudendorfSegers2010,DeLucaRivieccio2012}.
\end{remark}

Within the witness framework, these coefficients are realized as incidence
sums of witness weights: \(\lambda_{J,\tau}\) records the total weight of all
generators whose active set contains \(J\) and whose sign pattern restricts to
\(\tau\) on \(J\). Thus a single generator may contribute to several signed tail
coefficients. At the finite-threshold level developed in Section~\ref{sec:tail-event-combinatorics},
the same coefficients appear as normalized signed joint-tail event totals on
the specified coordinates, with the remaining coordinates left unrestricted.

The complete signed tail family is therefore indexed by the same type of
objects as the witness weights, namely by nonempty active sets together with
sign patterns on those active sets. In particular, the number of complete
signed tail coefficients is also \(3^d-1\).

\subsection{Contribution of a single generator to signed tail coefficients}

With the coefficient family fixed, the next step is to record the contribution of a single generator.

\begin{definition}[Signed tail signature]
\label{def:signed-tail-signature}
For each \((I,\sigma)\in\mathcal G_d\), define its signed tail signature across
all admissible targets \((J,\tau)\) by
\[
\eta^{(I,\sigma)}_{J,\tau}
:=
\mathbf{1}\{J\subseteq I,\ \sigma|_J=\tau\},
\qquad
\emptyset\neq J\subseteq[d],\ \tau\in\{\LL,\UU\}^J.
\]
\end{definition}

\begin{remark}
For a fixed generator \((I,\sigma)\), this incidence profile is binary: it
equals \(1\) exactly on those target patterns \((J,\tau)\) obtained by
restricting \((I,\sigma)\), and \(0\) otherwise.
\end{remark}

\paragraph{Additivity and weighted superposition.}
The single-generator signature already contains the whole mechanism: local
behavior is 0/1, and global behavior is linear. Once generators are combined
with nonnegative weights, every signed coefficient is simply the sum of the
weights of the generators that contain the requested signed subpattern. In
matrix form, one may therefore write \(\lambda=Aw\), where \(A\) is the
signed incidence matrix induced by this extension rule.

\begin{theorem}[Linear witness map]
\label{thm:w-to-lambda}
Within the witness class, the complete signed tail family is given by
\[
\lambda_{J,\tau}
=
\sum_{I\supseteq J}
\ \sum_{\sigma:\,\sigma|_J=\tau}
w_{I,\sigma},
\qquad
\emptyset\neq J\subseteq[d],\ \tau\in\{\LL,\UU\}^{J}.
\]
Equivalently, \(\lambda_{J,\tau}\) is obtained by summing the weights of
exactly those generators whose active pattern extends the requested signed
subpattern \((J,\tau)\). In matrix form, \(\lambda=Aw\), where the signed incidence matrix \(A\) has entries
\[
A_{(J,\tau),(I,\sigma)}=\mathbf 1\{J\subseteq I,\ \sigma|_J=\tau\}.
\]
In particular, the map \(w\mapsto \lambda\) is linear and independent of \(p_0\).
\end{theorem}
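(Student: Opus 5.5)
Within the witness class, a generator \((I,\sigma)\) contributes to a target coefficient \((J,\tau)\) exactly through its signed tail signature, Definition~\ref{def:signed-tail-signature}. The statement should therefore follow by superposition. I take the witness object to be the mixture that places mass \(p_0 w_{I,\sigma}\) on the canonical generator \(U^{(I,\sigma)}\) of Proposition~\ref{prop:canonical-generators} and residual mass \(m_0\) on \(U^{(0)}\). The coefficient \(\lambda_{J,\tau}\) is read as the normalized signed joint-tail total
\[
p_0^{-1}\Pr\bigl(U_j\in L_{p_0}\text{ if }\tau_j=\LL,\ U_j\in U_{p_0}\text{ if }\tau_j=\UU,\ j\in J\bigr),
\]
with the coordinates outside \(J\) left unrestricted.

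\textbf{Step 1 (single generator).} For a fixed generator I would compute the conditional probability of the \((J,\tau)\)-event under \(U^{(I,\sigma)}\). There are three cases.
\begin{itemize}
\item If some \(j\in J\) lies outside \(I\), then \(U_j\) lives in \(M_{p_0}\). This interval meets the tail intervals only at endpoints, so the event has probability \(0\).
\item If \(J\subseteq I\) but \(\sigma_j\neq\tau_j\) for some \(j\in J\), then \(U_j\) lies in the opposite tail, and the probability is again \(0\) up to null sets.
\item If \(J\subseteq I\) and \(\sigma|_J=\tau\), then every active coordinate lies in its prescribed tail almost surely, because it is \(p_0Z\) or \(1-p_0Z\). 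The event therefore has conditional probability \(1\).
\end{itemize}
So the conditional probability is exactly \(\eta^{(I,\sigma)}_{J,\tau}\). The central component contributes \(0\) for every nonempty \(J\), since all its coordinates lie in \(M_{p_0}\).

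\textbf{Step 2 (superposition).} The event probability is linear in the mixture weights. It therefore equals
\[
\sum_{(I,\sigma)} p_0 w_{I,\sigma}\,\eta^{(I,\sigma)}_{J,\tau} + m_0\cdot 0 .
\]
Dividing by \(p_0\) gives the displayed double sum, and the indicator form of \(\eta\) gives the matrix entries of \(A\).

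\textbf{Step 3 (\(p_0\)-independence).} The entries of \(A\) are combinatorial indicators that do not involve \(p_0\). Because each generator is tail-scaled to mass \(p_0\), the factor \(p_0\) cancels exactly. Linearity is then immediate.

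The main obstacle is interpretive rather than computational: at this point the paper has not yet formally defined how \(\lambda\) is read from a copula. I would fix the normalized tail-total reading above and note that Step 1 is the only place where the geometry enters. The canonical ray construction guarantees that membership in the tail intervals is almost sure, not merely partial, within matching cells. I would also check that the endpoint overlaps of the ternary partition are null for every component, including the singular line-segment supports. This holds because \(Z\) and the \(V_i\) are continuous, so each coordinate hits an endpoint with probability zero.
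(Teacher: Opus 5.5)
Your argument is correct and follows the same route as the paper: each generator contributes its signature $\eta^{(I,\sigma)}_{J,\tau}$ times $w_{I,\sigma}$, and weighted superposition then gives $\lambda=Aw$ with $p_0$ cancelling. The one difference is that the paper's proof takes the single-generator contribution as given by Definition~\ref{def:signed-tail-signature}, whereas you compute the $(J,\tau)$-event probability under the canonical generator at threshold $p_0$ explicitly; the paper carries out that computation only later, in Proposition~\ref{prop:tail-event-masses} and Proposition~\ref{prop:single-generator-fixed-scale}.
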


\begin{proof}
By weighted superposition, every generator \((I,\sigma)\) contributes its own
signature \(\eta^{(I,\sigma)}_{J,\tau}\) multiplied by the weight
\(w_{I,\sigma}\). Summing over all generators therefore gives
\[
\lambda_{J,\tau}
=
\sum_{(I,\sigma)\in\mathcal G_d}
w_{I,\sigma}\,\eta^{(I,\sigma)}_{J,\tau}.
\]

Using Definition~\ref{def:signed-tail-signature}, this becomes
\[
\lambda_{J,\tau}
=
\sum_{(I,\sigma)\in\mathcal G_d}
w_{I,\sigma}\,\mathbf{1}\{J\subseteq I,\ \sigma|_J=\tau\},
\]
so the sum keeps exactly those generators extending \((J,\tau)\) and
excludes all others. Hence
\[
\lambda_{J,\tau}
=
\sum_{I\supseteq J}
\ \sum_{\sigma:\,\sigma|_J=\tau}
w_{I,\sigma}.
\]
The right-hand side contains no occurrence of \(p_0\), so the map is
\(p_0\)-free.
\end{proof}

A concrete \(d=3\) illustration of this extension rule, together with the
corresponding finite-threshold cell totals, is shown later in
Figure~\ref{fig:incidence-d3}.

\paragraph{Incidence matrix in dimension \(d=2\).}
Order both the coefficient vector and the witness vector by
\[
(1,L),\ (1,U),\ (2,L),\ (2,U),\ (12,LL),\ (12,LU),\ (12,UL),\ (12,UU).
\]
In this ordering, the forward incidence relations read
\begin{align*}
\lambda_{12,LL} &= w_{12,LL}, &
\lambda_{12,LU} &= w_{12,LU}, \\
\lambda_{12,UL} &= w_{12,UL}, &
\lambda_{12,UU} &= w_{12,UU}, \\
\lambda_{1,L} &= w_{1,L}+w_{12,LL}+w_{12,LU}, \qquad &
\lambda_{1,U} &= w_{1,U}+w_{12,UL}+w_{12,UU}, \\
\lambda_{2,L} &= w_{2,L}+w_{12,LL}+w_{12,UL}, \qquad &
\lambda_{2,U} &= w_{2,U}+w_{12,LU}+w_{12,UU}.
\end{align*}
These equations are encoded by \(\lambda=A_2\,w\), with
\begingroup
\setlength{\arraycolsep}{4pt}
\renewcommand{\arraystretch}{0.8}
\normalsize\[
A_2 =
\left(
\setlength{\arraycolsep}{0pt}
\begin{array}{*{8}{w{c}{2em}}}
1 & 0 & 0 & 0 & 1 & 1 & 0 & 0 \\
0 & 1 & 0 & 0 & 0 & 0 & 1 & 1 \\
0 & 0 & 1 & 0 & 1 & 0 & 1 & 0 \\
0 & 0 & 0 & 1 & 0 & 1 & 0 & 1 \\
0 & 0 & 0 & 0 & 1 & 0 & 0 & 0 \\
0 & 0 & 0 & 0 & 0 & 1 & 0 & 0 \\
0 & 0 & 0 & 0 & 0 & 0 & 1 & 0 \\
0 & 0 & 0 & 0 & 0 & 0 & 0 & 1
\end{array}
\right).
\]
Thus \(A_2\) is upper triangular with unit diagonal, hence invertible, and
\begingroup
\setlength{\arraycolsep}{4pt}
\renewcommand{\arraystretch}{0.8}
\normalsize
\[
w = A_2^{-1}\lambda,
\qquad
A_2^{-1} =
\left(
\setlength{\arraycolsep}{0pt}
\begin{array}{*{8}{w{c}{2em}}}
1 & 0 & 0 & 0 & -1 & -1 & 0 & 0 \\
0 & 1 & 0 & 0 & 0 & 0 & -1 & -1 \\
0 & 0 & 1 & 0 & -1 & 0 & -1 & 0 \\
0 & 0 & 0 & 1 & 0 & -1 & 0 & -1 \\
0 & 0 & 0 & 0 & 1 & 0 & 0 & 0 \\
0 & 0 & 0 & 0 & 0 & 1 & 0 & 0 \\
0 & 0 & 0 & 0 & 0 & 0 & 1 & 0 \\
0 & 0 & 0 & 0 & 0 & 0 & 0 & 1
\end{array}
\right)
\]

Equivalently, the inverse recovery reads
\begin{align*}
w_{12,LL} &= \lambda_{12,LL}, &
w_{12,LU} &= \lambda_{12,LU}, \\
w_{12,UL} &= \lambda_{12,UL}, &
w_{12,UU} &= \lambda_{12,UU}, \\
w_{1,L} &= \lambda_{1,L}-\lambda_{12,LL}-\lambda_{12,LU}, \qquad &
w_{1,U} &= \lambda_{1,U}-\lambda_{12,UL}-\lambda_{12,UU}, \\
w_{2,L} &= \lambda_{2,L}-\lambda_{12,LL}-\lambda_{12,UL}, \qquad &
w_{2,U} &= \lambda_{2,U}-\lambda_{12,LU}-\lambda_{12,UU}.
\end{align*}
\endgroup
\endgroup

Thus the \(d=2\) case already exhibits both directions explicitly: the final four rows of \(A_2\),
corresponding to the order-\(2\) targets \((12,LL)\), \((12,LU)\), \((12,UL)\), and \((12,UU)\),
identify the pair generators directly, while the initial four singleton rows then collect what remains.
Equivalently, the displayed system shows both the forward incidence map
\(w\mapsto\lambda\) and the reverse recovery \(\lambda\mapsto w\) obtained by
triangular back-substitution. Larger explicit incidence tables,
including the complete signed \(d=3\) case and a pure upper-tail multivariate example
in dimension \(d=4\), are collected in Appendix~\ref{app:incidence-tables}.

\subsection{Triangular inversion in the complete case}
\label{subsec:inverse}

Having written the forward incidence map explicitly, we now read it in the reverse direction for complete signed families. The linear map in Theorem~\ref{thm:w-to-lambda} is triangular with respect to
the partial order given by set inclusion on active coordinate sets. This yields
an explicit backward recovery formula.

\begin{proposition}[Backward recovery of witness weights]
\label{prop:triangular-inversion}
Let \((\lambda_{J,\tau})\) be a complete signed tail family.
Then the witness weights are recovered recursively, by descending active-set
size \(|I|\) from \(d\) down to \(1\), via
\[
w_{I,\sigma}
=
\lambda_{I,\sigma}
-
\sum_{K\supsetneq I}
\ \sum_{\rho:\,\rho|_I=\sigma}
w_{K,\rho},
\qquad
(I,\sigma)\in\mathcal G_d.
\]
In particular, for a complete signed tail family, the witness weight system
\(w=(w_{I,\sigma})\) is uniquely determined within the witness framework.
\end{proposition}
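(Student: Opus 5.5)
The plan is to start from the linear witness map of Theorem~\ref{thm:w-to-lambda} and separate the diagonal term from the rest. In the incidence sum for \(\lambda_{I,\sigma}\), the pairs \((K,\rho)\) with \(K\supseteq I\) and \(\rho|_I=\sigma\) fall into two classes. If \(K=I\), the restriction condition forces \(\rho=\sigma\), which gives exactly the single term \(w_{I,\sigma}\). All remaining pairs have \(K\supsetneq I\). Hence
\[
\lambda_{I,\sigma}=w_{I,\sigma}+\sum_{K\supsetneq I}\ \sum_{\rho:\,\rho|_I=\sigma} w_{K,\rho},
\]
and rearranging gives the displayed recovery formula. In matrix terms, this says that \(A\) has unit diagonal, i.e.\ \(A_{(I,\sigma),(I,\sigma)}=1\). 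It also says that every other nonzero entry \(A_{(J,\tau),(I,\sigma)}\) requires \(J\subsetneq I\), so in particular \(|J|<|I|\).

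Next I will show that the recursion is well defined by downward induction on \(|I|\). For \(|I|=d\), the only candidate is \(I=[d]\). There is no strict superset, so the sum is empty and \(w_{[d],\sigma}=\lambda_{[d],\sigma}\) for all \(2^d\) patterns \(\sigma\). Now suppose every \(w_{K,\rho}\) with \(|K|>k\) has been determined by \(\lambda\). Take \(|I|=k\). Each term on the right-hand side of the recovery formula involves only \(\lambda_{I,\sigma}\) and weights \(w_{K,\rho}\) with \(|K|>k\). By the inductive hypothesis these are already fixed, so \(w_{I,\sigma}\) is determined. Continuing down to \(|I|=1\) exhausts \(\mathcal G_d\).

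Uniqueness then follows immediately. Any two witness weight systems \(w,w'\) with \(Aw=Aw'=\lambda\) satisfy the same recursion, and the induction shows they agree layer by layer. Equivalently, if we order indices by decreasing \(|I|\), then \(A\) is triangular with unit diagonal, so \(\det A=1\) and \(A\) is invertible, as the \(d=2\) display illustrates.

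I expect no real obstacle; the one point that needs care is the diagonal-isolation step. There one must check that \(K=I\) together with \(\rho|_I=\sigma\) leaves only \(\rho=\sigma\), and that no other term can share the same active set. Two further remarks belong with the statement. First, the recovered \(w\) is uniquely determined as a real vector, but it need not be nonnegative. Second, nonnegativity is exactly the compatibility condition treated later in the paper, so the proposition itself asserts only algebraic uniqueness.
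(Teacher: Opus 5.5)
Your proposal is correct and matches the paper's proof: you specialize the incidence formula of Theorem~\ref{thm:w-to-lambda} to $(J,\tau)=(I,\sigma)$, isolate the diagonal term $K=I$ (where $\rho|_I=\sigma$ forces $\rho=\sigma$), rearrange, and get uniqueness by descending induction on $|I|$. Your extra remarks, that $A$ is unit-triangular and that the recovered weights can be negative, are accurate and agree with how the paper later uses the result in Corollary~\ref{cor:complete-tail-validity}.
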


\begin{proof}
Fix \((I,\sigma)\in G_d\). Applying Theorem~\ref{thm:w-to-lambda} with \((J,\tau)=(I,\sigma)\) gives
\[
\lambda_{I,\sigma}
=
\sum_{K\supseteq I}
\ \sum_{\rho:\,\rho|_I=\sigma}
w_{K,\rho}.
\]
The term \(K=I\) contributes exactly \(w_{I,\sigma}\), so
\[
\lambda_{I,\sigma}
=
w_{I,\sigma}
+
\sum_{K\supsetneq I}
\ \sum_{\rho:\,\rho|_I=\sigma}
w_{K,\rho}.
\]
Rearranging yields the stated formula. Since the right-hand side involves only
weights indexed by strict active-set supersets \(K\supsetneq I\), the recovery
proceeds uniquely by descending induction on \(|I|\), from \(d\) down to \(1\).
\end{proof}

The underlying combinatorial structure of this signed incidence system is
described in Appendix~\ref{app:poset-moebius}.

\begin{proposition}[Unsigned upper-tail restriction]
\label{prop:unsigned-upper-tail-restriction}
Consider the witness framework restricted to generators $(I,\sigma)\in G_d$
whose sign pattern is identically upper, that is, $\sigma\equiv \UU$ on $I$.
Then the signed active-set/sign-pattern incidence system collapses exactly to
the Boolean-lattice incidence system for complete unsigned upper-tail tail
families.

More precisely, under this restriction, each generator is identified uniquely
with its active set $I\subseteq[d]$, $I\neq\emptyset$, and for every nonempty
$J\subseteq[d]$ the complete upper-tail coefficient satisfies
\[
\lambda_J^{U}=\sum_{I\supseteq J} w_I^{U}.
\]
Hence the complete-case backward recovery reduces to the standard binary
M\"obius/back-substitution formula
\[
w_I^{U}
=
\sum_{K\supseteq I}(-1)^{|K|-|I|}\lambda_K^{U},
\qquad \emptyset\neq I\subseteq[d].
\]
In this sense, the present witness algebra contains the complete unsigned
upper-tail multivariate regime as a direct restriction.
\end{proposition}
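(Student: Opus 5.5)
The plan is to specialize Theorem~\ref{thm:w-to-lambda} to the all-upper sub-family and then invert the resulting Boolean zeta system explicitly.

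\textbf{Specialization and identification.} For each nonempty $I\subseteq[d]$ there is exactly one pattern $\sigma\in\{\LL,\UU\}^I$ with $\sigma\equiv\UU$. The map $I\mapsto(I,\UU^I)$ is therefore a bijection from nonempty subsets onto the restricted generator set, and we write $w_I^{U}:=w_{I,\UU^I}$. All other weights are set to zero. Similarly, $\lambda_J^{U}:=\lambda_{J,\UU^J}$.

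\textbf{Forward incidence.} Apply Theorem~\ref{thm:w-to-lambda} at $(J,\tau)=(J,\UU^J)$. The inner condition $\sigma|_J=\UU^J$ holds for every surviving generator $(I,\UU^I)$ with $I\supseteq J$, and all other generators have zero weight. This gives $\lambda_J^U=\sum_{I\supseteq J}w_I^U$. The incidence matrix restricted to these rows and columns is $\mathbf 1\{J\subseteq I\}$, which is the zeta matrix of the Boolean lattice of nonempty subsets of $[d]$ under inclusion. This is the ``exact collapse'' asserted in the statement.

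\textbf{Inversion.} I would verify the claimed formula directly rather than rely on abstract M\"obius theory. Substitute the forward relation into the right-hand side and exchange the order of summation:
\[
\sum_{K\supseteq I}(-1)^{|K|-|I|}\lambda_K^U=\sum_{M\supseteq I}w_M^U\sum_{I\subseteq K\subseteq M}(-1)^{|K|-|I|}.
\]
The inner sum equals $(1-1)^{|M\setminus I|}$, which is $1$ if $M=I$ and $0$ otherwise. Hence the whole expression equals $w_I^U$.

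To confirm consistency with the paper's own recovery, I would note that Proposition~\ref{prop:triangular-inversion} restricted to the all-upper family reads $w_I^U=\lambda_I^U-\sum_{K\supsetneq I}w_K^U$. This back-substitution has a unique solution, so it must coincide with the closed form above.

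\textbf{Expected obstacle.} The main subtlety is interpretive: making sure that ``restriction'' is read as taking the all-upper sub-block of the incidence system. Under that reading, the upper-tail coefficients involve only all-upper generators, because $\sigma|_J=\UU^J$ does not force $\sigma\equiv\UU$ on $I$ in general. In the full signed system, mixed generators also contribute to $\lambda_J^U$. So the statement must be understood as assuming the mixed and lower weights vanish. I would state this explicitly at the start of the proof.

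The alternating-sum identity is routine once the summation order is swapped.
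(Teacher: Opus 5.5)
Your proposal is correct and follows essentially the same route as the paper: you identify the all-upper generators with nonempty subsets, specialize the incidence formula of Theorem~\ref{thm:w-to-lambda} to obtain the Boolean zeta system, and invert it. The only differences are refinements: you verify the M\"obius formula directly via the binomial identity $(1-1)^{|M\setminus I|}$ instead of citing the Boolean-lattice M\"obius function, and you state explicitly what the paper leaves implicit, namely that the mixed-sign weights must vanish for $\lambda_{J,\UU^J}$ to reduce to the all-upper sum.
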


\begin{proof}
Under the restriction \(\sigma\equiv \UU\), the generator index \((I,\sigma)\)
is completely determined by the active set \(I\), so the signed index family reduces to the family of
nonempty subsets of $[d]$. For a complete upper-tail event indexed by
$J\subseteq[d]$, a generator contributes if and only if all coordinates in $J$
are active and carry the upper sign, which under the restriction $\sigma\equiv \UU$
is equivalent to $J\subseteq I$. Therefore
\[
\lambda_J^{U}=\sum_{I\supseteq J} w_I^{U}.
\]
This is precisely the zeta-transform on the Boolean lattice of nonempty
subsets. Its inverse is the usual M\"obius inversion on that lattice, which
gives
\[
w_I^{U}
=
\sum_{K\supseteq I}(-1)^{|K|-|I|}\lambda_K^{U}.
\]
\end{proof}

\begin{remark}
Proposition~\ref{prop:unsigned-upper-tail-restriction} is more than a formal
analogy: in the complete upper-tail multivariate regime it is exactly the
Boolean-lattice zeta/M\"obius system.
\end{remark}

For \(d=3\), Figure~\ref{fig:hasse} in Appendix~\ref{app:hasse}
visualizes the signed active-set partial order behind the incidence map
\(\lambda=Aw\) and the direct inversion procedure: recovery proceeds from the
top layer \(|J|=3\) down to singleton indices.

\paragraph{Singleton coordinates.}
The singleton rows of the incidence map should be read as
normalization rows. They belong to the same coordinate system as the higher-order
signed coefficients, but their main role is to encode the one-dimensional
lower- and upper-tail normalization constraints that later become exact marginal
conditions.

\begin{corollary}[Complete-case validity of the tail-level witness parameters]
\label{cor:complete-tail-validity}
Let \((\lambda_{J,\tau})\) be a complete signed tail family, and let \(w\) be the
weights recovered by Proposition~\ref{prop:triangular-inversion}. Then
\((\lambda_{J,\tau})\) determines a valid tail-level witness parametrization if
and only if
\[
w_{I,\sigma}\ge 0
\qquad\text{for all }(I,\sigma)\in\mathcal G_d,
\]
together with the singleton normalization
\[
\lambda_{\{i\},\LL}=1,
\qquad
\lambda_{\{i\},\UU}=1,
\qquad i=1,\dots,d.
\]
\end{corollary}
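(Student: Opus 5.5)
The plan is to make precise what ``valid tail-level witness parametrization'' means and then reduce both directions to Theorem~\ref{thm:w-to-lambda} and Proposition~\ref{prop:triangular-inversion}. I take a valid parametrization to be a witness weight system in the sense of Definition~\ref{def:wweights}, that is, \(w\ge0\) entrywise, such that \(\lambda=Aw\). In addition, the singleton rows must encode the exact one-dimensional tail normalization required by the structural conventions: each coordinate puts normalized mass exactly \(1\) in its lower tail and in its upper tail. By Proposition~\ref{prop:triangular-inversion}, \(A\) is unitriangular with respect to descending \(|I|\). Hence the recovered \(w=A^{-1}\lambda\) is the \emph{only} candidate, and validity becomes a property of this one vector.

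For necessity, suppose a valid parametrization \(w'\) exists. Then \(Aw'=\lambda=Aw\), and invertibility of \(A\) gives \(w'=w\), so \(w\ge0\). The singleton conditions come from the singleton rows. Under the canonical generators of Proposition~\ref{prop:canonical-generators}, coordinate \(i\) lies in \(L_{p_0}\) under generator \((I,\sigma)\) exactly when \(i\in I\) and \(\sigma_i=\LL\). Each noncentral generator carries mass \(p_0 w_{I,\sigma}\), and the central component never touches the tails. So the lower-tail mass of coordinate \(i\) is \(p_0\sum_{I\ni i,\,\sigma_i=\LL}w_{I,\sigma}=p_0\lambda_{\{i\},\LL}\), again by Theorem~\ref{thm:w-to-lambda}. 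Uniformity of the \(i\)-th margin forces this mass to equal \(p_0\), hence \(\lambda_{\{i\},\LL}=1\). The upper tail gives \(\lambda_{\{i\},\UU}=1\) in the same way.

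For sufficiency, assume \(w\ge0\) and the singleton normalization. Then \(w\) is a witness weight system with \(Aw=\lambda\) by construction. The computation above, read backwards, shows that every coordinate receives exactly tail mass \(p_0\) in \(L_{p_0}\) and in \(U_{p_0}\). Within each tail the mass is uniform, because every active coordinate of a canonical generator is uniform on its tail interval. This is exactly what the singleton normalization rows are meant to encode, so the parametrization is valid at the tail level.

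The main obstacle is interpretive rather than technical. The corollary deliberately stops short of the finite-\(p_0\) condition \(m_0\ge0\) and exact uniformity on \(M_{p_0}\), which belong to the later realization layer. I therefore have to state explicitly that ``valid tail-level'' includes nonnegativity and singleton normalization but excludes the residual central-mass constraint. I will remark that this residual constraint is deferred to the admissibility analysis in terms of \(S(w)=\sum w\).
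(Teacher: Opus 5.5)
Your proof is correct and follows essentially the same route as the paper. Uniqueness of the recovered $w$ under the triangular incidence map forces nonnegativity, the singleton rows are read as the exact one-dimensional tail normalization, and the converse is definitional once the central-mass constraint is deferred to the finite-$p_0$ layer. The only minor difference is how you justify that $\lambda_{\{i\},\LL}=\lambda_{\{i\},\UU}=1$ is necessary: you use the tail masses of the witness mixture itself (the content of Proposition~\ref{prop:exact-margins}), whereas the paper uses the model-free identity $\lambda^{(p)}_{\{i\},\LL}=p^{-1}\Pr(U_i\le p)=1$, which holds for any copula at any threshold and so ties the condition to arbitrary realizing copulas rather than only to the witness class.
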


\begin{proof}
The recovered weights are uniquely determined by Proposition~\ref{prop:triangular-inversion}. Nonnegativity is
necessary for any valid tail-level witness parametrization. The singleton
normalization is necessary because the singleton rows of the incidence map are
exactly the one-dimensional tail-normalization conditions.
Indeed, for any copula and any finite threshold \(p\in(0,1/2)\), one has
\(
\lambda^{(p)}_{\{i\},L}
=
p^{-1}\Pr(U_i\in[0,p])=1\) and 
\(
\lambda^{(p)}_{\{i\},U}
=
p^{-1}\Pr(U_i\in[1-p,1])=1
\)
for every \(i\in[d]\). Hence any asymptotically realizable \(p_0\)-free complete
signed tail family must satisfy
\(
\lambda_{\{i\},L}=\lambda_{\{i\},U}=1\) for every \(i\in[d]\).
Conversely, if the recovered weights are nonnegative and the singleton
normalization holds, then the witness parameters are valid at the tail level.
\end{proof}

\begin{corollary}[Complete-case finite-\(p_0\) realizability]
\label{cor:complete-compatibility}
For a fixed scale \(p_0\in(0,1/2)\), a complete signed tail family
\((\lambda_{J,\tau})\) admits a finite-\(p_0\) geometric witness realization if
and only if the recovered weights satisfy the conditions of
Corollary~\ref{cor:complete-tail-validity} and, in addition,
\(
p_0\sum_{(I,\sigma)\in\mathcal G_d} w_{I,\sigma}\le 1.
\)
\end{corollary}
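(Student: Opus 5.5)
The plan is to prove the two directions separately. Throughout, a finite-\(p_0\) geometric witness realization means the mixture law in which each canonical generator \(U^{(I,\sigma)}\) of Proposition~\ref{prop:canonical-generators} receives probability \(p_0 w_{I,\sigma}\) and the central component \(U^{(0)}\) receives the residual mass \(m_0=1-p_0\sum w_{I,\sigma}\) of Definition~\ref{def:wweights}.

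\emph{Necessity.} Suppose such a realization exists for the given \(\lambda\). By Proposition~\ref{prop:triangular-inversion} the weights entering it must be the uniquely recovered ones. They are mixture weights, so they are nonnegative. Likewise the residual central weight must be a probability, so \(m_0\ge 0\), which is exactly \(p_0\sum w_{I,\sigma}\le 1\). The singleton normalization is inherited from Corollary~\ref{cor:complete-tail-validity}. More concretely, a realization must have uniform margins, and the mass of \(U_i\in L_{p_0}\) equals
\[
p_0\sum_{I\ni i}\ \sum_{\sigma_i=\LL} w_{I,\sigma}=p_0\,\lambda_{\{i\},\LL}
\]
by Theorem~\ref{thm:w-to-lambda}. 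Exact margins therefore force \(\lambda_{\{i\},\LL}=1\), and symmetrically \(\lambda_{\{i\},\UU}=1\).

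\emph{Sufficiency.} Given nonnegative recovered weights, singleton normalization, and \(p_0\sum w\le 1\), define the mixture above. All mixture weights are nonnegative and sum to \(1\), so it is a probability law on \([0,1]^d\). The core step is to verify that every one-dimensional margin is \(\mathrm{Unif}[0,1]\). For coordinate \(i\), split the mixture components by the state of coordinate \(i\).

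\begin{itemize}
\item Components with \(i\in I\) and \(\sigma_i=\LL\) contribute \(p_0\lambda_{\{i\},\LL}=p_0\) in total. Each such component is uniform on \(L_{p_0}\), so together they contribute Lebesgue measure on \(L_{p_0}\).
\item Components with \(i\in I\) and \(\sigma_i=\UU\) give Lebesgue measure on \(U_{p_0}\) in the same way.
\item All remaining components (\(i\notin I\), plus the central one) are uniform on \(M_{p_0}\) for coordinate \(i\). Their total mass is \(1-2p_0\), which is exactly the length of \(M_{p_0}\), so they contribute Lebesgue measure on \(M_{p_0}\).
\end{itemize}

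Summing the three pieces shows coordinate \(i\) is uniform on \([0,1]\), so the law is a copula. The ternary cell masses are \(q_a=p_0w_{I(a),\sigma(a)}\), and Theorem~\ref{thm:w-to-lambda} then shows the normalized signed joint-tail totals at scale \(p_0\) reproduce \(\lambda\): each generator lies entirely in the signed tail event \((J,\tau)\) exactly when \(J\subseteq I\) and \(\sigma|_J=\tau\).

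I expect the main obstacle to be the middle-region bookkeeping, namely showing the residual mass on \(M_{p_0}\) is exactly \(1-2p_0\). This is where singleton normalization is used. The only other delicate point is pinning down what ``realization'' means, since the paper defers the formal copula shell and the finite-threshold tail totals to later sections. I would state the fixed-\(p_0\) readout as \(p_0^{-1}\) times the mass of the signed tail event and check it against the incidence formula directly.
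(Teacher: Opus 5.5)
Your proposal is correct and uses the same decomposition as the paper: tail-level validity (nonnegative recovered weights and singleton normalization, as in Corollary~\ref{cor:complete-tail-validity}) together with nonnegativity of the residual central mass \(m_0=1-p_0\sum w_{I,\sigma}\). The only difference is level of detail. The paper's proof simply cites Corollary~\ref{cor:complete-tail-validity} and adds \(m_0\ge 0\), whereas you inline the uniform-margin check and the scale-\(p_0\) tail readout that the paper proves separately in Propositions~\ref{prop:tail-event-masses}, \ref{prop:exact-margins} and~\ref{prop:formal-witness-copula}, so your sufficiency direction is more self-contained.
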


\begin{proof}
By Corollary~\ref{cor:complete-tail-validity}, the recovered weights define a valid tail-level witness
parametrization if and only if they are nonnegative and the singleton
normalization holds. For a fixed scale \(p_0\), a finite-\(p_0\) witness
realization requires in addition that the residual central coefficient
$m_0=1-p_0\sum_{(I,\sigma)\in G_d} w_{I,\sigma}$
be nonnegative, equivalently, that the central cell mass be nonnegative. This is equivalent to
$p_0\sum_{(I,\sigma)\in G_d} w_{I,\sigma}\le 1.$
Hence the stated condition is exactly the additional finite-\(p_0\)
admissibility requirement beyond Corollary~\ref{cor:complete-tail-validity}.
\end{proof}

Thus, in the complete case, the signed tail family \(\lambda\) determines the
witness weight system \(w\) uniquely by triangular back-substitution. The next
step is to pass from witness weights to finite-\(p_0\) ternary
cell masses and exact margin conditions.

\section{Finite-\(p_0\) realization and exact margins}
\label{sec:margins}

Section~\ref{sec:tailmap} related the tail-level witness weights \(w\) to the tail coefficients
\(\lambda\). We now choose a geometric scale \(p_0\) and pass from the generator
weights to ternary cell masses \(q\) on the grid
\(
\{\LL,\MM,\UU\}^d.
\)
This separates three issues cleanly: noncentral masses are obtained from the
finite-threshold rescaling \(q_a=p_0 w_{I(a),\sigma(a)}\), the central cell
absorbs the residual probability, and exact margins are enforced by singleton
normalization.

\subsection{Ternary cells and their active patterns}\label{sec:ternary-states}

Each ternary state
\(
a=(a_1,\dots,a_d)\in\{\LL,\MM,\UU\}^d
\)
specifies, coordinatewise, whether the corresponding component lies in the
lower-tail region, in the middle region, or in the upper-tail region.

\begin{definition}[Active set and sign pattern of a ternary cell]
Let \(a\in\{\LL,\MM,\UU\}^d\). Its active set is
\[
I(a):=\{j\in[d]: a_j\neq \MM\}.
\]
If \(I(a)\neq\emptyset\), its induced sign pattern is
\[
\sigma(a):=a|_{I(a)} \in \{\LL,\UU\}^{I(a)}.
\]
\end{definition}

\begin{remark}
The central cell \(\MM^d\) is the unique ternary state with empty active set.
Every noncentral ternary cell is uniquely identified by a pair
\(
(I,\sigma)\in\mathcal G_d,
\)
namely by its active coordinates and their signs.
\end{remark}

\begin{corollary}[Counting identity]
The number of noncentral ternary cells coincides with the number of primitive
witness generators:
\[
|\{\LL,\MM,\UU\}^d\setminus\{\MM^d\}|
=
3^d-1
=
|\mathcal G_d|.
\]
\end{corollary}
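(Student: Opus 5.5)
The counting identity follows from exhibiting a bijection between the noncentral ternary cells and the generator index set \(\mathcal G_d\), together with a direct count of each side. First, I define the map
\[
\Phi:\{\LL,\MM,\UU\}^d\setminus\{\MM^d\}\to\mathcal G_d,\qquad a\mapsto (I(a),\sigma(a)),
\]
using the active set and induced sign pattern from the preceding definition. Well-definedness is immediate: \(a\neq\MM^d\) gives \(I(a)\neq\emptyset\), and \(\sigma(a)=a|_{I(a)}\) takes values in \(\{\LL,\UU\}\) because every coordinate in \(I(a)\) differs from \(\MM\).

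Next, I write down the inverse. Given \((I,\sigma)\in\mathcal G_d\), set \(a_j=\sigma_j\) for \(j\in I\) and \(a_j=\MM\) for \(j\notin I\). This \(a\) is noncentral since \(I\neq\emptyset\). The checks \(I(a)=I\) and \(\sigma(a)=\sigma\) hold by construction, and conversely a noncentral state is recovered from its pair \((I(a),\sigma(a))\). This is the canonical correspondence already noted in the remark after the definition of \(\mathcal G_d\) and in the remark following the definition of active patterns, so \(\Phi\) is a bijection.

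Finally, I count the two sides independently as a consistency check. The full ternary alphabet has \(3^d\) states, and removing the single central state \(\MM^d\) leaves \(3^d-1\). On the generator side, the earlier remark gives
\[
|\mathcal G_d|=\sum_{k=1}^d\binom{d}{k}2^k=(1+2)^d-1=3^d-1
\]
by the binomial theorem, which agrees.

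None of these steps poses a real obstacle. The only point requiring care is that the central state is excluded on exactly one side, matching the exclusion \(I\neq\emptyset\) in \(\mathcal G_d\).
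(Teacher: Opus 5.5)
Your proposal is correct, and your final paragraph is exactly the paper's proof: the left side is $3^d-1$ by direct count, and $|\mathcal G_d|=\sum_{k=1}^d\binom{d}{k}2^k=3^d-1$ was already computed in Section~\ref{sec:construction}. Your explicit bijection $a\mapsto(I(a),\sigma(a))$ is also a valid, self-contained proof of the equality. The paper records that canonical correspondence in its surrounding remarks rather than in this proof, so either half of your argument alone would suffice.
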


\begin{proof}
The left-hand side equals \(3^d-1\) by direct counting.
The right-hand side was computed in Section~\ref{sec:construction}.
\end{proof}

\subsection{From witness weights to ternary cell masses}\label{sec:q-representation}

With the cell--generator correspondence in hand, we now attach finite-threshold masses to those cells. Fix \(p_0\in(0,1/2)\). The witness construction induces a finite realization on
the ternary partition of \([0,1]^d\), with one central cell and \(3^d-1\)
noncentral cells.

\begin{proposition}[Witness weights and ternary cell masses]
\label{prop:w-to-q}
Let \(w=(w_{I,\sigma})_{(I,\sigma)\in\mathcal G_d}\) be a witness weight system.
Then the induced ternary cell masses are given by
\[
q_a = p_0\, w_{I(a),\sigma(a)},
\qquad
a\in\{\LL,\MM,\UU\}^d\setminus\{\MM^d\},
\]
and
\[
q_{\MM^d}
=
1-p_0\sum_{(I,\sigma)\in\mathcal G_d} w_{I,\sigma}.
\]
\end{proposition}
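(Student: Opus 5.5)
The plan is to make explicit the witness mixture that Proposition~\ref{prop:w-to-q} implicitly refers to, and then compute the mass of each ternary cell under it. Given the weight system \(w\) and scale \(p_0\), the witness law is the mixture
\[
\mu := \sum_{(I,\sigma)\in\mathcal G_d} p_0 w_{I,\sigma}\,\mathcal L\bigl(U^{(I,\sigma)}\bigr) + m_0\,\mathcal L\bigl(U^{(0)}\bigr),
\qquad m_0=1-p_0\sum_{(I,\sigma)} w_{I,\sigma}.
\]
Here the components are the canonical generators and the central component of Proposition~\ref{prop:canonical-generators}. Each noncentral generator is tail-scaled to total mass \(p_0w_{I,\sigma}\), following the structural conventions, and the central component carries the residual coefficient \(m_0\) of Definition~\ref{def:wweights}. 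The coefficients sum to one by construction, so \(\mu\) is a probability measure whenever \(m_0\ge 0\). The identity can also be read formally as a statement about total mass for any \(w\).

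The key step is to evaluate \(\mu(B_a)\) for each ternary cell \(B_a=\prod_i A_{a_i}\). By Proposition~\ref{prop:canonical-generators}, \(U^{(I,\sigma)}\) is supported on \(B^{(p_0)}_{I,\sigma}\) and \(U^{(0)}\) is supported on \(B_0^{(p_0)}=M_{p_0}^d\).

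For a noncentral cell \(a\), the remark following the definition of the active set and sign pattern identifies \(B_a\) with \(B^{(p_0)}_{I(a),\sigma(a)}\). Any other generator \((I,\sigma)\neq(I(a),\sigma(a))\) has a cell that differs from \(B_a\) in at least one coordinate. In that coordinate the two intervals are distinct members of \(\{L_{p_0},M_{p_0},U_{p_0}\}\), so they meet only at an endpoint \(p_0\) or \(1-p_0\). The same argument applies to the central cell, since \(a\neq\MM^d\).

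Hence \(\mu(B_a)=p_0w_{I(a),\sigma(a)}\), which gives the first formula. Likewise \(q_{\MM^d}=\mu(B_0^{(p_0)})=m_0\), since every noncentral generator is separated from the central cell by some active coordinate. This is exactly the second formula.

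The only delicate point, which I expect to be the main obstacle, is the null-set issue: the cells overlap on boundary hyperplanes \(\{u_i=p_0\}\) or \(\{u_i=1-p_0\}\). I would handle it by showing that each component law gives zero mass to these hyperplanes. For an active coordinate, \(U_i=p_0Z\) or \(U_i=1-p_0Z\) with \(Z\) uniform, so \(\Pr(U_i\in\{p_0,1-p_0\})=\Pr(Z=1)=0\). For an inactive or central coordinate, \(U_i=p_0+(1-2p_0)V_i\) hits an endpoint only when \(V_i\in\{0,1\}\), which has probability zero. Therefore each component assigns full mass to the interior of its own cell and zero mass to every other cell, so the ternary masses \(q_a\) are well defined independently of how the boundaries are assigned. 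With this in place, the rest is bookkeeping.
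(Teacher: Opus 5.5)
Your proposal is correct and follows the same route as the paper: it uses the canonical bijection between noncentral cells and generators, with each tail-scaled generator of mass \(p_0w_{I,\sigma}\) concentrated on its own cell and the central component carrying the residual. It is more careful than the paper's ``by construction'' argument, since you write out the mixture explicitly, check that the canonical components give zero mass to the shared boundary hyperplanes \(\{u_i=p_0\}\) and \(\{u_i=1-p_0\}\), and obtain \(q_{\MM^d}=m_0\) directly rather than as \(1-\sum_{a\neq\MM^d}q_a\).
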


\begin{proof}

Each noncentral generator \((I,\sigma)\) is associated with exactly one
noncentral ternary cell, namely the cell whose active set is \(I\) and whose
sign pattern on \(I\) is \(\sigma\). By construction, the geometric mass assigned to that cell is
\(p_0 w_{I,\sigma}\). Since every noncentral ternary cell corresponds to a
unique pair \((I,\sigma)\), this gives
\(
q_a = p_0\, w_{I(a),\sigma(a)}
\)
for every noncentral cell \(a\).

The remaining probability is assigned to the central cell. Hence
\[
q_{\MM^d}
=
1-\sum_{a\neq \MM^d} q_a
=
1-p_0\sum_{(I,\sigma)\in\mathcal G_d} w_{I,\sigma}.
\]
\end{proof}

\begin{corollary}
\label{cor:witness-induces-finite-p0}
The witness formulation induces a finite-\(p_0\) realization on the ternary grid,
while the tail-level parametrization \(w\mapsto\lambda\) remains \(p_0\)-free.
\end{corollary}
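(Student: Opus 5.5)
The corollary follows by combining Proposition~\ref{prop:w-to-q} with Theorem~\ref{thm:w-to-lambda}, so the plan has two parts. First we exhibit the finite-\(p_0\) realization on the ternary grid and check that it is a genuine probability vector. Then we observe that \(p_0\) never enters the tail-level map.

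\textbf{Step 1: the ternary masses are a probability vector.} Fix \(p_0\in(0,1/2)\) and a witness weight system \(w\) with \(w_{I,\sigma}\ge0\) (Definition~\ref{def:wweights}). By Proposition~\ref{prop:w-to-q}, each noncentral cell \(a\) receives mass \(q_a=p_0w_{I(a),\sigma(a)}\), and the central cell receives \(q_{\MM^d}=m_0\).
\begin{itemize}
\item Noncentral masses are nonnegative because \(p_0>0\) and \(w\ge0\).
\item The total mass is \(1\) by construction of \(m_0\).
\item The one remaining requirement is \(m_0\ge0\). This is exactly the admissibility condition \(p_0\sum w\le1\) of Corollary~\ref{cor:complete-compatibility}. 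I would read the statement under this standing assumption; without it, the ternary vector is only a signed mass assignment.
\end{itemize}

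\textbf{Step 2: the ternary masses come from a measure.} Mixing the canonical generators of Proposition~\ref{prop:canonical-generators} with weights \(q_a\), together with the central component \(U^{(0)}\) with weight \(m_0\), gives a probability measure. Each generator is supported in its own cell \(B^{(p_0)}_{I,\sigma}\), and the cells partition \([0,1]^d\) modulo null sets. Hence the cell mass of this mixture is exactly \(q_a\), so the mixture realizes the ternary vector of Step~1. Exact uniform margins are a separate matter: they follow from singleton normalization and are deferred to the marginal analysis and to Section~\ref{sec:formal-shell}. The corollary only asserts that a finite-\(p_0\) ternary realization is induced, so I would not prove margins here.

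\textbf{Step 3: the tail-level map is \(p_0\)-free.} By Theorem~\ref{thm:w-to-lambda}, \(\lambda=Aw\), where \(A\) has \(0/1\) entries defined purely by the incidence relation \(J\subseteq I,\ \sigma|_J=\tau\). So neither \(A\) nor \(w\) depends on \(p_0\), and the geometric scale enters only through Proposition~\ref{prop:w-to-q}.

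The main obstacle is interpretive rather than technical: the statement must be read either under the standing condition \(m_0\ge0\) or as a statement about the induced linear ternary vector. I would state the condition \(m_0\ge0\) explicitly.
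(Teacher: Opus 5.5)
Your proposal is correct and follows the paper's own two-line proof, which cites Proposition~\ref{prop:w-to-q} for the induced ternary masses and Theorem~\ref{thm:w-to-lambda} for \(p_0\)-freeness; your extra Steps~1--2 (the standing condition \(m_0\ge 0\) and the explicit canonical mixture) are sound refinements of material the paper defers to Corollary~\ref{cor:admissibility-q} and Section~\ref{sec:formal-shell}. One small precision for Step~2: the canonical mixture is singular with respect to Lebesgue measure, so Lebesgue-nullness of the cell overlaps is not enough; they are null for the mixture because every component has continuous one-dimensional marginals, hence the hyperplanes \(\{u_i=p_0\}\) and \(\{u_i=1-p_0\}\) carry no mass.
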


\begin{proof}
The finite-\(p_0\) realization follows from Proposition~\ref{prop:w-to-q},
whereas the \(p_0\)-freeness of \(w\mapsto\lambda\) was established in
Theorem~\ref{thm:w-to-lambda}.
\end{proof}

\begin{corollary}[Inverse rescaling on noncentral cells]
\label{cor:q-to-w}
For every noncentral ternary cell \(a\in\{\LL,\MM,\UU\}^d\setminus\{\MM^d\}\),
\[
w_{I(a),\sigma(a)}=\frac{q_a}{p_0}.
\]
Thus, once \(p_0\) is fixed, the noncentral witness weights and noncentral
ternary cell masses determine each other bijectively.
\end{corollary}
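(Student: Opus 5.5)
The statement to prove is Corollary~\ref{cor:q-to-w}. It follows from Proposition~\ref{prop:w-to-q} by dividing by the fixed positive scale \(p_0\), together with the cell--generator correspondence from Section~\ref{sec:construction}.

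First, fix \(p_0\in(0,1/2)\) and a noncentral cell \(a\). Proposition~\ref{prop:w-to-q} gives \(q_a=p_0\,w_{I(a),\sigma(a)}\). Since \(p_0>0\), dividing by \(p_0\) yields \(w_{I(a),\sigma(a)}=q_a/p_0\), which is the displayed identity.

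For the bijectivity claim, consider the map
\[
\Phi:\{\LL,\MM,\UU\}^d\setminus\{\MM^d\}\to\mathcal G_d,\qquad a\mapsto (I(a),\sigma(a)).
\]
\(\Phi\) is a bijection. Its inverse sends \((I,\sigma)\) to the ternary state equal to \(\sigma_i\) on \(I\) and to \(\MM\) off \(I\). This inverse is well defined because \(I\neq\emptyset\) makes the state noncentral, and both compositions are the identity by the definitions of \(I(a)\) and \(\sigma(a)\). The Counting identity corollary confirms that the two sets have the same cardinality \(3^d-1\).

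Composing with \(\Phi\), the vector of noncentral masses \((q_a)_{a\neq\MM^d}\) and the weight vector \((w_{I,\sigma})_{(I,\sigma)\in\mathcal G_d}\) are related by a coordinate relabelling followed by scaling by the nonzero constant \(p_0\). The forward map \(w\mapsto q\) is given by Proposition~\ref{prop:w-to-q}, and its inverse \(q\mapsto w\) is the formula just derived. Each map is a linear bijection between \(\mathbb R^{3^d-1}\)-valued vectors, and both preserve nonnegativity.

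There is no real obstacle here. The one point needing care is that the correspondence \(\Phi\) is a genuine bijection, so that no two cells share a generator and no generator is left unmatched. Without this, the phrase ``determine each other bijectively'' would fail even though each individual identity holds. The central mass \(q_{\MM^d}\) is excluded from the statement; it is the residual \(1-p_0\sum w\) and is not needed here.
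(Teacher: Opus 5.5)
Your proposal is correct and follows the paper's route. The paper simply states that the corollary is immediate from Proposition~\ref{prop:w-to-q}, that is, you divide \(q_a=p_0\,w_{I(a),\sigma(a)}\) by \(p_0>0\) and use the cell--generator correspondence already recorded in Section~\ref{sec:construction}. Your explicit verification that \(\Phi\) is a bijection just spells out the part the paper leaves implicit.
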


\begin{proof}
Immediate from Proposition~\ref{prop:w-to-q}.
\end{proof}

\begin{remark}
This relation separates the tail-level and finite-threshold descriptions.
The witness weights \(w_{I,\sigma}\) are \(p_0\)-free tail-level parameters,
whereas the cell masses \(q_a\) describe a concrete finite-\(p_0\) realization
on the ternary grid. The parameter \(p_0\) enters only through the rescaling
\(q_a=p_0 w_{I(a),\sigma(a)}\) and its inversion on noncentral cells, in Proposition~\ref{prop:w-to-q}.
\end{remark}

\subsection{Total probability and admissibility}

\begin{proposition}[Total probability identity]
\label{prop:total-probability-q}
The ternary cell masses induced by a witness weight system satisfy
\[
\sum_{a\in\{\LL,\MM,\UU\}^d} q_a = 1.
\]
Equivalently,
\[
q_{\MM^d}+\sum_{a\neq \MM^d} q_a = 1.
\]
\end{proposition}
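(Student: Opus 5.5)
The plan is to derive the identity directly from Proposition~\ref{prop:w-to-q}, using the canonical bijection between noncentral ternary cells and generator indices. I will split the sum over all ternary states into the central term and the noncentral terms. This gives
\[
\sum_{a\in\{\LL,\MM,\UU\}^d} q_a = q_{\MM^d}+\sum_{a\neq\MM^d} q_a .
\]
This decomposition is legitimate because, by the remark in Section~\ref{sec:ternary-states}, \(\MM^d\) is the unique state with empty active set. The same decomposition already shows that the two displayed forms of the statement are equivalent.

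Next I will reindex the noncentral sum. The map \(a\mapsto(I(a),\sigma(a))\) is a bijection from \(\{\LL,\MM,\UU\}^d\setminus\{\MM^d\}\) onto \(\mathcal G_d\), as shown by the correspondence remark in Section~\ref{sec:construction} and the counting identity. Applying Proposition~\ref{prop:w-to-q} termwise then gives
\[
\sum_{a\neq\MM^d} q_a=p_0\sum_{(I,\sigma)\in\mathcal G_d} w_{I,\sigma}.
\]
Adding the central mass \(q_{\MM^d}=1-p_0\sum_{(I,\sigma)\in\mathcal G_d} w_{I,\sigma}\) from the same proposition, the two weighted sums cancel and the total is \(1\).

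The only point that needs care is the reindexing step. I must make sure that each noncentral cell is counted exactly once and that no generator is missed. Injectivity and surjectivity of \(a\mapsto(I(a),\sigma(a))\) secure this, and both are immediate: a state is recovered from its active set and its signs by filling the inactive coordinates with \(\MM\). The identity is purely bookkeeping and does not require \(q_{\MM^d}\ge0\); that nonnegativity condition is a separate admissibility requirement, treated in Corollary~\ref{cor:complete-compatibility}.
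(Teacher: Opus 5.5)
Your proposal is correct and follows the paper's proof. Both sum the noncentral masses via Proposition~\ref{prop:w-to-q} to obtain $p_0\sum_{(I,\sigma)\in\mathcal G_d} w_{I,\sigma}$, then add the residual central mass $q_{\MM^d}=1-p_0\sum_{(I,\sigma)\in\mathcal G_d} w_{I,\sigma}$ so that the weighted sums cancel; your explicit bijection check on $a\mapsto(I(a),\sigma(a))$ simply spells out the reindexing the paper uses implicitly.
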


\begin{proof}
By Proposition~\ref{prop:w-to-q},
\[
\sum_{a\neq \MM^d} q_a
=
p_0\sum_{(I,\sigma)\in\mathcal G_d} w_{I,\sigma}.
\]
Adding the central mass gives
\[
q_{\MM^d}+\sum_{a\neq \MM^d} q_a
=
\left(1-p_0\sum_{(I,\sigma)\in\mathcal G_d} w_{I,\sigma}\right)
+
p_0\sum_{(I,\sigma)\in\mathcal G_d} w_{I,\sigma}
=
1.
\]
\end{proof}

\begin{corollary}[Admissibility criterion]
\label{cor:admissibility-q}
The central cell mass is nonnegative if and only if
\[
p_0\sum_{(I,\sigma)\in\mathcal G_d} w_{I,\sigma}\le 1.
\]
Equivalently,
\(
q_{\MM^d}\ge 0.
\)
\end{corollary}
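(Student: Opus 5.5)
The plan is to read the criterion directly off the explicit formula for the central mass in Proposition~\ref{prop:w-to-q}. That proposition gives
\[
q_{\MM^d}=1-p_0\sum_{(I,\sigma)\in\mathcal G_d} w_{I,\sigma},
\]
which is exactly the central coefficient \(m_0\) of Definition~\ref{def:wweights}. The first step is to record this identification, \(q_{\MM^d}=m_0\), so the statement becomes a statement about the sign of an explicit affine expression in the weights.

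Second, we rearrange. The inequality \(q_{\MM^d}\ge0\) reads \(1-p_0\sum w_{I,\sigma}\ge 0\). Adding \(p_0\sum w_{I,\sigma}\) to both sides gives \(p_0\sum w_{I,\sigma}\le 1\). Every step is reversible, so both implications follow at once. No division by \(p_0\) is needed, although \(p_0>0\) would permit the equivalent form \(\sum w_{I,\sigma}\le 1/p_0\).

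Third, we may add one remark for context. By Proposition~\ref{prop:total-probability-q}, the masses always sum to one. Together with \(w_{I,\sigma}\ge0\), which gives \(q_a\ge0\) on the noncentral cells, the condition \(q_{\MM^d}\ge0\) is exactly what makes \((q_a)\) a probability vector on the ternary grid. This step is not needed for the equivalence itself.

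There is no real obstacle. The only point requiring care is that the central mass is defined as a residual, not as a generator weight. The equivalence must therefore be derived from the residual formula in Proposition~\ref{prop:w-to-q}, and not from any sign assumption built into the definition of the weights.
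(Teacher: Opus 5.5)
Your proposal is correct and matches the paper's proof, which is the one-line observation that the criterion is immediate from the residual formula \(q_{\MM^d}=1-p_0\sum_{(I,\sigma)\in\mathcal G_d} w_{I,\sigma}\) in Proposition~\ref{prop:w-to-q}. Your reversible rearrangement simply spells out that observation. As you note, your contextual remark about \((q_a)\) forming a probability vector is not needed for the equivalence.
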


\begin{proof}
Immediate from Proposition~\ref{prop:w-to-q}.
\end{proof}

\subsection{Tail-event combinatorics on the ternary grid}\label{sec:tail-event-combinatorics}

Once the ternary masses are in place, summing them over the cells compatible
with a prescribed signed pattern recovers the corresponding signed coefficients. This is the
finite-grid counterpart of the linear map \(w\mapsto\lambda\). At this stage
it is helpful to read the ternary combinatorics literally: a target pattern
\((J,\tau)\) fixes the visible lower/upper states on the coordinates in \(J\),
while all remaining coordinates are left free. Thus the associated support set
collects exactly those ternary cells whose visible pattern on \(J\) matches
the requested signed subpattern.

For every nonempty \(J\subseteq[d]\) and every sign pattern
\(\tau\in\{\LL,\UU\}^{J}\), define the ternary support set
\[
T_{J,\tau}:=\{a\in\{\LL,\MM,\UU\}^d:\ a|_{J}=\tau\},
\]
and the associated finite-threshold tail total
\[
t_{J,\tau}^{(p_0)}(q):=\sum_{a\in T_{J,\tau}} q_a.
\]
In other words, \(t_{J,\tau}^{(p_0)}(q)\) is the total ternary mass of all
cells whose visible pattern on \(J\) matches \(\tau\).

\begin{proposition}[Tail-event totals induced by witness weights]
\label{prop:tail-event-masses}
Fix \(p_0\in(0,1/2)\), let
\(w=(w_{I,\sigma})_{(I,\sigma)\in\mathcal G_d}\) be a witness weight system,
and let \(q\) be the induced ternary cell-mass vector from
Proposition~\ref{prop:w-to-q}. Then for every nonempty \(J\subseteq[d]\) and
every \(\tau\in\{\LL,\UU\}^{J}\),
\[
t_{J,\tau}^{(p_0)}(q)
=
p_0
\sum_{I\supseteq J}
\sum_{\sigma:\,\sigma|_{J}=\tau}
w_{I,\sigma}
=
p_0\,\lambda_{J,\tau}.
\]
Equivalently,
\[
\lambda_{J,\tau}
=
\frac{t_{J,\tau}^{(p_0)}(q)}{p_0}.
\]
\end{proposition}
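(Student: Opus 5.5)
We prove Proposition~\ref{prop:tail-event-masses} by substituting the cell masses from Proposition~\ref{prop:w-to-q} into the definition of \(t_{J,\tau}^{(p_0)}(q)\) and reindexing the sum. The reindexing uses the cell--generator correspondence of Section~\ref{sec:ternary-states}. The first observation is that the central cell never lies in \(T_{J,\tau}\). Indeed, \(J\) is nonempty and \(\tau\) takes values in \(\{\LL,\UU\}\), so any \(a\in T_{J,\tau}\) has \(a_j=\tau_j\neq\MM\) for \(j\in J\). In particular \(a\neq \MM^d\), so the residual mass \(q_{\MM^d}\) does not enter \(t_{J,\tau}^{(p_0)}(q)\). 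This is exactly why the identity carries no dependence on \(m_0\).

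Next we characterize \(T_{J,\tau}\) in generator language. For a noncentral \(a\), the condition \(a|_J=\tau\) holds if and only if the following two conditions hold. First, every \(j\in J\) satisfies \(a_j\neq\MM\), that is, \(J\subseteq I(a)\). Second, the induced sign pattern satisfies \(\sigma(a)|_J=\tau\). Using the bijection \(a\mapsto (I(a),\sigma(a))\) between noncentral cells and \(\mathcal G_d\), whose inverse sends \((I,\sigma)\) to the cell with \(a_i=\sigma_i\) on \(I\) and \(a_i=\MM\) off \(I\), we conclude that \(T_{J,\tau}\) is in bijection with
\[
\{(I,\sigma)\in\mathcal G_d:\ J\subseteq I,\ \sigma|_J=\tau\}.
\]

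With this bijection in hand, the sum can be rewritten directly:
\[
t_{J,\tau}^{(p_0)}(q)=\sum_{a\in T_{J,\tau}} p_0\,w_{I(a),\sigma(a)}
=p_0\sum_{I\supseteq J}\ \sum_{\sigma|_J=\tau} w_{I,\sigma}.
\]
Theorem~\ref{thm:w-to-lambda} identifies the inner double sum with \(\lambda_{J,\tau}\). Dividing by \(p_0>0\) then gives the equivalent form \(\lambda_{J,\tau}=t_{J,\tau}^{(p_0)}(q)/p_0\).

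The only step needing care is the bijection between \(T_{J,\tau}\) and the extending generators. Specifically, one must check that coordinates outside \(J\) are genuinely free over \(\{\LL,\MM,\UU\}\). Those set to \(\MM\) are precisely the coordinates excluded from \(I\), and those set to \(\LL\) or \(\UU\) enlarge \(I\) and extend \(\sigma\). Once this is checked, no cell is double counted or missed, and the rest of the argument is bookkeeping.
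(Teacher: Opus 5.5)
Your proposal is correct and follows essentially the same route as the paper: substitute \(q_a=p_0\,w_{I(a),\sigma(a)}\), reindex \(T_{J,\tau}\) through the cell--generator bijection as the set of extending pairs \((I,\sigma)\) with \(I\supseteq J\) and \(\sigma|_J=\tau\), invoke Theorem~\ref{thm:w-to-lambda}, and divide by \(p_0\). Your explicit observation that \(\MM^d\notin T_{J,\tau}\), so that the residual central mass never enters, spells out a point the paper's proof leaves implicit.
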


\begin{proof}
By Proposition~\ref{prop:w-to-q},
\[
t_{J,\tau}^{(p_0)}(q)=\sum_{a\in T_{J,\tau}} q_a.
\]
For a fixed signed target \((J,\tau)\), the coordinates in \(J\) are fixed by
\(\tau\), whereas each coordinate in \([d]\setminus J\) may be chosen freely
from \(\{\LL,\MM,\UU\}\). Hence \(T_{J,\tau}\) consists exactly of all ternary states
obtained by extending the pattern \((J,\tau)\) in this way.

Equivalently, using the one-to-one correspondence between noncentral ternary
states \(a\) and generator indices \((I(a),\sigma(a))\in G_d\), the set
\(T_{J,\tau}\) is indexed exactly by those pairs \((I,\sigma)\) such that
\(I\supseteq J\) and \(\sigma|_J=\tau\). Therefore
\[
t_{J,\tau}^{(p_0)}(q)
=\sum_{I\supseteq J}\sum_{\sigma|_J=\tau} p_0\,w_{I,\sigma}.
\]
By Theorem~\ref{thm:w-to-lambda},
\[
t_{J,\tau}^{(p_0)}(q)=p_0\,\lambda_{J,\tau},
\]
and division by \(p_0\) yields
\[
\lambda_{J,\tau}=\frac{t_{J,\tau}^{(p_0)}(q)}{p_0}.
\]
\end{proof}

\begingroup
\setlength{\intextsep}{0.2\baselineskip}
\setlength{\textfloatsep}{0.2\baselineskip}
\begin{figure}[ht]
\centering
\includegraphics[width=1\textwidth]{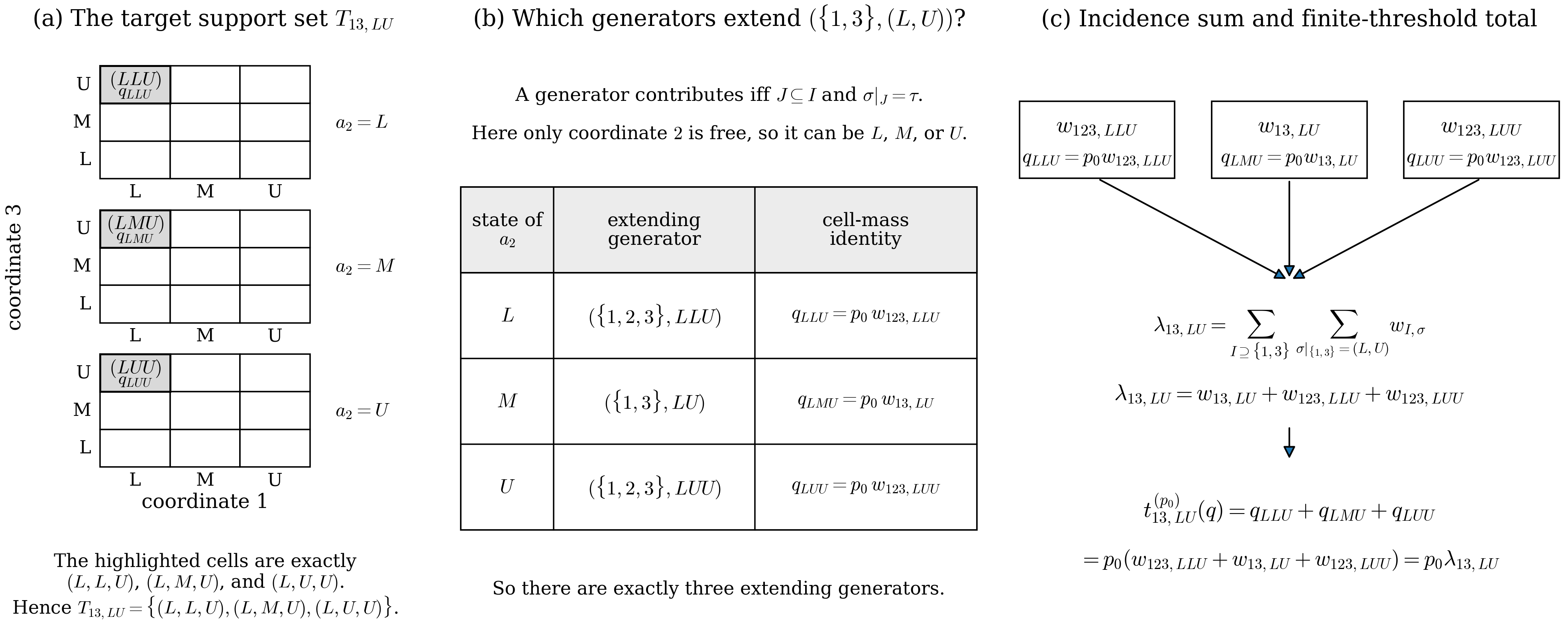}
\caption{A concrete \(d=3\) illustration of the signed extension rule and its
finite-threshold counterpart for the target pattern
\((J,\tau)=(\{1,3\},(\LL,\UU))\). Panel (a) shows the ternary support set
\(T_{13,LU}\), consisting exactly of the three compatible cells
\((\LL,\LL,\UU)\), \((\LL,\MM,\UU)\), and \((\LL,\UU,\UU)\). Panel (b) shows
the three witness generators whose signed patterns extend the same target.
Panel (c) records the corresponding incidence identity
\(\lambda_{13,LU}=w_{13,LU}+w_{123,LLU}+w_{123,LUU}\) and the matching
finite-threshold tail total
\(t_{13,LU}^{(p_0)}(q)=q_{LLU}+q_{LMU}+q_{LUU}=p_0\lambda_{13,LU}\).}
\label{fig:incidence-d3}
\end{figure}
\endgroup

Figure~\ref{fig:incidence-d3} illustrates this correspondence in dimension
\(d=3\): panel~(a) shows the compatible ternary cells, panel~(b) the
extending witness generators, and panel~(c) the matching incidence and
finite-threshold identities for
\((J,\tau)=(\{1,3\},(\LL,\UU))\).

\begin{remark}
After fixing \(p_0\), the witness weights and the ternary cell masses carry the
same information for all noncentral cells and for the induced tail totals. This
is still not a full description of arbitrary copulas with the same ternary cell
masses. In the present witness class, however, the internal generator geometry
is fixed to be the canonical ray geometry of Proposition~\ref{prop:canonical-generators}.
\end{remark}

\begin{remark}
After fixing \(p_0\), the noncentral ternary masses determine the corresponding witness weights by
\[
w_{I(a),\sigma(a)}=\frac{q_a}{p_0}.
\]
\end{remark}

\subsection{Fixed-scale invariance in the canonical ray geometry}
\label{subsec:fixed_scale_invariance}

The tail totals in Proposition~\ref{prop:tail-event-masses} describe the witness realization at the fixed
geometric scale $p_0$. For the canonical ray construction of
Proposition~\ref{prop:canonical-generators}, one can also ask what the same realized object produces at a
smaller threshold level $p\le p_0$. The answer is the key geometric fact of
the paper: within the canonical ray geometry, the realized signed tail family
does not change as $p$ moves below $p_0$.

To formulate this, fix a canonical witness realization at scale $p_0$, and let
$U=(U_1,\dots,U_d)$ denote the resulting random vector. For every nonempty
$J\subseteq[d]$, every $\tau\in\{L,U\}^J$, and every $0<p\le p_0$, define the
reflected tail coordinates
\[
D_i^{(\tau)} :=
\begin{cases}
U_i, & \tau_i=L,\\
1-U_i, & \tau_i=U,
\end{cases}
\qquad i\in J,
\]
and the corresponding finite-threshold signed coefficient
\[
\lambda^{(p)}_{J,\tau}(U)
:=
\frac1p\,
\Pr\!\bigl(D_i^{(\tau)}\le p \text{ for all } i\in J\bigr).
\]
For singleton indices this reduces to the usual lower- and upper-tail
normalization.

We first isolate the contribution of a single canonical generator.

\begin{proposition}[Single-generator fixed-scale invariance in the canonical ray geometry]
\label{prop:single-generator-fixed-scale}
Fix $(I,\sigma)\in\mathcal G_d$, a weight \(w\ge 0\), and a scale \(p_0\in(0,\frac12)\).
Consider the tail-scaled canonical generator component associated with
Proposition~\ref{prop:canonical-generators}, with total mass \(p_0w\).
Then its contribution to the finite-threshold signed coefficient is
\[
\lambda^{(p)}_{J,\tau}
=
w\,\mathbf 1\{J\subseteq I,\ \sigma|_J=\tau\}.
\]
Equivalently, a single canonical generator contributes the same amount $w$ to
every matching signed coefficient throughout the whole range $0<p\le p_0$,
and contributes $0$ to all nonmatching coefficients.
\end{proposition}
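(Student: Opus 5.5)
The plan is to compute the contribution directly from the canonical representation in Proposition~\ref{prop:canonical-generators}. The component is the law of $U^{(I,\sigma)}$ scaled by total mass $p_0w$, so its contribution to $\lambda^{(p)}_{J,\tau}$ is
\[
\frac{p_0 w}{p}\,\Pr\bigl(D_i^{(\tau)}\le p \text{ for all } i\in J\bigr),
\]
with $D^{(\tau)}$ built from $U^{(I,\sigma)}$. I will split into cases according to how each coordinate $i\in J$ relates to $(I,\sigma)$, and compute this single probability in each case.

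\emph{Case 1: some $i\in J$ has $i\notin I$.} Then $U_i\in M_{p_0}=[p_0,1-p_0]$, so $D_i^{(\tau)}\ge p_0$. The event $D_i^{(\tau)}\le p$ can occur only when $p=p_0$ and $U_i$ sits at an endpoint. Since $U_i$ is uniform on $M_{p_0}$, this has probability zero, and the contribution is $0$.

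\emph{Case 2: $J\subseteq I$ but $\sigma_i\neq\tau_i$ for some $i\in J$.} Say $\sigma_i=\UU$ and $\tau_i=\LL$. Then $U_i=1-p_0Z\ge 1-p_0>p_0\ge p$, so the event is empty. This uses $p_0<\tfrac12$. The symmetric case $\sigma_i=\LL$, $\tau_i=\UU$ is identical, with $D_i^{(\tau)}=1-p_0Z\ge 1-p_0>p$. The contribution is again $0$.

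\emph{Case 3: $J\subseteq I$ and $\sigma|_J=\tau$.} This is where the shared factor is used. For every $i\in J$ we get $D_i^{(\tau)}=p_0Z$, since $\tau_i=\LL$ gives $U_i=p_0Z$ and $\tau_i=\UU$ gives $1-U_i=p_0Z$. The joint event therefore collapses to the single event $\{p_0Z\le p\}$. Its probability is $p/p_0$ for $0<p\le p_0$, because $Z$ is uniform. The contribution is $\frac{p_0w}{p}\cdot\frac{p}{p_0}=w$, which is independent of $p$.

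Combining the three cases gives $w\,\mathbf 1\{J\subseteq I,\ \sigma|_J=\tau\}$. The only delicate point is Case 3, specifically the reduction of a $|J|$-dimensional event to a one-dimensional one. That reduction relies entirely on every active coordinate being driven by the same variable $Z$ along the ray. The endpoint null-set remark in Case 1 and the strict inequality $p_0<\tfrac12$ in Case 2 are the only other details that need attention.
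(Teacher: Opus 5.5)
Your proposal is correct and follows essentially the same argument as the paper. The paper groups your Cases 1 and 2 into one nonmatching case with the same two subcases ($i\notin I$ gives $D_i^{(\tau)}\ge p_0$ up to a null boundary event, and a sign mismatch gives $D_i^{(\tau)}=1-p_0Z\in[1-p_0,1]$); in the matching case it uses the same shared-factor collapse $\{D_i^{(\tau)}\le p\ \text{for all } i\in J\}=\{Z\le p/p_0\}$, yielding mass $p_0w\cdot p/p_0=pw$, and your explicit remarks on the endpoint null event and on $p_0<\tfrac12$ just spell out what the paper covers by its ``modulo null boundary events'' convention.
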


\begin{proof}
Let $U=(U_1,\dots,U_d)$ be the canonical generator from Proposition~\ref{prop:canonical-generators} with
total mass $p_0w$. All endpoint cases are understood modulo null boundary events.

First assume that $J\not\subseteq I$ or $\sigma|_J\ne\tau$. Then there exists
$i\in J$ such that either $i\notin I$ or the sign on $i$ does not match.

If $i\notin I$, then $U_i\in M_{p_0}=[p_0,1-p_0]$ almost surely. Hence
$D_i^{(\tau)}\ge p_0$ almost surely, so $D_i^{(\tau)}\le p$ is impossible for
$0<p\le p_0$, up to a null boundary event.

If $i\in I$ but $\sigma_i\ne\tau_i$, then the active coordinate lies in the
opposite tail. Thus either $U_i=1-p_0Z$ while $\tau_i=L$, or
$U_i=p_0Z$ while $\tau_i=U$. In both cases,
$D_i^{(\tau)}=1-p_0Z\in[1-p_0,1]$ almost surely, so again
$D_i^{(\tau)}\le p$ is impossible for $0<p\le p_0$.
Therefore
\[
\lambda^{(p)}_{J,\tau}=0
\]
whenever $J\not\subseteq I$ or $\sigma|_J\ne\tau$.

Now assume that $J\subseteq I$ and $\sigma|_J=\tau$. Then for every $i\in J$,
by construction of the reflected coordinates and Proposition~\ref{prop:canonical-generators},
\[
D_i^{(\tau)}=p_0Z.
\]
Hence
\[
\{D_i^{(\tau)}\le p \text{ for all } i\in J\}
=
\{p_0Z\le p\}
=
\{Z\le p/p_0\}.
\]
Since $Z\sim\mathrm{Unif}[0,1]$,
\[
\Pr(Z\le p/p_0)=p/p_0.
\]
Multiplying by the total mass $p_0w$ of the generator gives
\[
\Pr\!\bigl(D_i^{(\tau)}\le p \text{ for all } i\in J\bigr)
=
p_0w\cdot \frac{p}{p_0}
=
pw.
\]
Dividing by $p$ yields
\[
\lambda^{(p)}_{J,\tau}=w.
\]
This proves the claim.
\end{proof}

By linear superposition, the same invariance holds for the full canonical
witness. The central component contributes nothing to nonempty signed tail
events at levels $p\le p_0$, because all of its coordinates lie in $M_{p_0}$.

\begin{corollary}[Fixed-scale invariance of the full canonical witness]
\label{cor:full-witness-fixed-scale}
Fix \(p_0\in(0,1/2)\) and a nonnegative witness weight system
\(w=(w_{I,\sigma})_{(I,\sigma)\in \mathcal G_d}\) with
\[
m_0=1-p_0\sum_{(I,\sigma)\in\mathcal G_d}w_{I,\sigma}\ge 0.
\]
Let \(U\) be the corresponding canonical witness realization at scale \(p_0\).

Then for every nonempty
$J\subseteq[d]$, every $\tau\in\{L,U\}^J$, and every $0<p\le p_0$,
\[
\lambda^{(p)}_{J,\tau}(U)
=
\sum_{I\supseteq J}\ \sum_{\sigma:\,\sigma|_J=\tau} w_{I,\sigma}
=
\lambda_{J,\tau}.
\]
In particular,
\[
\lambda^{(p)}=\lambda
\qquad\text{for all }0<p\le p_0.
\]
\end{corollary}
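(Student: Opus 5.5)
The plan is to write the law of the canonical witness realization \(U\) at scale \(p_0\) as a finite mixture and then use the fact that the event defining \(\lambda^{(p)}_{J,\tau}\) has probability linear in that mixture. The mixture components are the noncentral canonical generators \(U^{(I,\sigma)}\) of Proposition~\ref{prop:canonical-generators}, with mixing weights \(p_0 w_{I,\sigma}\), together with the central component \(U^{(0)}\), with weight \(m_0\). The hypotheses \(w\ge0\) and \(m_0\ge0\) guarantee that these are genuine nonnegative mixing weights summing to one. Concretely, for the event
\[
E_p:=\{D_i^{(\tau)}\le p \text{ for all } i\in J\},
\]
I will write
\[
\Pr(U\in E_p)=\sum_{(I,\sigma)\in\mathcal G_d} p_0 w_{I,\sigma}\,\Pr\bigl(U^{(I,\sigma)}\in E_p\bigr)+m_0\,\Pr\bigl(U^{(0)}\in E_p\bigr).
\]

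Next I treat the two kinds of terms separately. For each noncentral term, Proposition~\ref{prop:single-generator-fixed-scale} applies directly. It says that the tail-scaled component of total mass \(p_0w_{I,\sigma}\) contributes exactly \(p\,w_{I,\sigma}\mathbf 1\{J\subseteq I,\ \sigma|_J=\tau\}\) to \(\Pr(U\in E_p)\), for every \(0<p\le p_0\).

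For the central term, every coordinate of \(U^{(0)}\) lies in \(M_{p_0}=[p_0,1-p_0]\). Hence every reflected coordinate satisfies \(D_i^{(\tau)}\ge p_0\). Since \(J\neq\emptyset\), the event \(E_p\) can occur only on the null boundary event \(\{D_i^{(\tau)}=p_0=p\}\), which has probability zero because \(U^{(0)}_i\) is uniform. So the central term vanishes.

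Summing the contributions and dividing by \(p\) gives
\[
\lambda^{(p)}_{J,\tau}(U)=\sum_{(I,\sigma)} w_{I,\sigma}\mathbf 1\{J\subseteq I,\ \sigma|_J=\tau\}.
\]
By Theorem~\ref{thm:w-to-lambda}, this is exactly \(\sum_{I\supseteq J}\sum_{\sigma|_J=\tau}w_{I,\sigma}=\lambda_{J,\tau}\). The statement \(\lambda^{(p)}=\lambda\) for all \(0<p\le p_0\) then follows because \((J,\tau)\) was arbitrary.

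The only delicate point is making the superposition rigorous: the realization \(U\) has to be the mixture with exactly these weights. This is the measure-theoretic shell deferred to Section~\ref{sec:formal-shell}, where one selects a component by an independent categorical variable with probabilities \((p_0w_{I,\sigma})\) and \(m_0\). I would state that construction explicitly at the start of the proof and note that probabilities of a fixed event are affine in the mixing weights. I would also handle the endpoint case \(p=p_0\) carefully, since this is where the closed intervals overlap. Here the boundary events \(\{U_i=p_0\}\) and \(\{U_i=1-p_0\}\) are null under every component, because each coordinate is uniform on its interval. The remaining computations are routine.
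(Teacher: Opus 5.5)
Your proposal is correct and follows essentially the same route as the paper: write the canonical witness as the mixture of tail-scaled generators and the central component, apply Proposition~\ref{prop:single-generator-fixed-scale} to each generator, observe that the central component contributes nothing for $p\le p_0$, and identify the resulting incidence sum with Theorem~\ref{thm:w-to-lambda}. Your explicit treatment of the mixture construction and of the null boundary event at $p=p_0$ spells out details that the paper leaves implicit.
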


\begin{proof}
Each noncentral canonical generator contributes according to
Proposition~\ref{prop:single-generator-fixed-scale}. Summing over all
generators therefore gives
\[
\lambda^{(p)}_{J,\tau}(U)
=
\sum_{(I,\sigma)\in\mathcal G_d}
w_{I,\sigma}\,\mathbf 1\{J\subseteq I,\ \sigma|_J=\tau\}.
\]
The central component contributes $0$ because all of its coordinates lie in
$M_{p_0}$ and therefore cannot enter a nonempty signed tail event at a level
$p\le p_0$. The displayed sum is exactly the incidence formula from
Theorem~\ref{thm:w-to-lambda}, hence
\[
\lambda^{(p)}_{J,\tau}(U)=\lambda_{J,\tau}.
\]
\end{proof}

This result explains why the canonical ray geometry is particularly natural:
once a complete signed family has been realized at scale $p_0$, the same
canonical witness automatically preserves it throughout the whole range
$0<p\le p_0$.

\subsection{Exact margins and singleton normalization}

The remaining step in this finite-\(p_0\) passage is to enforce exact one-dimensional margins.

\begin{proposition}[Exact uniform margins]
\label{prop:exact-margins}
Let \(w=(w_{I,\sigma})_{(I,\sigma)\in\mathcal G_d}\) be a witness weight system.
The associated witness mixture has exact uniform one-dimensional margins if and only if, for every
\(i\in[d]\),
\[
\sum_{\substack{(I,\sigma)\in\mathcal G_d\\ i\in I,\ \sigma_i=\LL}} w_{I,\sigma}=1,
\qquad
\sum_{\substack{(I,\sigma)\in\mathcal G_d\\ i\in I,\ \sigma_i=\UU}} w_{I,\sigma}=1.
\]
\end{proposition}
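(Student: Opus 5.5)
The plan is to compute the one-dimensional marginal law of coordinate \(i\) under the witness mixture explicitly, as a sum of contributions from the central component and the \(3^d-1\) canonical generators, using only the support description of Proposition~\ref{prop:canonical-generators}. The mixture assigns mass \(p_0w_{I,\sigma}\) to the canonical generator \((I,\sigma)\) and mass \(m_0\) to the central component \(U^{(0)}\). By Proposition~\ref{prop:canonical-generators}, within each component the coordinate \(U_i\) is uniform on exactly one of the three intervals. It is uniform on \(L_{p_0}\) if \(i\in I,\ \sigma_i=\LL\), uniform on \(U_{p_0}\) if \(i\in I,\ \sigma_i=\UU\), and uniform on \(M_{p_0}\) otherwise, including the central component. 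Hence the law of \(U_i\) is a mixture of three uniform densities with disjoint supports (modulo endpoints):
\[
f_i=\alpha_i^{\LL}\,\frac{\mathbf 1_{L_{p_0}}}{p_0}+\beta_i\,\frac{\mathbf 1_{M_{p_0}}}{1-2p_0}+\alpha_i^{\UU}\,\frac{\mathbf 1_{U_{p_0}}}{p_0},
\]
where \(\alpha_i^{\LL}=p_0\sum_{i\in I,\sigma_i=\LL}w_{I,\sigma}\) and \(\alpha_i^{\UU}=p_0\sum_{i\in I,\sigma_i=\UU}w_{I,\sigma}\). The middle weight is \(\beta_i=m_0+p_0\sum_{i\notin I}w_{I,\sigma}\), so that \(\alpha_i^{\LL}+\alpha_i^{\UU}+\beta_i=1\) by Proposition~\ref{prop:total-probability-q}.

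For sufficiency, assume both displayed sums equal \(1\). Then \(\alpha_i^{\LL}=\alpha_i^{\UU}=p_0\), and the total-probability identity forces \(\beta_i=1-2p_0\). The three pieces of \(f_i\) then all have density \(1\) on their intervals, so \(f_i\equiv 1\) a.e. on \([0,1]\), and \(U_i\) is uniform.

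For necessity, assume \(U_i\) is uniform. Then \(\Pr(U_i\in L_{p_0})=p_0\). Since the only components charging \(L_{p_0}\) (outside a null endpoint) are those with \(i\in I,\ \sigma_i=\LL\), this gives \(\alpha_i^{\LL}=p_0\), i.e.\ the first sum equals \(1\). The same argument with \(U_{p_0}\) gives the second sum. Equivalently, these are the singleton rows of Proposition~\ref{prop:tail-event-masses} read with \(t^{(p_0)}_{\{i\},\LL}=\Pr(U_i\le p_0)\).

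The main obstacle is a bookkeeping point in the sufficiency direction. The witness weight system is only assumed nonnegative, so one must make sure the mixture is a genuine probability measure, namely \(m_0\ge 0\). The middle piece is automatically correct once the two tail masses are fixed, but only if \(\beta_i\) is a legitimate nonnegative mixture weight. I would handle this by noting that the phrase ``associated witness mixture'' presupposes \(m_0\ge0\) (Corollary~\ref{cor:admissibility-q}). Under that convention, \(\beta_i\ge m_0\ge0\), and the argument above is complete. Endpoint overlaps are dismissed as null sets throughout.
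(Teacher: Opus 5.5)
Your proposal is correct and follows essentially the same route as the paper: split the $i$-th marginal over $L_{p_0}$, $M_{p_0}$, $U_{p_0}$, read the two tail densities off the generators with $i\in I$ and the matching sign, and let the total-mass identity $m_0+p_0\sum w_{I,\sigma}=1$ force the constant middle density to equal $1$ (necessity by reading off the tail masses $p_0$). Your $m_0\ge 0$ caveat is harmless; the paper leaves it implicit, as in Definition~\ref{def:witness-copula}. Note, though, that once the tail identities hold, $\beta_i=1-2p_0>0$ follows from the total-mass identity whatever the sign of $m_0$, so $m_0\ge 0$ matters only for the mixture to be a genuine probability measure, not for the marginal computation.
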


\begin{proof}

Fix \(i\in[d]\). We examine the \(i\)-th marginal on the three intervals
\(L_{p_0}\), \(U_{p_0}\), and \(M_{p_0}\). The active set \(I\) then ranges
over all generators according as \(i\in I\) or \(i\notin I\). We compare the three parts of the
\(i\)-th marginal separately: the lower tail interval \(L_{p_0}\), the upper
tail interval \(U_{p_0}\), and the middle interval \(M_{p_0}\).

On \(L_{p_0}\), only generators with \(i\in I\) and \(\sigma_i=\LL\)
contribute. Each such generator has total mass \(p_0 w_{I,\sigma}\), and its
\(i\)-th marginal is uniform on an interval of length \(p_0\); hence it
contributes the constant density \(w_{I,\sigma}\) on \(L_{p_0}\). Here the
outer sum ranges over \emph{all} active sets containing the fixed coordinate
\(i\), regardless of their cardinality. Hence the total density on 
\(L_{p_0}\)
is
\[
\sum_{\substack{(I,\sigma)\in\mathcal G_d\\ i\in I,\ \sigma_i=\LL}} w_{I,\sigma}.
\]

Similarly, on \(U_{p_0}\), the only contributors are the generators with
\(i\in I\) and \(\sigma_i=\UU\), and each such generator contributes the
constant density \(w_{I,\sigma}\) there. Thus the total density on \(U_{p_0}\)
is
\[
\sum_{\substack{(I,\sigma)\in\mathcal G_d\\ i\in I,\ \sigma_i=\UU}} w_{I,\sigma}.
\]

Therefore the \(i\)-th marginal is uniform on the two tail intervals if and
only if both displayed sums are equal to \(1\). It remains to verify the middle interval.

On \(M_{p_0}\), every generator with \(i\notin I\) contributes the constant density \(p_0 w_{I,\sigma}/(1-2p_0)\), and the central cell contributes the constant density \(q_{\MM^d}/(1-2p_0)\). Hence the total \(i\)-th marginal density on \(M_{p_0}\) is constant. Assume now that the two tail identities hold. Then the \(i\)-th marginal density is \(1\) on \(L_{p_0}\) and on \(U_{p_0}\), so these two intervals already carry masses \(p_0\) and \(p_0\). On the other hand, by the definition of the central coefficient in Definition~\ref{def:wweights}, the total mass of the witness mixture is 
\[ \Bigl(1-p_0\sum_{(I,\sigma)\in\mathcal G_d} w_{I,\sigma}\Bigr) \;+\; p_0\sum_{(I,\sigma)\in\mathcal G_d} w_{I,\sigma} =1. 
\]
Therefore the \(i\)-th marginal also has total mass \(1\). The remaining mass assigned to \(M_{p_0}\) is thus exactly \(1-2p_0\). Since \(|M_{p_0}|=1-2p_0\) and the density on \(M_{p_0}\) is constant, that density must equal \(1\). Hence the middle part of the \(i\)-th margin is also uniform.

Conversely, if the \(i\)-th margin is uniform on \([0,1]\), then its masses on
\(L_{p_0}\) and \(U_{p_0}\) must each equal \(p_0\). Dividing by \(p_0\) gives
exactly the two displayed identities. Since \(i\) was arbitrary, the result
follows for all coordinates. Hence the displayed conditions are both
necessary and sufficient for exact uniform one-dimensional margins.
\end{proof}

\begin{corollary}[Singleton normalization]
\label{cor:singleton-normalization}
For a witness weight system, the exact margin conditions are equivalent to
\[
\lambda_{\{i\},\LL}=1,
\qquad
\lambda_{\{i\},\UU}=1,
\qquad i=1,\dots,d.
\]
\end{corollary}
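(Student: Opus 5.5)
The plan is to combine Proposition~\ref{prop:exact-margins} with the singleton case of the incidence formula in Theorem~\ref{thm:w-to-lambda}. The corollary then reduces to recognising that the two sums appearing in the exact margin conditions are precisely the singleton coefficients \(\lambda_{\{i\},\LL}\) and \(\lambda_{\{i\},\UU}\).

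First, fix \(i\in[d]\) and apply Theorem~\ref{thm:w-to-lambda} with \(J=\{i\}\) and \(\tau=\LL\). The formula gives
\[
\lambda_{\{i\},\LL}
=
\sum_{I\supseteq\{i\}}\ \sum_{\sigma:\,\sigma|_{\{i\}}=\LL} w_{I,\sigma}.
\]
The condition \(I\supseteq\{i\}\) is the same as \(i\in I\). The condition \(\sigma|_{\{i\}}=\LL\) is the same as \(\sigma_i=\LL\). So the index set of this double sum is exactly \(\{(I,\sigma)\in\mathcal G_d: i\in I,\ \sigma_i=\LL\}\), which is the index set of the first sum in Proposition~\ref{prop:exact-margins}. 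The identical argument with \(\tau=\UU\) matches \(\lambda_{\{i\},\UU}\) to the second sum.

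Second, substitute these identifications into Proposition~\ref{prop:exact-margins}. That proposition says the margins are exactly uniform if and only if both sums equal \(1\) for every \(i\). After substitution, this becomes \(\lambda_{\{i\},\LL}=\lambda_{\{i\},\UU}=1\) for all \(i\), which gives both directions of the equivalence at once.

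There is no real obstacle. The only care needed is bookkeeping: the sum must range over generators of every order \(|I|\ge1\) containing \(i\), not just the singleton generator. Both Theorem~\ref{thm:w-to-lambda} and Proposition~\ref{prop:exact-margins} already use this full range. As a consistency check, one could also invoke Proposition~\ref{prop:tail-event-masses}: \(p_0\lambda_{\{i\},\LL}=t^{(p_0)}_{\{i\},\LL}(q)\) is the mass of the \(i\)-th margin on \(L_{p_0}\), and uniformity requires this mass to equal \(p_0\).
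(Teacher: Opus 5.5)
Your proposal is correct and follows the paper's proof. Both specialize Theorem~\ref{thm:w-to-lambda} to $J=\{i\}$, identify the resulting index set $\{(I,\sigma): i\in I,\ \sigma_i=\LL\}$ (and its $\UU$ analogue) with the sums in Proposition~\ref{prop:exact-margins}, and read off the equivalence.
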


\begin{proof}
For \(J=\{i\}\) and \(\tau=\LL\), Theorem~\ref{thm:w-to-lambda} gives
\[
\lambda_{\{i\},\LL}
=
\sum_{(I,\sigma)\in G_d:\, i\in I,\ \sigma_i=\LL} w_{I,\sigma}.
\]
Similarly, for \(\tau=\UU\),
\[
\lambda_{\{i\},\UU}
=
\sum_{(I,\sigma)\in G_d:\, i\in I,\ \sigma_i=\UU} w_{I,\sigma}.
\]
Comparing these identities with Proposition~\ref{prop:exact-margins} yields the claim.
\end{proof}

Once the tail coefficients and exact margins are expressed in the witness
parametrization, it is natural to compare them with the finite-\(p_0\)
ternary cell masses used for realization and simulation.

\begin{remark}[Tail-level validity versus finite-$p_0$ admissibility]
Tail-level validity is encoded by the nonnegative witness weights together
with the singleton normalization and the linear relations $\lambda=Aw$.
For the canonical ray realization, Corollary~\ref{cor:full-witness-fixed-scale}
shows that the same complete signed family is then preserved throughout the
whole threshold range $0<p\le p_0$. A finite-$p_0$ realization requires one
additional geometric condition, namely nonnegativity of the residual central
mass after the passage $q_a=p_0 w_{I(a),\sigma(a)}$.
\end{remark}

\newpage

\section{Partial, noisy, and inconsistent targets via LP}
\label{sec:lp}

Section~\ref{sec:tailmap} provides the tail-level incidence/inversion layer, while Section~\ref{sec:margins} provides the fixed-scale rescaling to ternary masses. We now use the same witness parameterization for partial, noisy, and inconsistent targets.

\subsection{Complete case: inversion rather than optimization}

Suppose that the complete signed tail family
\[
\bigl(\lambda_{J,\tau}\bigr)_{\emptyset\neq J\subseteq[d],\ \tau\in\{\LL,\UU\}^{J}}
\]
is given.

By Proposition~\ref{prop:triangular-inversion}, the witness weights are then
recovered recursively by
\[
w_{I,\sigma}
=
\lambda_{I,\sigma}
-
\sum_{K\supsetneq I}
\ \sum_{\rho:\,\rho|_I=\sigma}
w_{K,\rho},
\qquad
(I,\sigma)\in\mathcal G_d,
\]
starting from active sets of size \(d\) and proceeding down to active sets of
size \(1\). In this recursion, \(K\supsetneq I\) denotes a strict active-set
superset, while \(\rho|_I=\sigma\) means that the sign pattern on the larger
set \(K\) agrees with \(\sigma\) when restricted back to the coordinates of
\(I\).

This recursion is already the complete direct-inversion procedure: compute the
weights by descending active-set size, then check nonnegativity and singleton
normalization as in Corollary~\ref{cor:complete-tail-validity}. The geometric
realization scale \(p_0\) plays no role in the recovery itself. Once the recovered
system is tail-level valid, finite-\(p_0\) admissibility is checked separately
through the residual central-mass condition; see
Corollary~\ref{cor:complete-compatibility} and Section~\ref{sec:asymptotic}.

\begin{remark}
For complete specifications, witness recovery is therefore an explicit linear
elimination problem rather than a genuine optimization problem. Linear
programming becomes necessary only when the target information is incomplete,
noisy, or internally inconsistent.
\end{remark}

\subsection{Feasibility LP for partial specifications}

We now consider the more general situation in which only part of the complete
signed tail family is prescribed.

Let
\[
\mathcal S
\subseteq
\{(J,\tau): \emptyset\neq J\subseteq[d],\ \tau\in\{\LL,\UU\}^{J}\}
\]
be the set of specified signed tail coefficients, and let
\[
\lambda^{\mathrm{target}}_{J,\tau},
\qquad
(J,\tau)\in\mathcal S,
\]
denote the prescribed values. Thus \(\mathcal S\) is an arbitrary subset of the complete signed tail family
encoding the partial specification under consideration.

The unknowns are the witness weights \(w_{I,\sigma}\ge 0\), \((I,\sigma)\in G_d\).
For a prescribed partial specification at scale \(p_0\), the witness constraints take the form

\begin{align}
w_{I,\sigma} &\ge 0,
&& (I,\sigma)\in\mathcal G_d,
\label{eq:lp-nonneg}
\\
\sum_{I\supseteq J}
\ \sum_{\sigma:\,\sigma|_J=\tau}
w_{I,\sigma}
&=
\lambda^{\mathrm{target}}_{J,\tau},
&& (J,\tau)\in\mathcal S,
\label{eq:lp-tail}
\\
\sum_{\substack{(I,\sigma)\in\mathcal G_d\\ i\in I,\ \sigma_i=\LL}} w_{I,\sigma}
&=1,
&& i=1,\dots,d,
\label{eq:lp-margin-L}
\\
\sum_{\substack{(I,\sigma)\in\mathcal G_d\\ i\in I,\ \sigma_i=\UU}} w_{I,\sigma}
&=1,
&& i=1,\dots,d,
\label{eq:lp-margin-U}
\\
\sum_{(I,\sigma)\in\mathcal G_d} w_{I,\sigma}
&\le \frac{1}{p_0}.
\label{eq:lp-admiss}
\end{align}

\begin{proposition}[LP feasibility formulation]
\label{prop:lp-feasibility}
A partial signed specification \(\bigl(\lambda^{\mathrm{target}}_{J,\tau}\bigr)_{(J,\tau)\in\mathcal S}\)
is realizable within the witness class at scale \(p_0\) if and only if there exists
a witness weight system \(w=(w_{I,\sigma})_{(I,\sigma)\in G_d}\) satisfying
\eqref{eq:lp-nonneg}--\eqref{eq:lp-admiss}.
\end{proposition}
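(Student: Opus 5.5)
The argument is a two-direction equivalence, with each side reduced to results already established in Sections~\ref{sec:tailmap} and~\ref{sec:margins}. We read ``realizable within the witness class at scale \(p_0\)'' to mean the following. There is a nonnegative witness weight system \(w\) whose canonical mixture (Proposition~\ref{prop:canonical-generators}, weights \(p_0w_{I,\sigma}\) on the noncentral generators and residual mass \(m_0\) on the central component) is a probability measure with exact uniform margins, and whose induced signed coefficients satisfy \(\lambda_{J,\tau}=\lambda^{\mathrm{target}}_{J,\tau}\) for every \((J,\tau)\in\mathcal S\).

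\emph{Necessity.} Start from such a realization with weights \(w\). Nonnegativity \eqref{eq:lp-nonneg} holds by Definition~\ref{def:wweights}. By Theorem~\ref{thm:w-to-lambda}, each induced coefficient equals the incidence sum, so matching the targets on \(\mathcal S\) is exactly \eqref{eq:lp-tail}. Exact uniform margins are equivalent to \eqref{eq:lp-margin-L}--\eqref{eq:lp-margin-U} by Proposition~\ref{prop:exact-margins}, or equivalently by Corollary~\ref{cor:singleton-normalization}. The mixture is a probability measure only if the central coefficient satisfies \(m_0=q_{\MM^d}\ge0\), and by Corollary~\ref{cor:admissibility-q} this is \eqref{eq:lp-admiss}.

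\emph{Sufficiency.} Given \(w\) satisfying \eqref{eq:lp-nonneg}--\eqref{eq:lp-admiss}, build the canonical mixture at scale \(p_0\). Inequality \eqref{eq:lp-admiss} gives \(m_0\ge0\), and Proposition~\ref{prop:total-probability-q} gives total mass \(1\), so the mixture is a probability measure. Proposition~\ref{prop:exact-margins} then gives uniform margins, hence a copula. Theorem~\ref{thm:w-to-lambda}, together with the finite-threshold readout of Proposition~\ref{prop:tail-event-masses}, gives coefficients matching the targets on \(\mathcal S\). If realizability is also meant at thresholds \(0<p\le p_0\), Corollary~\ref{cor:full-witness-fixed-scale} extends the matching to that whole range.

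The only delicate point is fixing the meaning of ``realizable within the witness class''. The proof should state explicitly that the realized coefficient is the witness-level \(\lambda=Aw\), equivalently the finite-threshold \(\lambda^{(p)}\) for any \(p\le p_0\). Once this is fixed, the listed constraints are in exact one-to-one correspondence with the defining requirements, and no further argument is needed.
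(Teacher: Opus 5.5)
Your proposal is correct and follows essentially the same route as the paper. Necessity unpacks witness realizability into nonnegativity, the incidence equations of Theorem~\ref{thm:w-to-lambda}, the margin conditions of Proposition~\ref{prop:exact-margins}, and the central-mass bound, and sufficiency reads these same results in reverse; your explicit definition of ``realizable within the witness class'' and your use of the total-probability identity simply make precise what the paper's proof leaves to ``by definition.''
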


\begin{proof}
The statement is a reformulation of realizability within the witness class in
terms of the witness weights. Indeed, if a partial signed specification is
realizable within the witness class at scale \(p_0\), then by definition there
exists a witness weight system whose coefficients match the prescribed tail
equations, satisfy nonnegativity, enforce the exact margin conditions, and
meet the admissibility bound. Hence \eqref{eq:lp-nonneg}--\eqref{eq:lp-admiss}
is feasible.

Conversely, let \(w=(w_{I,\sigma})_{(I,\sigma)\in\mathcal G_d}\) be any feasible
solution of \eqref{eq:lp-nonneg}--\eqref{eq:lp-admiss}. By
\eqref{eq:lp-tail} and Theorem~\ref{thm:w-to-lambda}, the specified signed tail coefficients are
matched. By \eqref{eq:lp-margin-L}, \eqref{eq:lp-margin-U}, and
Proposition~\ref{prop:exact-margins}, the exact margin conditions hold. By Proposition~\ref{prop:w-to-q} and
\eqref{eq:lp-admiss}, the induced central mass
\(q_{M^d}=1-p_0\sum_{(I,\sigma)\in\mathcal G_d} w_{I,\sigma}\)
is nonnegative. Thus the feasible weight system defines an admissible
finite-\(p_0\) witness realization matching the prescribed partial
specification.
\end{proof}

\begin{remark}
If the geometric scale \(p_0\) is not fixed in advance, then the admissibility
constraint \eqref{eq:lp-admiss} may be omitted at the tail level and imposed
only later, when a concrete finite-scale realization is selected.
\end{remark}

\subsection{Noisy repair by weighted-\(\ell^1\)}

In practical calibration, the prescribed coefficients may be estimated from data,
elicited from experts, or assembled from heterogeneous sources. Such targets may
fail to be exactly realizable within the witness class. In this case, one may
seek a \emph{closest compatible} witness profile.

A convenient linear formulation uses positive and negative slack variables
\[
r_{J,\tau}^+\ge 0,
\qquad
r_{J,\tau}^-\ge 0,
\qquad
(J,\tau)\in\mathcal S,
\]
and replaces the exact tail equations \eqref{eq:lp-tail} by
\begin{equation}
\sum_{I\supseteq J}
\ \sum_{\sigma:\,\sigma|_J=\tau}
w_{I,\sigma}
-
\lambda^{\mathrm{target}}_{J,\tau}
=
r_{J,\tau}^+ - r_{J,\tau}^-,
\qquad
(J,\tau)\in\mathcal S.
\label{eq:lp-slack}
\end{equation}
The quantity
\(
r_{J,\tau}^+ + r_{J,\tau}^-
\)
then measures the absolute deviation from the target coefficient
\(\lambda^{\mathrm{target}}_{J,\tau}\).

Given strictly positive calibration weights \(\omega_{J,\tau}>0\) for all
\((J,\tau)\in\mathcal S\), one may minimize the weighted \(\ell^1\)-misfit
\begin{equation}
\min
\sum_{(J,\tau)\in\mathcal S}
\omega_{J,\tau}\bigl(r_{J,\tau}^+ + r_{J,\tau}^-\bigr)
\label{eq:lp-L1-objective}
\end{equation}
subject to
\eqref{eq:lp-nonneg}, \eqref{eq:lp-margin-L}, \eqref{eq:lp-margin-U},
\eqref{eq:lp-admiss}, and \eqref{eq:lp-slack}.

\begin{proposition}[Approximate recovery by linear optimization]
\label{prop:lp-approx}
Assume strictly positive calibration weights \(\omega_{J,\tau}>0\) for all
\((J,\tau)\in\mathcal S\). Then problem \eqref{eq:lp-L1-objective} with
constraints \eqref{eq:lp-nonneg}, \eqref{eq:lp-margin-L},
\eqref{eq:lp-margin-U}, \eqref{eq:lp-admiss}, and \eqref{eq:lp-slack} is a
linear program. Its optimal value is zero if and only if the target
specification is exactly realizable within the witness class at scale \(p_0\).
\end{proposition}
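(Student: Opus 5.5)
The argument has three parts: check that the problem is linear, show the optimal value is nonnegative and attained, and prove the equivalence in both directions using Proposition~\ref{prop:lp-feasibility}.

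\emph{Linearity.} The decision variables are the witness weights \(w_{I,\sigma}\) and the slacks \(r^\pm_{J,\tau}\). Constraints \eqref{eq:lp-nonneg}, \eqref{eq:lp-margin-L}, \eqref{eq:lp-margin-U}, \eqref{eq:lp-admiss}, and \eqref{eq:lp-slack} are linear equalities or inequalities in these variables. By Theorem~\ref{thm:w-to-lambda} the incidence sums are fixed 0/1 combinations, and \(p_0\) appears only as the constant \(1/p_0\). The objective \eqref{eq:lp-L1-objective} is linear with fixed coefficients \(\omega_{J,\tau}\). Hence the problem is a linear program.

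\emph{Nonnegativity and attainment.} Every term \(\omega_{J,\tau}(r^+_{J,\tau}+r^-_{J,\tau})\) is nonnegative, so the optimal value, if it exists, is at least \(0\). Attainment is the step needing care, and I expect it to be the main (minor) obstacle. My plan is to use that the feasible set in \(w\) is compact: it is nonnegative and bounded by \eqref{eq:lp-admiss}. Once \(w\) is fixed, each slack pair can be replaced by \(r^+=\max(\cdot,0)\) and \(r^-=\max(-\cdot,0)\) without increasing the objective. So the infimum equals the minimum of a continuous function over a compact set, and it is attained whenever that set is nonempty. If the \(w\)-constraints are infeasible altogether, the LP has no optimal value, and the target is trivially not realizable, because realizability requires those same constraints. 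I would state this convention explicitly and read ``optimal value zero'' as including feasibility.

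\emph{The equivalence.} Suppose the target is exactly realizable. Proposition~\ref{prop:lp-feasibility} gives \(w\) satisfying \eqref{eq:lp-nonneg}--\eqref{eq:lp-admiss}. Setting \(r^\pm\equiv0\) satisfies \eqref{eq:lp-slack} and gives objective \(0\), so the minimum is \(0\).

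Conversely, suppose an optimal solution \((w,r^\pm)\) has objective \(0\). Since each \(\omega_{J,\tau}>0\) and \(r^\pm\ge0\), every slack vanishes. Then \eqref{eq:lp-slack} reduces to \eqref{eq:lp-tail}, so \(w\) satisfies \eqref{eq:lp-nonneg}--\eqref{eq:lp-admiss}. Proposition~\ref{prop:lp-feasibility} then gives exact realizability at scale \(p_0\). This direction is where strict positivity of the calibration weights is essential.
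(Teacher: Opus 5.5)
Your proposal is correct and follows essentially the same route as the paper: linearity of the objective and constraints, then vanishing slacks (using $\omega_{J,\tau}>0$ and $r^\pm_{J,\tau}\ge 0$) in one direction and a realizing $w$ with $r^\pm\equiv 0$ in the other, both tied to the feasibility characterization of Proposition~\ref{prop:lp-feasibility}. The only addition is your compactness argument that the optimum is attained, which the paper leaves implicit (it reads ``optimal value zero'' directly as an optimal solution with zero objective); your infeasibility caveat is harmless but never applies, since for $p_0<\tfrac12$ the two generators with $I=[d]$ and $\sigma\equiv\LL$, respectively $\sigma\equiv\UU$, each of unit weight, satisfy the margin constraints with total weight $2\le 1/p_0$.
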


\begin{proof}
The objective \eqref{eq:lp-L1-objective} is linear in the slack variables, and
all constraints are linear in the optimization variables
\(
\bigl(w_{I,\sigma}, r_{J,\tau}^+, r_{J,\tau}^-\bigr).
\)
Hence the problem is a linear program.

If the optimal value is zero, then
\(
\sum_{(J,\tau)\in\mathcal S}
\omega_{J,\tau}\bigl(r_{J,\tau}^+ + r_{J,\tau}^-\bigr)=0.
\)
Since each \(\omega_{J,\tau}\) is strictly positive and each slack variable is
nonnegative, it follows that
\(r_{J,\tau}^+=r_{J,\tau}^-=0\) for every \((J,\tau)\in\mathcal S\).
Therefore the exact tail equations hold, together with the margin and
admissibility constraints, so the target specification is exactly realizable
within the witness class at scale \(p_0\).

Conversely, if the target specification is exactly realizable, one may choose a
feasible witness weight system satisfying the exact tail equations and set all
slack variables equal to zero. This yields objective value zero.
\end{proof}

\begin{remark}
The weighted \(\ell^1\)-formulation is attractive because it preserves linearity
and allows the user to emphasize coefficients of greater practical importance.
Strictly positive calibration weights ensure that zero objective value is
equivalent to exact realizability, while larger weights may be assigned to
coefficients regarded as more important in a given application.
\end{remark}

\subsection{Secondary selection criteria}

Whenever the feasible set is nonempty, one may impose additional linear
selection criteria reflecting modelling preferences or practical priorities.

A simple example is the minimization of total noncentral witness mass:
\[
\min \sum_{(I,\sigma)\in\mathcal G_d} w_{I,\sigma}.
\]
This tends to favor witness realizations with larger central mass
\[
q_{\MM^d}
=
1-p_0\sum_{(I,\sigma)\in\mathcal G_d} w_{I,\sigma}.
\]

More generally, given nonnegative costs \(c_{I,\sigma}\), one may solve
\[
\min \sum_{(I,\sigma)\in\mathcal G_d} c_{I,\sigma} w_{I,\sigma}
\]
subject to the same feasibility constraints. This permits, for instance,
preferential penalization of higher-order generators or of sign configurations
considered less plausible in a given application.

\begin{remark}
Such secondary objectives remain within linear programming as long as the
criterion is linear in the witness weights. This makes it possible to encode
expert judgement directly at the level of interpretable joint-behaviour
generators.
\end{remark}

\subsection{Scope of the LP layer}

The LP layer begins only after the exact complete-case inversion has exhausted
what can be done algebraically. Its basic role is therefore not to replace the
triangular witness inversion, but to extend the same witness parametrization to
partial, noisy, and inconsistent targets.

A particularly important nonlinear second-stage selector is maximum entropy on
the ternary cell masses q; compare the general convex-optimization viewpoint in
\cite{BoydVandenberghe2004} and the maximum-entropy perspective in
\cite{Jaynes2003}. Since \(q\) depends linearly on \(w\) and the Shannon
entropy \(-\sum_a q_a\log q_a\) is strictly concave on the simplex, the
entropy-maximizing feasible ternary profile, when it exists, is unique. After
fixing the canonical geometric generators of
Proposition~\ref{prop:canonical-generators}, this yields a distinguished finite-
\(p_0\) witness copula among all LP-feasible realizations. A simple alternative
is the order-\(2\) Rényi entropy~\citep{Renyi1961}, whose maximization is
equivalent to minimizing \(\sum_a q_a^2\) under the same linear constraints,
hence to a convex quadratic-programming problem. We do not make this nonlinear
layer part of the core solver, but the resulting copula selection principle and
the Shannon-based uniqueness consequence are important to keep in view.

\section{Simulation, benchmark, and computational validation}
\label{sec:simulation}

This section illustrates and validates the constructive line of the paper. Once a
feasible weight system $w$ has been obtained, it determines both a finite-$p_0$
ternary realization through the cell masses $q_a$ and a canonical simulation
mechanism for the corresponding witness copula. We then use a compact
five-dimensional benchmark to check the full pipeline
$\lambda \to w \to q \to$ simulation.

\subsection{Canonical sampling and numerical diagnostics}

Fix a feasible witness weight system $w$ and a scale $p_0\in(0,1/2)$. Define
\[
\pi_{I,\sigma}:=p_0 w_{I,\sigma},
\qquad
\pi_0:=1-p_0\sum_{(I,\sigma)\in\mathcal G_d} w_{I,\sigma}.
\]
Then $\pi_0$ and the $\pi_{I,\sigma}$ form a probability distribution over the
central component and the noncentral generators. Using the canonical geometric
generators from Proposition~\ref{prop:canonical-generators}, exact simulation is
therefore immediate: draw the component label with these probabilities and then
sample from the corresponding canonical generator or from the central
component.

\begin{proposition}[Canonical witness sampler]
\label{prop:witness-sampler-correct}
The two-stage mixture sampler described above generates exactly the witness
copula determined by $w$ and $p_0$.
\end{proposition}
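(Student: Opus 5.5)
The plan is to identify the law of the sampler's output with the witness copula, which is by definition the finite mixture
\[
\mu_{w,p_0}=\pi_0\,\mathcal L\bigl(U^{(0)}\bigr)+\sum_{(I,\sigma)\in\mathcal G_d}\pi_{I,\sigma}\,\mathcal L\bigl(U^{(I,\sigma)}\bigr)
\]
of the canonical components from Proposition~\ref{prop:canonical-generators}. Since the formal shell of Section~\ref{sec:formal-shell} is not yet available, I will take this mixture as the working definition of ``the witness copula determined by \(w\) and \(p_0\)''.

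The first step is to check that the labels are drawn from a genuine probability distribution. Each \(\pi_{I,\sigma}=p_0w_{I,\sigma}\) is nonnegative because \(w\) is nonnegative. The central weight \(\pi_0=m_0\) is nonnegative by feasibility, via Corollary~\ref{cor:admissibility-q}. The weights sum to \(1\) by Proposition~\ref{prop:total-probability-q}.

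The second step is the law of total probability. Let \(K\) be the label drawn in the first stage and \(X\) the output. For any Borel set \(B\subseteq[0,1]^d\), conditioning on \(K\) gives
\[
\Pr(X\in B)=\sum_k \Pr(K=k)\,\Pr(X\in B\mid K=k).
\]
Conditionally on \(K=k\), the sampler draws fresh independent \(Z\) and \(V_i\) and applies the map of Proposition~\ref{prop:canonical-generators}. So \(\Pr(X\in B\mid K=k)\) equals the probability that the \(k\)-th canonical component lies in \(B\), and the sum is exactly \(\mu_{w,p_0}(B)\). This is the whole identification.

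The third step confirms that \(\mu_{w,p_0}\) really is a copula, meaning it has uniform margins. This follows from Proposition~\ref{prop:exact-margins}, or equivalently Corollary~\ref{cor:singleton-normalization}, since a feasible \(w\) satisfies singleton normalization. As a consistency check, the cell masses of the sampled law reproduce Proposition~\ref{prop:w-to-q}: each canonical component is supported in its own cell \(B^{(p_0)}_{I,\sigma}\), and these cells are disjoint modulo null boundaries.

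I expect the main obstacle to be definitional rather than technical: being precise about what ``the witness copula'' means before Section~\ref{sec:formal-shell}. I would handle this by stating the mixture definition explicitly, so the proof reduces to the conditioning identity above. I would also note that the boundary overlaps of the intervals are null events under every component.
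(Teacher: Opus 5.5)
Your proposal is correct and matches the paper's proof, which simply observes that the witness copula is by construction the mixture of the central component and the canonical generators with weights \(\pi_0\) and \(\pi_{I,\sigma}\), so drawing the label first and then sampling the selected component reproduces that law. Your additional checks, that the label weights form a probability vector and that the mixture has uniform margins by singleton normalization, only make explicit what the paper leaves implicit in calling \(w\) feasible.
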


\begin{proof}
By construction, the witness copula is precisely the mixture of the central
component and the canonical generators with weights $\pi_0$ and $\pi_{I,\sigma}$.
Sampling the label first and then sampling conditionally from the selected
component reproduces this mixture law exactly.
\end{proof}

For numerical checks, let $U^{(1)},\dots,U^{(M)}$ be simulated samples. For any
nonempty $J\subseteq[d]$, sign pattern $\tau\in\{\LL,\UU\}^{J}$, and threshold
$p\in(0,1/2)$, we use the empirical signed tail estimator
\[
\widehat{\lambda}_{J,\tau}^{(M)}(p)
:=
\frac{1}{Mp}
\sum_{m=1}^M
\prod_{j\in J}
\mathbf 1\!\left\{U_j^{(m)}\in T_{\tau_j}(p)\right\},
\]
where $T_{\LL}(p)=[0,p]$ and $T_{\UU}(p)=[1-p,1]$. In practice it is enough to
check consistency across three layers: recovered witness weights $w$, induced
ternary masses $q$, and empirical signed tail behaviour under simulation.

\subsection{Five-dimensional benchmark}
\label{sec:benchmark}

We now use a compact five-dimensional benchmark that plays two roles. First, it
provides a nontrivial complete signed specification for which the witness
weights can be recovered analytically. Second, it serves as a solver benchmark,
since the same target family can be treated both by triangular inversion and by
the LP formulations of Section~\ref{sec:lp}. The construction extends the
sparse upper-tail benchmark first presented in~\cite{MB11} and later used
in~\citet*{EHW2016}.

\subsubsection{Benchmark specification}

Fix $d=5$ and let $\alpha\ge 0$ be a tuning parameter.

\paragraph{Singleton coefficients.}
For each coordinate \(i\in[5]\), \(\lambda_{\{i\},\LL}=1\) and
\(\lambda_{\{i\},\UU}=1\).

\paragraph{Pair coefficients.}
For signs \(s,t\in\{\LL,\UU\}\), write
\[
(\Lambda^{st})_{ij}:=\lambda_{\{i,j\},(s,t)}, \qquad 1\le i<j\le 5,
\]
and display only the upper-triangular entries, marking the diagonal and all
lower-triangular positions by dots because they are not part of the displayed
data. The singleton normalization is kept separately. We set
\[
\Lambda^{\UU\UU}=\Lambda^{\LL\LL}=
\begin{pmatrix}
\cdot & 0 & 0 & 0 & \alpha\\
\cdot & \cdot & 0 & 0 & \alpha\\
\cdot & \cdot & \cdot & 0 & \alpha\\
\cdot & \cdot & \cdot & \cdot & \alpha\\
\cdot & \cdot & \cdot & \cdot & \cdot
\end{pmatrix},
\qquad
\Lambda^{\UU\LL}=\Lambda^{\LL\UU}=
\begin{pmatrix}
\cdot & 1 & 0 & 0 & \alpha\\
\cdot & \cdot & 0 & 0 & \alpha\\
\cdot & \cdot & \cdot & 1 & \alpha\\
\cdot & \cdot & \cdot & \cdot & \alpha\\
\cdot & \cdot & \cdot & \cdot & \cdot
\end{pmatrix}.
\]
Thus the only nonzero signed pair coefficients among the first four
variables are the opposite-sign pairs \(\{1,2\}\) and \(\{3,4\}\), both
equal to \(1\), while every pair involving coordinate \(5\) carries the common
value \(\alpha\) in all four sign configurations.

\paragraph{Third-order coefficients.}
For \(t\in\{\LL,\UU\}\), the only nonzero signed triple coefficients are
\[
\lambda_{\{1,2,5\},(\UU,\LL,t)}
=\lambda_{\{1,2,5\},(\LL,\UU,t)}
=\lambda_{\{3,4,5\},(\UU,\LL,t)}
=\lambda_{\{3,4,5\},(\LL,\UU,t)}
=\alpha.
\]
All other signed triple coefficients are zero, and all signed coefficients of
orders \(4\) and \(5\) are set to zero.

\begin{remark}
The benchmark is complete in the sense of Section~\ref{sec:tailmap}: all
singleton coefficients are fixed, all signed coefficients of orders $2$ and $3$
are specified, and all higher-order coefficients are prescribed to be zero.
\end{remark}

\subsubsection{Exact recovery of the witness weights}

\begin{proposition}[Explicit witness solution for the benchmark]
\label{prop:benchmark-explicit-w}
For the five-dimensional benchmark above, triangular inversion yields the
following nonzero witness weights:
\[
\begin{array}{@{}l@{\;}c@{\;}l@{\;}c@{\;}l@{\;}c@{\;}l@{\;}c@{\;}l@{}}
w_{\{1,2,5\},(\UU,\LL,\UU)} & = & w_{\{1,2,5\},(\UU,\LL,\LL)} & = & w_{\{1,2,5\},(\LL,\UU,\UU)} & = & w_{\{1,2,5\},(\LL,\UU,\LL)} & = & \alpha,
\\
w_{\{3,4,5\},(\UU,\LL,\UU)} & = & w_{\{3,4,5\},(\UU,\LL,\LL)} & = & w_{\{3,4,5\},(\LL,\UU,\UU)} & = & w_{\{3,4,5\},(\LL,\UU,\LL)} & = & \alpha,
\\
w_{\{1,2\},(\UU,\LL)} & = & w_{\{1,2\},(\LL,\UU)} & = & w_{\{3,4\},(\UU,\LL)} & = & w_{\{3,4\},(\LL,\UU)} & = & 1-2\alpha,
\\
w_{\{5\},\UU} & = & w_{\{5\},\LL} &   &   &   &   & = & 1-4\alpha.
\end{array}
\]
All remaining witness weights are zero.
\end{proposition}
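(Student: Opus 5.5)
The plan is to run the backward recovery of Proposition~\ref{prop:triangular-inversion} layer by layer, from $|I|=5$ down to $|I|=1$. Uniqueness in that proposition means it is enough to produce weights satisfying $\lambda=Aw$ for the benchmark family. Alternatively, I can propose the displayed $w$ and check the forward incidence of Theorem~\ref{thm:w-to-lambda} on every target $(J,\tau)$; I will do the recursion and use the forward check as confirmation.

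\emph{Orders $5$ and $4$.} All signed coefficients of these orders are zero. For $|I|=5$ the recursion gives $w_{I,\sigma}=\lambda_{I,\sigma}=0$. For $|I|=4$ the correction terms involve only order-$5$ weights, which are zero, so again $w_{I,\sigma}=0$.

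\emph{Order $3$.} Here $w_{I,\sigma}=\lambda_{I,\sigma}$, since all higher weights vanish. This gives the eight weights equal to $\alpha$ on $\{1,2,5\}$ and $\{3,4,5\}$ with opposite signs on the first two coordinates and a free sign $t$ on coordinate $5$. All other triples are zero.

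\emph{Order $2$.} For each pair $(I,\sigma)$ I will subtract the sum of order-$3$ weights extending it. The cases are:
\begin{itemize}
\item $\{1,2\}$ with $(\UU,\LL)$: the extensions are $\{1,2,5\}$ with $t\in\{\LL,\UU\}$, giving $1-2\alpha$. The same holds for $(\LL,\UU)$ and for $\{3,4\}$.
\item $\{1,2\}$ with equal signs: there are no extensions and the target is $0$, so the weight is $0$.
\item $\{i,5\}$ with $i\le4$ and any signs $(s,t)$: exactly one order-$3$ generator extends it, namely the one with the unique opposite-sign completion on the partner coordinate. For example, $(1,5;\UU,t)$ extends only to $(125;\UU\LL t)$. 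The weight is therefore $\alpha-\alpha=0$.
\item The remaining pairs: the target is zero and there are no nonzero extensions, so the weight is $0$.
\end{itemize}

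\emph{Order $1$.} For $w_{\{5\},t}$ I subtract from $1$ all pair and triple weights with coordinate $5$ carrying sign $t$. The pair weights are zero and the triple weights contribute $4\alpha$, giving $1-4\alpha$. For $w_{\{1\},\UU}$ the subtracted terms are $w_{12,\UU\LL}=1-2\alpha$ and the two triples $(125;\UU\LL t)$, which contribute $2\alpha$, so the weight is $0$. Coordinates $2,3,4$ and both signs go the same way by symmetry.

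I expect the main obstacle to be bookkeeping rather than any conceptual difficulty. The step most prone to error is the order-$2$ cancellation for the pairs involving coordinate $5$, where one must confirm that exactly one order-$3$ generator extends each signed pair. I will organise this by the symmetry $1\leftrightarrow2$, $3\leftrightarrow4$, $\{1,2\}\leftrightarrow\{3,4\}$ combined with the global sign flip $\LL\leftrightarrow\UU$. Under these symmetries a handful of representative cases suffice.
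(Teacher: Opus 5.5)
Your proposal is correct and follows the same route as the paper. Both run descending triangular back-substitution: zero weights at orders $5$ and $4$, the eight order-$3$ weights equal to $\alpha$, the residual $1-2\alpha$ on the opposite-sign pairs in $\{1,2\}$ and $\{3,4\}$, and singleton residuals $1-4\alpha$ on coordinate $5$ and $0$ elsewhere. You also check explicitly that each signed pair $\{i,5\}$, whose target is $\alpha$, is cancelled by exactly one \emph{nonzero} order-$3$ weight; this spells out a step the paper compresses into ``all other pair weights remain zero.''
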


\begin{proof}
All coefficients of orders $4$ and $5$ vanish, so the corresponding weights
vanish as well. At order $3$, there are no nonzero supersets left, hence each
nonzero triple coefficient becomes the corresponding triple weight, giving the
eight weights equal to $\alpha$ above. At order $2$, each nonzero opposite-sign
pair on $\{1,2\}$ or $\{3,4\}$ loses exactly two triple contributions of size
$\alpha$, so its residual pair weight is $1-2\alpha$; all other pair weights
remain zero. Finally, the singleton coefficients for variables $1,2,3,4$ are
already exhausted by pair and triple contributions, while coordinate $5$
retains two singleton weights $1-4\alpha$, one in each sign. This is exactly
the displayed solution.
\end{proof}

\begin{corollary}[Tail-level compatibility interval]
\label{cor:benchmark-alpha-interval}
The benchmark determines a valid tail-level witness parametrization if and only
if
\(
\alpha\in[0,1/4].
\)
\end{corollary}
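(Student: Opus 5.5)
The corollary will follow from Corollary~\ref{cor:complete-tail-validity}, which says a complete signed tail family defines a valid tail-level witness parametrization exactly when two conditions hold: all recovered weights are nonnegative, and singleton normalization $\lambda_{\{i\},\LL}=\lambda_{\{i\},\UU}=1$ holds. Singleton normalization is imposed by the benchmark specification for every $\alpha$, so it is automatic. The recovered weights are explicit in Proposition~\ref{prop:benchmark-explicit-w}. The whole argument therefore reduces to reading off when these explicit weights are nonnegative.

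By Proposition~\ref{prop:benchmark-explicit-w}, the nonzero recovered weights take only three distinct values:
\begin{itemize}
\item $\alpha$, for the eight triple generators;
\item $1-2\alpha$, for the four opposite-sign pair generators on $\{1,2\}$ and $\{3,4\}$;
\item $1-4\alpha$, for the two singleton generators on coordinate $5$.
\end{itemize}
All other weights are zero. Nonnegativity of all weights is therefore equivalent to $\alpha\ge0$, $1-2\alpha\ge0$, and $1-4\alpha\ge0$. These conditions together are equivalent to $\alpha\in[0,1/4]$.

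For the converse direction, note that the triangular inversion of Proposition~\ref{prop:triangular-inversion} is unique. Hence for $\alpha>1/4$ the recovered weight $w_{\{5\},\UU}=1-4\alpha$ is negative, and no valid parametrization exists. The hypothesis $\alpha\ge0$ is already part of the benchmark's setup. It is still worth stating that $\alpha\ge0$ is also what the triple weights require.

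The one step I would double-check is that Proposition~\ref{prop:benchmark-explicit-w} really lists every recovered weight. A residual singleton weight for coordinates $1$--$4$ could in principle be negative. I would verify this directly from the incidence formula of Theorem~\ref{thm:w-to-lambda}, taking coordinate $1$ with the $\UU$ sign as the representative case. The contributions to $\lambda_{\{1\},\UU}$ are $w_{12,\UU\LL}=1-2\alpha$ and $w_{125,\UU\LL\UU}+w_{125,\UU\LL\LL}=2\alpha$. These sum to $1$, so $w_{\{1\},\UU}=0$. By the same computation, coordinate $5$ receives $4\alpha$ from the triples, which gives $w_{\{5\},\UU}=1-4\alpha$. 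This confirms that the list is complete, and it shows that the binding constraint is $1-4\alpha\ge0$.
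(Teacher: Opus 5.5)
Your proposal is correct and follows the paper's argument. Singleton normalization is built into the benchmark, so by Corollary~\ref{cor:complete-tail-validity} validity reduces to nonnegativity of the explicit weights in Proposition~\ref{prop:benchmark-explicit-w}, namely \(1-2\alpha\ge0\) and \(1-4\alpha\ge0\) together with \(\alpha\ge0\); your check that the singleton weights for coordinates \(1\)--\(4\) vanish is a sound but optional confirmation of that proposition.
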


\begin{proof}
By Proposition~\ref{prop:benchmark-explicit-w}, nonnegativity of the recovered
weights is equivalent to $1-2\alpha\ge 0$ and $1-4\alpha\ge 0$, hence to
$\alpha\in[0,1/4]$.
\end{proof}

\subsubsection{Finite-$p_0$ realization of the benchmark}

For a fixed \(p_0\in(0,1/2)\), the benchmark induces noncentral cell masses
\(q_a=p_0\,w_{I(a),\sigma(a)}\) for \(a\neq \MM^5\), and central mass
\(q_{\MM^5}=1-p_0\sum_{(I,\sigma)\in\mathcal G_5} w_{I,\sigma}\).
Since \(\sum_{(I,\sigma)\in\mathcal G_5} w_{I,\sigma}
=8\alpha+4(1-2\alpha)+2(1-4\alpha)=6-8\alpha\), we obtain
\(q_{\MM^5}=1-p_0(6-8\alpha)\).

\begin{corollary}[Finite-$p_0$ admissibility for the benchmark]
\label{cor:benchmark-p0-admiss}
For a fixed $\alpha\in[0,1/4]$, the benchmark admits a finite-$p_0$ ternary
realization whenever
\(
p_0\le \frac{1}{6-8\alpha}.
\)
\end{corollary}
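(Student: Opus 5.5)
The plan is to apply Corollary~\ref{cor:complete-compatibility} directly to the benchmark. That corollary says a complete signed family is finite-\(p_0\) realizable exactly when three conditions hold: the recovered weights are nonnegative, singleton normalization holds, and \(p_0\sum w_{I,\sigma}\le 1\). So for fixed \(\alpha\in[0,1/4]\) I would check each condition in turn.

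First, nonnegativity. By Proposition~\ref{prop:benchmark-explicit-w}, the recovered nonzero weights are \(\alpha\), \(1-2\alpha\) and \(1-4\alpha\). All three are nonnegative on \([0,1/4]\), which is the content of Corollary~\ref{cor:benchmark-alpha-interval}.

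Second, singleton normalization. The benchmark prescribes \(\lambda_{\{i\},\LL}=\lambda_{\{i\},\UU}=1\) directly, so this holds by construction. As a consistency check I would also verify it from the recovered weights through Theorem~\ref{thm:w-to-lambda}. For coordinate \(1\) with sign \(\UU\), the contributing generators are \(w_{\{1,2\},(\UU,\LL)}\) and the two triples \(w_{\{1,2,5\},(\UU,\LL,t)}\), giving \((1-2\alpha)+2\alpha=1\). Coordinates \(2,3,4\) work the same way by symmetry. For coordinate \(5\) with sign \(\UU\), the contributors are the four triples with \(\sigma_5=\UU\) plus the singleton \(w_{\{5\},\UU}\), giving \(4\alpha+(1-4\alpha)=1\). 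The \(\LL\) side is symmetric. Exact margins then follow from Corollary~\ref{cor:singleton-normalization} and Proposition~\ref{prop:exact-margins}.

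Third, the central-mass condition. The paper computes \(\sum w_{I,\sigma}=6-8\alpha\), which is positive (at least \(4\)) on \([0,1/4]\). So \(q_{\MM^5}=1-p_0(6-8\alpha)\ge 0\) is equivalent to \(p_0\le 1/(6-8\alpha)\), by Corollary~\ref{cor:admissibility-q}. This bound is at most \(1/4<1/2\), so it is compatible with the standing requirement \(p_0\in(0,1/2)\).

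With all three conditions in place, Proposition~\ref{prop:w-to-q} gives the ternary masses and Proposition~\ref{prop:canonical-generators} gives the canonical realization, so the benchmark admits a finite-\(p_0\) ternary realization. Nothing here is a real obstacle; the only step needing care is the bookkeeping in the singleton-normalization check, making sure each coordinate's incidence sum is counted correctly.
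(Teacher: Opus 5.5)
Your proposal is correct and follows the paper's route. The paper's proof only notes that, given tail-level validity for $\alpha\in[0,1/4]$ (Corollary~\ref{cor:benchmark-alpha-interval}) and $\sum w_{I,\sigma}=6-8\alpha$, the condition is exactly $q_{\MM^5}=1-p_0(6-8\alpha)\ge 0$, the central-mass requirement of Corollary~\ref{cor:complete-compatibility}. Your explicit singleton-normalization check and the observation $1/(6-8\alpha)\le 1/4<1/2$ spell out steps the paper leaves implicit.
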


\begin{proof}
This is exactly the condition $q_{\MM^5}\ge 0$.
\end{proof}

\begin{remark}
The benchmark separates the two levels cleanly: the tail-level compatibility
interval $\alpha\in[0,1/4]$ is $p_0$-free, while the finite realization level
re-enters only through the nonnegativity of the central cell mass.
\end{remark}

The same formula also makes the distinction between tail-level validity and
finite-$p_0$ admissibility transparent. For instance, at the larger scale
$p_0=0.20$,
\(
q_{\MM^5}=1-0.2(6-8\alpha)=-0.2+1.6\alpha,
\)
so the tail-level feasible interval $\alpha\in[0,1/4]$ shrinks at the finite
realization level to $\alpha\in[1/8,1/4]$. Thus $\alpha=0.10$ is tail-level
compatible but not admissible at $p_0=0.20$, whereas $\alpha=0.20$ is
admissible at both levels.

\subsection{Computational validation}
\label{sec:experiments}

The benchmark provides an exact reference boundary, so the computational checks are compact.

Table~\ref{tab:benchmark-complete} reports a small grid of values for the
same five-dimensional benchmark at the fixed realization scale $p_0=0.10$,
now with Monte Carlo diagnostics evaluated at the two thresholds
$p=p_0$ and $p=p_0/2=0.05$. Direct inversion and LP feasibility agree
throughout. Monte Carlo entries are reported only for the complete-case
feasible rows and summarize, over repeated canonical-sampling runs, the mean
and standard deviation of the per-run maximum absolute deviation across the
nonzero benchmark target tail coefficients of orders $2$ and $3$; singleton
coefficients are checked separately as exact-margin sanity tests. The
additional $p_0/2$ columns test the same benchmark against the fixed-scale
invariance result from Section~\ref{subsec:fixed_scale_invariance}, without introducing a
separate test family. As expected, the smaller threshold is slightly noisier,
but the error levels remain of the same order. For $\alpha=0.26$, the central
mass is still positive at this scale, but the target is already tail-level
infeasible because $w_{\{5\},\UU}=w_{\{5\},\LL}=1-4\alpha<0$.

\begin{table}[H]
\centering
\caption{Complete-case validation for the five-dimensional benchmark at
$p_0=0.10$. Monte Carlo summaries are based on $R=20$ independent runs,
each with $M=5\times 10^5$ canonical samples. For each run, the reported
Monte Carlo error is the maximum absolute deviation over the nonzero
benchmark target tail coefficients of orders $2$ and $3$ only; singleton
coefficients are checked separately through exact-margin sanity tests.
The same benchmark is evaluated at the two thresholds $p=p_0$ and
$p=p_0/2=0.05$.}
\label{tab:benchmark-complete}
\begin{tabular}{@{}ccccccccc@{}}
\toprule
$\alpha$
& \shortstack[c]{minimum\\ recovered $w$}
& \shortstack[c]{direct\\ inversion}
& \shortstack[c]{LP\\ feasibility}
& $q_{\MM^5}$
& \shortstack[c]{MC mean\\ max error\\ @ $p_0$}
& \shortstack[c]{MC sd\\ max error\\ @ $p_0$}
& \shortstack[c]{MC mean\\ max error\\ @ $p_0/2$}
& \shortstack[c]{MC sd\\ max error\\ @ $p_0/2$} \\
\midrule
$0.00$ & $0.00$  & feasible   & feasible   & $0.400$ & 0.0057 & 0.0027 & 0.0091 & 0.0028 \\
$0.10$ & $0.00$  & feasible   & feasible   & $0.480$ & 0.0062 & 0.0025 & 0.0078 & 0.0031 \\
$0.20$ & $0.00$  & feasible   & feasible   & $0.560$ & 0.0065 & 0.0023 & 0.0099 & 0.0027 \\
$0.24$ & $0.00$  & feasible   & feasible   & $0.592$ & 0.0058 & 0.0021 & 0.0088 & 0.0038 \\
$0.25$ & $0.00$  & feasible   & feasible   & $0.600$ & 0.0071 & 0.0021 & 0.0084 & 0.0032 \\
$0.26$ & $-0.04$ & infeasible & infeasible & $0.608$ & --     & --     & --     & --     \\
\bottomrule
\end{tabular}
\end{table}

Beyond the complete exact case, two short LP experiments show how the same
witness parametrization handles partial and inconsistent targets. For
$\alpha=0.20$, prescribing only singleton coefficients and the four
opposite-sign pair coefficients admits much sparser completions than the
benchmark witness system, and minimizing the total noncentral weight selects such a sparse completion automatically. For the inconsistent target $\alpha=0.26$, the weighted
$\ell^1$ repair LP from Proposition~\ref{prop:lp-approx} returns a feasible
repaired witness system with exact singleton normalization and nonnegative
central mass at $p_0=0.10$. These experiments complete the computational
picture: exact validation by inversion and LP agreement, simulation as a
numerical sanity check, and LP-based completion or repair beyond the complete
exact regime. In practical calibration, $p_0$ may then be chosen below the
admissibility bound from Section~\ref{sec:asymptotic}; here it is kept fixed
only to separate the computational layer from the later asymptotic analysis.

This concludes the operational line of the paper. It remains only to record
formally that the same finite-$p_0$ construction defines a copula.

\section{Formal measure-theoretic realization of witness copulas}
\label{sec:formal-shell}

We now record the measure-theoretic shell showing that the finite-\(p_0\)
witness construction defines a copula. No new incidence algebra is used here.

\subsection{Normalized generators and the central component}

The witness construction is formulated at the level of copula measures.

\begin{definition}[Normalized generator]
\label{def:normalized-generator}
For \((I,\sigma)\in\mathcal G_d\), a normalized generator at scale \(p_0\) is a
probability measure \(\mu_{I,\sigma}^{(p_0)}\) on \([0,1]^d\) such that:
\begin{enumerate}
    \item \(\mu_{I,\sigma}^{(p_0)}\) is supported on the signed cell
    \(B_{I,\sigma}^{(p_0)}\);
    \item for each coordinate \(i\in[d]\), the \(i\)-th marginal of
    \(\mu_{I,\sigma}^{(p_0)}\) is the normalized uniform distribution on
    \(A_i^{(I,\sigma)}\), that is, it has density
    \[
    \frac{1}{|A_i^{(I,\sigma)}|}\,\mathbf 1_{A_i^{(I,\sigma)}}
    \]
    with respect to Lebesgue measure on \([0,1]\).
\end{enumerate}
\end{definition}

\begin{definition}[Central component]
\label{def:central-component}
A central component at scale \(p_0\) is a probability measure
\[
\mu_0^{(p_0)}
\]
on \([0,1]^d\) such that:
\begin{enumerate}
    \item \(\mu_0^{(p_0)}\) is supported on the central cell \(B_0^{(p_0)}\);
    \item each one-dimensional marginal of \(\mu_0^{(p_0)}\) is the normalized
    uniform distribution on \(M_{p_0}\), that is, the probability measure with
    density
    \[
    \frac{1}{|M_{p_0}|}\,\mathbf 1_{M_{p_0}}
    \]
    with respect to Lebesgue measure on \([0,1]\).
\end{enumerate}
\end{definition}

\begin{remark}
Definitions~\ref{def:normalized-generator} and~\ref{def:central-component}
isolate the support and marginal facts needed for the measure-theoretic check.
In the witness copulas considered in this paper, however, these measures are
fixed to be the laws of the canonical shared-factor construction in
Proposition~\ref{prop:canonical-generators} and of the corresponding central
component. Thus the freedom of the construction lies in \(w\) and \(p_0\), not
in choosing alternative in-cell geometries.
\end{remark}

\subsection{Tail-scaled generators and witness measure}

\begin{definition}[Tail-scaled generator component]
\label{def:tail-scaled-generator}
For \((I,\sigma)\in\mathcal G_d\), define the associated tail-scaled generator
component by
\[
\widetilde\mu_{I,\sigma}^{(p_0)}
:=
p_0\,\mu_{I,\sigma}^{(p_0)}.
\]
This is a finite nonnegative measure on \([0,1]^d\) of total mass \(p_0\).
\end{definition}

\begin{definition}[Witness copula measure]
\label{def:witness-copula}
Fix the canonical family of normalized generators induced by
Proposition~\ref{prop:canonical-generators}, together with the corresponding
canonical central component.

Given a witness weight system
\(w\) with \(m_0\ge 0\), define the witness measure \(\mu^{(w,p_0)}\) on
\([0,1]^d\) by
\[
\mu^{(w,p_0)}
:=
m_0\,\mu_0^{(p_0)}
+
\sum_{(I,\sigma)\in\mathcal G_d}
w_{I,\sigma}\,\widetilde\mu_{I,\sigma}^{(p_0)}.
\]

If the exact margin conditions of
Proposition~\ref{prop:exact-margins} hold, then \(\mu^{(w,p_0)}\) has
uniform one-dimensional margins on \([0,1]\), and its associated
distribution function
\[
C^{(w,p_0)}(u_1,\dots,u_d)
:=
\mu^{(w,p_0)}
\bigl([0,u_1]\times\cdots\times[0,u_d]\bigr)
\]
is called the geometric witness copula.
\end{definition}

\begin{remark}
The role of \(p_0\) is geometric rather than structural. It determines the
finite ternary realization and the central mass, but the signed tail map
developed in the next section will be shown to be independent of \(p_0\).
\end{remark}

\subsection{Formal copula formula and legal witness realization}

\begin{proposition}[Formal witness copula realization]
\label{prop:formal-witness-copula}

Fix \(p_0\in(0,1/2)\), take the canonical normalized generators and central
component induced by Proposition~\ref{prop:canonical-generators}, and let \(w\)
be a nonnegative witness weight system. If the exact
margin conditions of Proposition~\ref{prop:exact-margins} hold and the central
coefficient
\[
 m_0 = 1-p_0\sum_{(I,\sigma)\in\mathcal G_d} w_{I,\sigma}
\]
is nonnegative, then
\[
\mu^{(w,p_0)}
:=
m_0\,\mu_0^{(p_0)}
+
\sum_{(I,\sigma)\in\mathcal G_d}
 w_{I,\sigma}\,\widetilde\mu_{I,\sigma}^{(p_0)}
\]
is a probability measure on \([0,1]^d\) with uniform one-dimensional margins.
Consequently,
\[
C^{(w,p_0)}(u_1,\dots,u_d)
:=
\mu^{(w,p_0)}\bigl([0,u_1]\times\cdots\times[0,u_d]\bigr)
\]
is a copula.
\end{proposition}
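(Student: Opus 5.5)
The plan is to verify three things in turn: \(\mu^{(w,p_0)}\) is a nonnegative finite measure, it has total mass one, and each one-dimensional marginal is Lebesgue measure on \([0,1]\). The copula property then follows from the standard characterization: the distribution function of a Borel probability measure on \([0,1]^d\) with uniform margins is a copula. Its groundedness, \(d\)-increasingness and uniform univariate margins are immediate from it being the cumulative distribution function of such a measure; this is the converse direction of Sklar's theorem, see \citet{Nelsen2006}.

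\emph{Nonnegativity and total mass.} The canonical laws from Proposition~\ref{prop:canonical-generators} are probability measures, since they are laws of random vectors, so each \(\widetilde\mu_{I,\sigma}^{(p_0)}=p_0\mu_{I,\sigma}^{(p_0)}\) is a nonnegative measure of mass \(p_0\). Because \(w_{I,\sigma}\ge0\) and \(m_0\ge0\), the mixture \(\mu^{(w,p_0)}\) is a finite nonnegative combination of nonnegative measures, hence itself a nonnegative measure. Its total mass is
\[
m_0+p_0\sum_{(I,\sigma)} w_{I,\sigma}=1,
\]
by the definition of \(m_0\) in Definition~\ref{def:wweights}, exactly as in Proposition~\ref{prop:total-probability-q}.

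\emph{Uniform margins.} Fix \(i\in[d]\). Taking marginals is linear in the measure, so the \(i\)-th marginal of \(\mu^{(w,p_0)}\) is the same weighted combination of the \(i\)-th marginals of its components. By Proposition~\ref{prop:canonical-generators}, the \(i\)-th marginal of each component has the following density:
\begin{itemize}
\item a generator with \(i\in I\) and \(\sigma_i=\LL\) contributes \(w_{I,\sigma}\mathbf 1_{L_{p_0}}\), because its mass \(p_0 w_{I,\sigma}\) is spread uniformly over an interval of length \(p_0\);
\item a generator with \(i\in I\) and \(\sigma_i=\UU\) contributes \(w_{I,\sigma}\mathbf 1_{U_{p_0}}\), for the same reason;
\item a generator with \(i\notin I\) contributes \(p_0w_{I,\sigma}(1-2p_0)^{-1}\mathbf 1_{M_{p_0}}\);
\item the central component contributes \(m_0(1-2p_0)^{-1}\mathbf 1_{M_{p_0}}\).
\end{itemize}
This is precisely the computation in the proof of Proposition~\ref{prop:exact-margins}, whose sufficiency direction I would invoke directly. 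Under the exact margin conditions the density equals \(1\) on \(L_{p_0}\) and on \(U_{p_0}\). On \(M_{p_0}\) the density is constant, and the total mass is one, so that constant must be \((1-2p_0)/(1-2p_0)=1\). The endpoints \(p_0\) and \(1-p_0\) are Lebesgue-null, so the overlap of the closed intervals is irrelevant.

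\emph{Main obstacle.} The only delicate point is that the canonical generators are singular with respect to \(d\)-dimensional Lebesgue measure. The margin argument must therefore be phrased through pushforwards under coordinate projections, not through joint densities. The one-dimensional marginals are still absolutely continuous, because each is the law of \(p_0Z\), \(1-p_0Z\), or \(p_0+(1-2p_0)V_i\) with \(Z,V_i\) uniform. Once the argument is phrased this way, the proof reduces to Proposition~\ref{prop:exact-margins} together with the CDF characterization of copulas.
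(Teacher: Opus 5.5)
Your proposal is correct and follows the paper's proof. Both arguments use the same steps: nonnegative mixture coefficients, total mass \(m_0+p_0\sum w_{I,\sigma}=1\), uniform margins via the sufficiency direction of Proposition~\ref{prop:exact-margins}, and then the passage from such a measure to its distribution function; you only spell out, via pushforwards, the marginal-density bookkeeping that the paper leaves to Proposition~\ref{prop:exact-margins}. One minor labelling point: the last step is the elementary characterization of copulas as distribution functions of probability measures on \([0,1]^d\) with uniform margins, not the converse direction of Sklar's theorem, though your argument does not depend on that label.
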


\begin{proof}
The existence of such normalized generators and of the central component is
guaranteed by Proposition~\ref{prop:canonical-generators}. Every coefficient in the defining mixture is
nonnegative, and the total mass is
\(m_0 + p_0\sum w_{I,\sigma}=1\). Proposition~\ref{prop:exact-margins}
provides the exact one-dimensional marginal equalities, so the mixture has
uniform margins on \([0,1]\). The associated distribution function is therefore
a copula.
\end{proof}

This formal realization also shows that geometric witness copulas are not
just a single constructive device but a genuine copula family, and the first
structural closure property is stability under coordinate marginalization. 

\begin{corollary}[Closure under marginalization]
\label{cor:witness-marginalization}
Let \(C^{(d)}=C(w,p_0)\) be a geometric witness copula in dimension \(d\),
and let \(S\subseteq[d]\) be nonempty. Then the \(S\)-marginal of \(C^{(d)}\)
is again a geometric witness copula at the same scale \(p_0\). Moreover, for
every nonempty \(J\subseteq S\) and every \(\tau\in\{L,U\}^J\),
\[
\lambda^{(S)}_{J,\tau}=\lambda^{(d)}_{J,\tau}.
\]
Thus coordinate reduction preserves the signed tail coefficients on the
remaining coordinates.
\end{corollary}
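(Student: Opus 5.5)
We work directly with the mixture representation of Proposition~\ref{prop:formal-witness-copula} and push each component forward under the coordinate projection \(\pi_S:[0,1]^d\to[0,1]^S\). Since projection is linear on measures,
\[
\mu^{(w,p_0)}\circ\pi_S^{-1}
=
m_0\,\bigl(\mu_0^{(p_0)}\circ\pi_S^{-1}\bigr)
+
\sum_{(I,\sigma)\in\mathcal G_d}
w_{I,\sigma}\,\bigl(\widetilde\mu_{I,\sigma}^{(p_0)}\circ\pi_S^{-1}\bigr).
\]
Each projected component is then identified with a canonical object in dimension \(|S|\).

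First, the projected central component is the law of \((p_0+(1-2p_0)V_i)_{i\in S}\), which is the canonical central component in dimension \(|S|\).

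Second, consider a generator \((I,\sigma)\). There are two cases according to whether \(I\cap S\) is empty.

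If \(I\cap S\neq\emptyset\), the projected vector uses the shared factor \(Z\) on \(I\cap S\), with signs \(\sigma|_{I\cap S}\), and independent middle uniforms on \(S\setminus I\). This is exactly the canonical generator \((I\cap S,\sigma|_{I\cap S})\in\mathcal G_S\) of Proposition~\ref{prop:canonical-generators}, and its mass \(p_0\) is preserved.

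If \(I\cap S=\emptyset\), every retained coordinate is an independent uniform on \(M_{p_0}\). The projection is then \(p_0\) times the canonical central component in dimension \(|S|\). This case is the one point requiring care: the shared factor \(Z\) simply disappears, and the component lands in the central cell.

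Grouping terms gives new weights
\[
w^{(S)}_{K,\kappa}:=\sum_{I\cap S=K,\ \sigma|_K=\kappa} w_{I,\sigma},
\qquad (K,\kappa)\in\mathcal G_S,
\]
together with the central coefficient
\[
m_0^{(S)}=m_0+p_0\sum_{I\cap S=\emptyset}w_{I,\sigma}.
\]
These weights are nonnegative, and \(m_0^{(S)}\ge m_0\ge0\). We then check that \(m_0^{(S)}=1-p_0\sum w^{(S)}\), which follows from the total-mass identity.

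The projected measure is a marginal of a copula measure, so its margins are uniform. By Proposition~\ref{prop:exact-margins}, read in dimension \(|S|\), the weights \(w^{(S)}\) therefore satisfy the exact margin conditions. Proposition~\ref{prop:formal-witness-copula} then shows that the \(S\)-marginal is the geometric witness copula \(C(w^{(S)},p_0)\) at the same scale \(p_0\).

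For the coefficient identity, we apply Theorem~\ref{thm:w-to-lambda} in dimension \(|S|\). For \(J\subseteq S\),
\[
\lambda^{(S)}_{J,\tau}
=
\sum_{K\supseteq J,\ \kappa|_J=\tau} w^{(S)}_{K,\kappa}
=
\sum_{(I,\sigma):\ I\cap S\supseteq J,\ \sigma|_J=\tau} w_{I,\sigma}.
\]
Since \(J\subseteq S\), the condition \(I\cap S\supseteq J\) is equivalent to \(I\supseteq J\). The last sum is therefore \(\lambda^{(d)}_{J,\tau}\).

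As a consistency check, Corollary~\ref{cor:full-witness-fixed-scale} gives the same conclusion probabilistically: the event defining \(\lambda^{(p)}_{J,\tau}\) involves only coordinates in \(J\subseteq S\), so it has the same probability under the marginal.
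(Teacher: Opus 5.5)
Your proposal is correct and follows the paper's proof. Both arguments push the witness measure forward under the coordinate projection and regroup the weights as \(w^{(S)}_{K,\kappa}=\sum_{I\cap S=K,\ \sigma|_K=\kappa}w_{I,\sigma}\), absorbing generators with \(I\cap S=\emptyset\) into the projected central component, and then conclude from the incidence formula via \(I\cap S\supseteq J\iff I\supseteq J\) for \(J\subseteq S\). Your version is slightly more explicit than the paper's: it computes \(m_0^{(S)}\) and verifies the exact margin conditions for the projected weights, which the paper leaves implicit.
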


\begin{proof}
Project the witness measure \(\mu(w,p_0)\) from \([0,1]^d\) onto the
coordinates in \(S\). For every nonempty \(K\subseteq S\) and every
\(\rho\in\{L,U\}^K\), define the projected witness weights
\[
\widetilde w_{K,\rho}
:=
\sum_{\substack{(I,\sigma)\in G_d:\\ I\cap S = K,\ \sigma|_K=\rho}}
w_{I,\sigma}.
\]
Generators with \(I\cap S\neq\varnothing\) project to lower-dimensional
normalized generators indexed by \((K,\rho)\), while generators with
\(I\cap S=\varnothing\) project into the lower-dimensional central cell and
are absorbed into the projected central component, so admissibility is
preserved. Hence the projected measure is again a witness measure at the same
scale \(p_0\), with witness system \(\widetilde w\).

Now let \(\varnothing\neq J\subseteq S\) and \(\tau\in\{L,U\}^J\). By the
incidence formula of Theorem~1,
\[
\lambda^{(S)}_{J,\tau}
=
\sum_{K\supseteq J}\ \sum_{\rho|_J=\tau}\widetilde w_{K,\rho}.
\]
Substituting the definition of \(\widetilde w_{K,\rho}\) and using that
\(K=I\cap S\) and \(\rho=\sigma|_K\), we see that, for \(J\subseteq S\), the
conditions \(K\supseteq J\) and \(\rho|_J=\tau\) are equivalent to
\(I\supseteq J\) and \(\sigma|_J=\tau\). Therefore
\[
\lambda^{(S)}_{J,\tau}
=
\sum_{I\supseteq J}\ \sum_{\sigma|_J=\tau} w_{I,\sigma}
=
\lambda^{(d)}_{J,\tau}.
\]
Thus the \(S\)-marginal is again a geometric witness copula at scale \(p_0\),
and the signed tail coefficients on the remaining coordinates are preserved.
\end{proof}

\section{Asymptotic analysis}
\label{sec:asymptotic}

Section~\ref{sec:margins} already established the key fixed-scale invariance
property of the canonical ray witness: once realized at scale \(p_0\), the same
complete signed tail family is preserved throughout the whole threshold range
\(0<p\le p_0\). The role of the present section is different, and it does not
introduce a new threshold-dependent theory of \(\lambda(p)\). We now record the
complete-case finite-threshold synthesis statement, identify the admissible
scale range, and formulate the vanishing-threshold viewpoint in which the same
\(p_0\)-free complete signed family is realized along families with
\(p_0\downarrow0\).

\subsection{Finite-threshold recovery on the ternary partition}

Fix $p\in(0,1/2)$, let $C$ be a copula on $[0,1]^d$, and let $U=(U_1,\dots,U_d)\sim C$. In this subsection, $p$ denotes an arbitrary finite threshold attached to the given copula $C$, whereas $p_0$ remains reserved for the geometric realization parameter of the witness construction. For each ternary state
\[
a\in\{L,M,U\}^d,
\]
let $B_a^{(p)}\subset[0,1]^d$ denote the corresponding ternary cell at threshold $p$, obtained by assigning in each coordinate the interval $L_p=[0,p]$, $M_p=[p,1-p]$, or $U_p=[1-p,1]$ according to the state $a$, with overlaps only at endpoints and hence as a ternary partition modulo null sets. Write
\[
q_a^{(p)}:=\Pr\!\bigl(U\in B_a^{(p)}\bigr),\qquad a\in\{L,M,U\}^d,
\]
for the ternary cell masses induced by the threshold $p$. For every nonempty $J\subseteq[d]$ and every $\tau\in\{L,U\}^J$, define the complete finite-threshold signed tail family by
\[
\lambda_{J,\tau}^{(p)}(C)
:=
\frac{1}{p}
\sum_{a\in T_{J,\tau}} q_a^{(p)}.
\]
Here \(\lambda^{(p)}(C)\) is an auxiliary finite-threshold readout of one fixed
copula \(C\) at one chosen threshold \(p\). It is not an additional modelling
object in this paper; the primary object remains the \(p_0\)-free complete signed
tail family \(\lambda\).

\begin{proposition}[Finite-threshold inversion and rescaling]
\label{prop:finite-p-moebius-recovery}
Let \(\lambda^{(p)}(C)=\bigl(\lambda_{J,\tau}^{(p)}(C)\bigr)\) be the complete
finite-threshold signed tail family induced by a copula \(C\) at scale \(p\), and let
\(w^{(p)}\) be the unique coefficient system recovered from
\(\lambda^{(p)}(C)\) by triangular inversion in
Proposition~\ref{prop:triangular-inversion}. Then for every
\((I,\sigma)\in\mathcal G_d\),
\[
w_{I,\sigma}^{(p)}
=
\frac{q_{a(I,\sigma)}^{(p)}}{p},
\]
where \(a(I,\sigma)\) is the unique noncentral ternary state with active set
\(I\) and sign pattern \(\sigma\).

Consequently,
\[
w_{I,\sigma}^{(p)}\ge 0
\qquad\text{for all }(I,\sigma)\in\mathcal G_d,
\]
the singleton normalization holds,
\[
\lambda_{\{i\},\LL}^{(p)}(C)=1,
\qquad
\lambda_{\{i\},\UU}^{(p)}(C)=1,
\qquad i=1,\dots,d,
\]
and
\[
p\sum_{(I,\sigma)\in\mathcal G_d} w_{I,\sigma}^{(p)}
=
\sum_{a\neq \MM^d} q_a^{(p)}
\le 1.
\]
\end{proposition}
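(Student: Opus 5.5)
The plan is to exhibit a candidate coefficient system built directly from the cell masses of \(C\), check that it reproduces \(\lambda^{(p)}(C)\) under the incidence map, and then invoke uniqueness of the triangular inversion. Define
\[
\widehat w_{I,\sigma}:=q^{(p)}_{a(I,\sigma)}/p,\qquad (I,\sigma)\in\mathcal G_d,
\]
and show that \(A\widehat w=\lambda^{(p)}(C)\), with \(A\) the signed incidence matrix of Theorem~\ref{thm:w-to-lambda}. Because \(A\) is unitriangular, Proposition~\ref{prop:triangular-inversion} gives a unique solution of \(\lambda=Aw\). Hence \(w^{(p)}=\widehat w\), which is the first claim.

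The key step is the identity \(A\widehat w=\lambda^{(p)}(C)\). I would reuse the combinatorics from the proof of Proposition~\ref{prop:tail-event-masses}, but now applied to an arbitrary copula \(C\) rather than to a witness realization.

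\begin{itemize}
\item For nonempty \(J\) and \(\tau\in\{\LL,\UU\}^J\), every \(a\in T_{J,\tau}\) has \(a|_J=\tau\in\{\LL,\UU\}^J\). So \(J\subseteq I(a)\), and in particular \(a\neq\MM^d\).
\item Under the bijection \(a\leftrightarrow(I(a),\sigma(a))\), the set \(T_{J,\tau}\) corresponds exactly to the pairs \((I,\sigma)\) with \(I\supseteq J\) and \(\sigma|_J=\tau\).
\item Therefore
\[
\lambda^{(p)}_{J,\tau}(C)=\frac1p\sum_{I\supseteq J}\sum_{\sigma|_J=\tau}q^{(p)}_{a(I,\sigma)}=\sum_{I\supseteq J}\sum_{\sigma|_J=\tau}\widehat w_{I,\sigma}.
\]
\end{itemize}

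This step uses only the definition of \(\lambda^{(p)}(C)\) as a normalized sum of cell masses. It does not use any witness geometry, and I expect it to be the main, though mild, point to state carefully. I would also note that the ternary cells at threshold \(p\) overlap only on null sets, because the margins of \(C\) are uniform. Hence the \(q^{(p)}_a\) are well defined and sum to \(1\).

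The consequences then follow directly.

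\begin{itemize}
\item \textbf{Nonnegativity.} \(w^{(p)}_{I,\sigma}\ge0\) because the \(q^{(p)}_a\) are probabilities.
\item \textbf{Singleton normalization.} Since \(\bigcup_{a\in T_{\{i\},\LL}}B^{(p)}_a=\{U_i\in[0,p]\}\) up to null sets, we get \(\lambda^{(p)}_{\{i\},\LL}(C)=p^{-1}\Pr(U_i\le p)=1\) by uniformity of the margins. The same argument gives \(\lambda^{(p)}_{\{i\},\UU}(C)=1\).
\item \textbf{Mass bound.} Summing \(\widehat w\) over \(\mathcal G_d\) and using the bijection with noncentral cells gives
\[
p\sum_{(I,\sigma)\in\mathcal G_d} w^{(p)}_{I,\sigma}=\sum_{a\neq\MM^d}q^{(p)}_a=1-q^{(p)}_{\MM^d}\le1.
\]
\end{itemize}
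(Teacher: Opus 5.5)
Your proposal is correct and follows the paper's proof essentially step for step. You define the candidate weights $q^{(p)}_{a(I,\sigma)}/p$, verify $\lambda^{(p)}(C)=A\widetilde w$ via the bijection between $T_{J,\tau}$ and the signed extensions of $(J,\tau)$, conclude by uniqueness of the triangular inversion, and read off the three consequences from nonnegativity of probabilities, uniform margins, and total mass one. Your extra remarks are correct and fill in details the paper leaves implicit: that $T_{J,\tau}$ never contains the central cell, and that cell boundaries are null sets under uniform margins.
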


\begin{proof}
Define
\[
\widetilde w_{I,\sigma}:=\frac{q_{a(I,\sigma)}^{(p)}}{p},
\qquad (I,\sigma)\in\mathcal G_d.
\]
By the definition of the support sets \(T_{J,\tau}\), we have
\[
\lambda_{J,\tau}^{(p)}(C)
=
\frac{1}{p}\sum_{a\in T_{J,\tau}} q_a^{(p)}
=
\sum_{I\supseteq J}
\sum_{\sigma:\,\sigma|_J=\tau}
\widetilde w_{I,\sigma}.
\]
Thus \(\lambda^{(p)}(C)=A\widetilde w\), where \(A\) is the same incidence map as
in Theorem~\ref{thm:w-to-lambda}. By the uniqueness statement in
Proposition~\ref{prop:triangular-inversion}, triangular inversion of the
complete system recovers exactly \(\widetilde w\). This gives the stated
identity
\(w_{I,\sigma}^{(p)}=q_{a(I,\sigma)}^{(p)}/p\),
that is, the fixed-threshold rescaling of the noncentral masses.

Nonnegativity follows because each \(q_a^{(p)}\) is a probability mass. The
singleton equalities follow from the exact one-dimensional margins of a copula.
Finally,
\[
p\sum_{(I,\sigma)\in\mathcal G_d} w_{I,\sigma}^{(p)}
=
\sum_{a\neq \MM^d} q_a^{(p)}
\le 1,
\]
because the noncentral ternary cells carry at most total mass \(1\).
\end{proof}

The next theorem characterizes exact synthesis precisely for complete
finite-threshold data.

\begin{theorem}[Complete Finite-Threshold Synthesis Theorem]
\label{thm:complete-finite-p-synthesis}
Let \(p\in(0,1/2)\), let \(\lambda=\bigl(\lambda_{J,\tau}\bigr)\) be a complete
signed tail family, and let \(w\) be the coefficient system recovered from
\(\lambda\) by triangular inversion. Then the following are equivalent:
\begin{enumerate}
\item \(\lambda\) is the complete finite-threshold signed tail family of some
copula at scale \(p\);
\item the recovered system satisfies
\[
w_{I,\sigma}\ge 0 \quad\text{for all }(I,\sigma)\in\mathcal G_d,
\qquad
\lambda_{\{i\},\LL}=\lambda_{\{i\},\UU}=1\ \text{for }i=1,\dots,d,
\]
and
\[
p\sum_{(I,\sigma)\in\mathcal G_d} w_{I,\sigma}\le 1;
\]
\item \(\lambda\) is realized by a geometric witness copula at scale \(p\).
\end{enumerate}
\end{theorem}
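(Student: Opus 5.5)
I would prove the cycle of implications (1)\(\Rightarrow\)(2)\(\Rightarrow\)(3)\(\Rightarrow\)(1), using only results already established.

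For (1)\(\Rightarrow\)(2), suppose \(\lambda=\lambda^{(p)}(C)\) for some copula \(C\). Proposition~\ref{prop:triangular-inversion} says the triangular inversion of a complete family is unique, so the recovered system \(w\) coincides with \(w^{(p)}\) from Proposition~\ref{prop:finite-p-moebius-recovery}. That proposition then supplies all three conditions at once:
\begin{itemize}
\item[(a)] \(w_{I,\sigma}=q^{(p)}_{a(I,\sigma)}/p\ge 0\);
\item[(b)] the singleton normalization, which holds because \(C\) has exact margins;
\item[(c)] \(p\sum w_{I,\sigma}=\sum_{a\neq\MM^d}q^{(p)}_a\le 1\).
\end{itemize}
This direction is essentially a citation.

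For (2)\(\Rightarrow\)(3), I would take the nonnegative recovered \(w\) and set the geometric scale equal to \(p\), i.e.\ \(p_0:=p\). The bound \(p\sum w\le 1\) gives \(m_0\ge 0\). Corollary~\ref{cor:singleton-normalization} turns the singleton normalization into the exact margin conditions of Proposition~\ref{prop:exact-margins}. Proposition~\ref{prop:formal-witness-copula} then shows that \(C^{(w,p)}\) is a geometric witness copula.

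It remains to check that this copula realizes \(\lambda\) in the finite-threshold sense of the present subsection, namely \(\lambda^{(p)}_{J,\tau}(C^{(w,p)})=\lambda_{J,\tau}\). I would establish this in two ways, as a consistency check:
\begin{itemize}
\item By Proposition~\ref{prop:w-to-q} and Proposition~\ref{prop:tail-event-masses} at threshold \(p_0=p\), the tail totals satisfy \(t^{(p)}_{J,\tau}(q)=p\,(Aw)_{J,\tau}=p\,\lambda_{J,\tau}\). Here \(Aw=\lambda\) because \(w\) was obtained by inverting the invertible triangular system.
\item Alternatively, Corollary~\ref{cor:full-witness-fixed-scale} with \(p=p_0\) gives the same identity through the reflected-coordinate readout.
\end{itemize}
The two readouts agree modulo the null boundary sets, because the ternary cells at threshold \(p\) match the witness cells exactly when \(p_0=p\).

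For (3)\(\Rightarrow\)(1), a geometric witness copula is a copula by Proposition~\ref{prop:formal-witness-copula}. Realizing \(\lambda\) at scale \(p\) means precisely that its finite-threshold family equals \(\lambda\), so (1) holds.

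The main obstacle is definitional rather than computational. It lies in (2)\(\Rightarrow\)(3): I must verify that the mass the canonical generators place in each cell \(B_a^{(p)}\) is exactly \(p\,w_{I(a),\sigma(a)}\), with no leakage across cell boundaries. Active coordinates \(pZ\) and \(1-pZ\) lie in the closed tail intervals, and inactive coordinates lie in \(M_p\); the endpoint overlaps carry zero mass, since \(Z\) and \(V_i\) are continuous. I would state this explicitly so that \(q_a^{(p)}=q_a\) holds, rather than leaving it implicit.
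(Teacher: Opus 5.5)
Your proposal is correct and follows the paper's own proof. The paper runs the same cycle (1)$\Rightarrow$(2)$\Rightarrow$(3)$\Rightarrow$(1): it cites Proposition~\ref{prop:finite-p-moebius-recovery} for the first step, Corollary~\ref{cor:complete-compatibility} together with Proposition~\ref{prop:formal-witness-copula} for the second, and calls the third immediate. Your added check makes explicit what the paper leaves implicit (``by construction''): at $p_0=p$ the canonical witness has cell masses $q_a^{(p)}=p\,w_{I(a),\sigma(a)}$, since the boundary overlaps are null, and therefore $\lambda^{(p)}(C^{(w,p)})=Aw=\lambda$.
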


\begin{proof}
\((1)\Rightarrow(2)\) follows from
Proposition~\ref{prop:finite-p-moebius-recovery}.

\((2)\Rightarrow(3)\) is precisely the content of
Corollary~\ref{cor:complete-compatibility} together with the formal witness
construction of Proposition~\ref{prop:formal-witness-copula}.

\((3)\Rightarrow(1)\) is immediate, because a geometric witness copula is, by
construction, a copula realizing the same complete finite-threshold signed
family.
\end{proof}

For complete finite-threshold families, the witness construction
synthesizes exactly the complete data that any realizing copula induces on the
ternary partition.

\subsection{Finite-Threshold Admissibility}

With the fixed-threshold recovery in hand, we return to the \(p_0\)-free tail-level family \(\lambda\). At the tail
level, validity requires nonnegative recovered increments together with the
singleton normalization. Once these hold, the only remaining
finite-threshold constraint is geometric and is controlled entirely by the
residual central mass.

\begin{theorem}[Finite-Threshold Admissibility Theorem]
\label{thm:small-tail}
Let \(\lambda=\bigl(\lambda_{J,\tau}\bigr)\) be a complete signed tail family,
and let \(w\) be the unique coefficient system recovered from \(\lambda\) by
triangular inversion. Assume that
\[
w_{I,\sigma}\ge 0
\qquad\text{for all }(I,\sigma)\in\mathcal G_d,
\]
and that the singleton normalization holds,
\[
\lambda_{\{i\},\LL}=1,
\qquad
\lambda_{\{i\},\UU}=1,
\qquad i=1,\dots,d.
\]
Set
\[
S(w):=\sum_{(I,\sigma)\in\mathcal G_d} w_{I,\sigma},
\qquad
p_{\max}:=\min\!\left\{\tfrac12,\,\frac{1}{S(w)}\right\}.
\]
Then every \(p_0\in(0,p_{\max})\) admits an exact geometric witness realization
of the same complete signed tail family \(\lambda\). Moreover, if
\(1/S(w)<1/2\), the endpoint \(p_0=p_{\max}\) is also admissible. By contrast,
if \(p_0>p_{\max}\), no copula---witness or otherwise---can realize
\(\lambda\) as a complete finite-threshold signed tail family at scale \(p_0\).
\end{theorem}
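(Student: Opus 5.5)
The proof splits into an existence part for \(p_0\le p_{\max}\) (with the endpoint caveat) and a nonexistence part for \(p_0>p_{\max}\). Both parts reduce to Theorem~\ref{thm:complete-finite-p-synthesis}, applied with \(p=p_0\). The key observation is that condition (2) of that theorem has two pieces. Nonnegativity of \(w\) and singleton normalization are exactly the standing hypotheses, and they do not depend on \(p_0\). The only \(p_0\)-dependent requirement is the central-mass bound \(p_0 S(w)\le 1\). So the whole statement becomes a question about which \(p_0\in(0,\tfrac12)\) satisfy \(p_0\le 1/S(w)\).

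For existence, first note that \(S(w)>0\). Singleton normalization gives \(\lambda_{\{1\},\LL}=1\), and since \(\lambda=Aw\) with \(w\ge0\), at least one weight is positive. Hence \(1/S(w)\) is finite and positive.

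Now take \(p_0\in(0,p_{\max})\). Then \(p_0<\tfrac12\) and \(p_0<1/S(w)\), so \(m_0=1-p_0S(w)>0\). Corollary~\ref{cor:complete-compatibility} and Proposition~\ref{prop:formal-witness-copula} then give a geometric witness copula \(C^{(w,p_0)}\) with uniform margins. Proposition~\ref{prop:tail-event-masses} shows that its complete finite-threshold signed tail family at scale \(p_0\) equals \(\lambda\). Corollary~\ref{cor:full-witness-fixed-scale} strengthens this: the copula realizes the same \(\lambda\) at every \(p\le p_0\). This is what "exact realization of the same complete family" means in the statement.

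For the endpoint, if \(1/S(w)<\tfrac12\) then \(p_{\max}=1/S(w)\) lies in \((0,\tfrac12)\), and \(m_0=0\). Definition~\ref{def:wweights} and Proposition~\ref{prop:formal-witness-copula} only require \(m_0\ge0\), so the same construction works; the central component simply receives zero mass. When instead \(p_{\max}=\tfrac12\), the endpoint lies outside the allowed scale range \((0,\tfrac12)\), which is why it is excluded.

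For nonexistence, let \(p_0>p_{\max}\). If \(p_0\ge\tfrac12\), the ternary partition is not defined in the paper's framework, so there is nothing to realize; I would state this as a convention, since the scale was always taken in \((0,\tfrac12)\). Otherwise \(p_0\in(0,\tfrac12)\) and \(p_0>1/S(w)\). Suppose some copula \(C\) had \(\lambda^{(p_0)}(C)=\lambda\). By Proposition~\ref{prop:finite-p-moebius-recovery}, triangular inversion of \(\lambda^{(p_0)}(C)\) gives weights satisfying \(p_0\sum w^{(p_0)}_{I,\sigma}\le1\). By the uniqueness in Proposition~\ref{prop:triangular-inversion}, these weights coincide with \(w\). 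That would force \(p_0S(w)\le1\), a contradiction. This is just the implication \((1)\Rightarrow(2)\) of Theorem~\ref{thm:complete-finite-p-synthesis}.

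The main subtlety is not computational. It is making sure the realized object is the \(p_0\)-free \(\lambda\) itself, not a \(p_0\)-dependent readout. This is handled by the fact that both the incidence map and the inversion are \(p_0\)-free (Theorem~\ref{thm:w-to-lambda}), and I would flag it explicitly. A minor related point is that \(p_0\ge\tfrac12\) lies outside the framework and must be treated by convention.
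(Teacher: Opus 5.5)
Your proposal is correct and follows essentially the same route as the paper: both reduce the statement to Theorem~\ref{thm:complete-finite-p-synthesis} at \(p=p_0\), note that the only \(p_0\)-dependent condition is \(p_0S(w)\le 1\), and then intersect with the intrinsic range \((0,\tfrac12)\). You also make explicit some points the paper leaves implicit: that \(S(w)>0\), that the endpoint gives \(m_0=0\), which the construction permits, and that \(p_0\ge\tfrac12\) is excluded by convention.
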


This is an internal vanishing-threshold statement for the same complete signed tail
family \(\lambda\): the theorem concerns a family of finite-threshold copulas
carrying the same complete signed data, not a single limiting copula at
\(p_0=0\).

\begin{proof}
For any fixed \(p_0\in(0,1/2)\), Theorem~\ref{thm:complete-finite-p-synthesis}
shows that \(\lambda\) is realizable at scale \(p_0\) if and only if
\(p_0S(w)\le 1\). This is equivalent to \(p_0\le 1/S(w)\). Intersecting with the
intrinsic threshold range \((0,1/2)\) gives the stated admissible interval.
The endpoint statement is immediate when \(1/S(w)<1/2\), and non-realizability
for \(p_0>p_{\max}\) follows from the same equivalence.
\end{proof}

\begin{remark}[A dimension-only safe scale]
Under singleton normalization, every valid tail-level witness weight system
satisfies \(S(w):=\sum_{(I,\sigma)\in\mathcal G_d}w_{I,\sigma}\le 2d\).
Indeed, summing the lower- and upper-singleton normalization identities over
all coordinates gives
\(\sum_{(I,\sigma)\in\mathcal G_d}|I|\,w_{I,\sigma}=2d\). Since
\(|I|\ge 1\) for every noncentral generator, \(S(w)\le 2d\). The bound is
sharp, being attained by the purely singleton system
\(w_{\{i\},L}=w_{\{i\},U}=1\), \(i=1,\ldots,d\), with all higher-order
generator weights equal to zero. Consequently, for \(d\ge2\), every tail-level
valid complete signed family admits a finite-threshold witness realization at
every scale \(0<p_0\le 1/(2d)\). This is only a dimension-only safe scale; the
sharper family-specific admissible scale is \(p_0\le 1/S(w)\), subject to the
intrinsic constraint \(p_0<1/2\), as in
Theorem~\ref{thm:small-tail}.
\end{remark}

\begin{corollary}[Vanishing-Threshold Realization Corollary]
\label{cor:vanishing-threshold-family}
Under the assumptions of Theorem~\ref{thm:small-tail}, let
\((p_n)_{n\ge 1}\) be any sequence with \(p_n\downarrow 0\). Then, for all
sufficiently large \(n\), there exists a geometric witness copula \(C_{p_n}\)
such that the complete finite-threshold signed tail family induced by \(C_{p_n}\) at
scale \(p_n\) is exactly \(\lambda\). Moreover, for every such realization,
\[
q_{\MM^d}^{(p_n)} = 1-p_n S(w) \longrightarrow 1.
\]
In particular, the asymptotic object is the complete signed tail family \(\lambda\),
realized not by a new limiting copula at \(p=0\), but by a vanishing-threshold
family of finite-\(p\) copulas carrying the same complete signed system.
\end{corollary}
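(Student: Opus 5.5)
The plan is to reduce the corollary directly to Theorem~\ref{thm:small-tail}, using the fact that \(p_{\max}>0\). First, note that \(S(w)>0\). Singleton normalization gives \(\lambda_{\{1\},\LL}=1\), which is a sum of nonnegative weights, so at least one \(w_{I,\sigma}\) is positive. Hence \(p_{\max}=\min\{\tfrac12,1/S(w)\}\) is a strictly positive number. Since \(p_n\downarrow0\), there is an index \(N\) with \(0<p_n<p_{\max}\) for all \(n\ge N\). For each such \(n\), Theorem~\ref{thm:small-tail} (equivalently, the implication (2)\(\Rightarrow\)(3) of Theorem~\ref{thm:complete-finite-p-synthesis}) supplies the geometric witness copula \(C_{p_n}:=C^{(w,p_n)}\) of Proposition~\ref{prop:formal-witness-copula}. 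Its complete finite-threshold signed tail family at scale \(p_n\) is exactly \(\lambda\). I would check this last identity via Proposition~\ref{prop:tail-event-masses}, or via Corollary~\ref{cor:full-witness-fixed-scale} with \(p=p_0=p_n\).

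For the central-mass statement, the claim is for \emph{every} realization, not only the witness one. So I would apply Proposition~\ref{prop:finite-p-moebius-recovery} to an arbitrary copula \(C\) whose complete finite-threshold family at scale \(p_n\) equals \(\lambda\). That proposition shows that triangular inversion of \(\lambda^{(p_n)}(C)=\lambda\) returns \(q^{(p_n)}_{a(I,\sigma)}/p_n\). By the uniqueness in Proposition~\ref{prop:triangular-inversion}, this must coincide with the \(p_n\)-free recovered system \(w\). Hence \(q^{(p_n)}_a=p_n w_{I(a),\sigma(a)}\) for every noncentral cell, and summing gives \(q^{(p_n)}_{\MM^d}=1-p_nS(w)\). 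Since \(S(w)\) is a fixed finite number (indeed \(S(w)\le 2d\) by the preceding remark), this tends to \(1\) as \(n\to\infty\).

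The final interpretive sentence needs no proof beyond observing that \(\lambda\) is held fixed along the sequence while only \(p_n\) and the central mass vary. The only step needing care is the ``every such realization'' clause. The main obstacle is making sure that the identification of the noncentral masses uses the uniqueness of triangular inversion rather than the particular witness geometry, and Proposition~\ref{prop:finite-p-moebius-recovery} handles exactly that. The ``sufficiently large \(n\)'' qualifier only serves to keep \(p_n<p_{\max}\), including the constraint \(p_n<1/2\).
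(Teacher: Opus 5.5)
Your proposal is correct. The existence step matches the paper's proof: eventually \(p_n<p_{\max}\), and Theorem~\ref{thm:small-tail} supplies \(C_{p_n}\). Your observation that \(S(w)>0\), so that \(p_{\max}>0\) is well defined, is a detail the paper leaves implicit.

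The central-mass clause is where you take a genuinely different route. The paper simply cites Proposition~\ref{prop:w-to-q}, reading \(q_{\MM^d}=1-p_nS(w)\) off the witness construction itself. That reads ``every such realization'' as witness realizations. It also tacitly uses that any witness system realizing \(\lambda\) must coincide with the recovered \(w\), because \(A\) is invertible.

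You instead go through Proposition~\ref{prop:finite-p-moebius-recovery} and the uniqueness in Proposition~\ref{prop:triangular-inversion}. This shows that \emph{any} copula whose complete family at scale \(p_n\) equals \(\lambda\) has noncentral masses \(p_n w_{I(a),\sigma(a)}\). Hence its central mass is \(1-p_nS(w)\); this uses that the cell boundaries are null sets because the margins are uniform.

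The paper's route is one line and suffices under the narrow reading. Yours needs one more citation, but it proves the stronger fact that the central mass is forced by \(\lambda\) and \(p_n\) alone, independent of the witness geometry. It also makes explicit the uniqueness step the paper passes over.
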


\begin{proof}
Since \(p_n\downarrow0\), we have \(p_n<p_{\max}\) for all sufficiently large
\(n\), so Theorem~\ref{thm:small-tail} yields the copulas \(C_{p_n}\). The
central-mass formula follows directly from
Proposition~\ref{prop:w-to-q}.
\end{proof}

\begin{remark}[Structural versus geometric constraints]
\label{rem:structural-vs-geometric}
For a complete signed tail family, the recovered Möbius increments are
precisely the recovered witness weights. In the complete finite-threshold
setting they coincide, by fixed-threshold rescaling, with the normalized
masses of the noncentral ternary cells. Thus
negative recovered increments and failure of the singleton normalization are
structural constraints: by
Proposition~\ref{prop:finite-p-moebius-recovery}, no copula can bypass them at
the given threshold. By contrast, a large value of \(S(w)=\sum w\) is not a
structural limitation on synthesis. It only determines the maximal admissible
scale
\[
p_{\max}=\min\!\left\{\tfrac12,\,\frac{1}{\sum w}\right\}.
\]
In this sense, the Möbius layer decides what is allowed in the tails, while
the finite-threshold geometry decides at which scales that same family can be
realized.
\end{remark}

\section{Discussion}
\label{sec:discussion}

After Section~\ref{sec:asymptotic}, the remaining questions are what has
been proved exactly, what lies outside the scope of the paper, and which next
questions are the most natural.

\subsection{What the framework adds}

The witness construction adds several distinct but closely related ingredients.
First, it provides a lean geometric language in which complete signed tail
families are linear and \(p_0\)-free. Second, in the complete case, triangular
inversion recovers the normalized noncentral ternary cell masses of any
realizing copula at a fixed finite threshold. In that setting, the residual
central mass is the only additional finite-threshold constraint.

It also separates complete and partial data cleanly. For complete signed tail families, the witness conditions are decided algebraically: triangular inversion recovers the relevant increments, and their signs and total mass determine tail-level validity and the admissible \(p_0\)-range.
For partial or noisy targets, the same witness parametrization
leads instead to polyhedral feasibility and approximation problems.

Appendix~\ref{app:poset-moebius} identifies the combinatorial backbone of the
witness map: it is the zeta transform of a nonzero signed ternary poset, and
the complete-case recovery is the corresponding M\"obius inversion.

The signed witness language also remains connected to earlier upper-tail
constructions: when restricted to upper-tail-only generators, the same
incidence algebra collapses to the binary complete upper-tail
incidence/inversion scheme underlying the complete unsigned multivariate
upper-tail regime.

\subsection{Scope, limits, and next steps}

Section~\ref{sec:margins} and Section~\ref{sec:asymptotic} together clarify the main structural point of the
paper. Section~\ref{sec:margins} proves that, in the canonical ray geometry, a witness copula
realized at scale $p_0$ preserves the same complete signed tail family
throughout the whole range $0<p\le p_0$, while Section~\ref{sec:asymptotic} identifies exactly
which scales $p_0$ are admissible for such a realization.

The paper does not claim uniqueness of the realizing copula for a given
complete family. It does not claim that partial or lower-order summaries
determine a unique completion without further assumptions. It does not claim
that every constructive copula model must reduce to the witness geometry,
only that every compatible complete signed tail family can be synthesized by a
witness copula. Nor does it identify the present vanishing-threshold reading
with the whole classical extreme-value copula theory. The point here is a
precise internal asymptotic scheme for the complete signed tail family
\(\lambda\).

Three next steps are particularly natural. One is to study the
higher-dimensional geometry of complete and partial compatibility regions more
systematically. Another is to develop statistical calibration and deterministic
model-selection rules inside the feasible witness class. A third is to compare
the expressive power of the witness framework with other constructive copula
families in concrete applied case studies.

Simple benchmark families already suggest that such comparisons may be
nontrivial. In any dimension \(d\ge2\), in the pure upper-tail restriction
of the witness framework, the specification \(\lambda^U_{\{i,j\}}=\beta\)
for all pairs \(\{i,j\}\subset[d]\), with \(\lambda^U_J=0\) for all
\(|J|\ge3\), is realizable exactly for \(0\le\beta\le 1/(d-1)\). Another
natural signed benchmark is the complete family with
\(\lambda_{\{i,j\},\tau}=\beta\) for all pairs \(\{i,j\}\subset[d]\) and all
\(\tau\in\{L,U\}^{\{i,j\}}\), together with
\(\lambda_{\{i\},L}=\lambda_{\{i\},U}=1\) for all \(i\in[d]\), and
\(\lambda_{J,\tau}=0\) for all \(|J|\ge3\). Within the witness framework this
is realizable exactly for \(0\le\beta\le 1/(2d-2)\), and at
\(\beta=1/(2d-2)\) all signed pairwise coefficients equal \(\beta\) although every higher-order coefficient vanishes.

\section{Conclusions}
\label{sec:conclusions}

We introduced a geometric witness construction for signed multivariate tail
dependence, indexed by active coordinate sets and sign patterns. For complete
signed tail families it yields a linear \(p_0\)-free parametrization in
witness weights together with explicit triangular inversion.

At a structural level, the witness construction is governed by the
zeta/M\"obius calculus of a nonzero signed ternary poset, with complete-case
recovery given by the corresponding M\"obius inversion; see
Appendix~\ref{app:poset-moebius}.

At a fixed threshold, the same inversion characterizes complete finite-threshold
compatibility through nonnegative recovered increments, singleton normalization,
and residual central mass. If the recovered increments are nonnegative, then
$p_{\max}=\min\{1/2,\,1/\sum w\}$ determines the admissible finite-scale range,
and every admissible $p_0$ yields an exact realization of the same complete
signed tail family. In the canonical ray geometry, this realized witness has a
further rigidity property: once realized at scale $p_0$, it preserves that same
family throughout the whole threshold range $0<p\le p_0$.

Computationally, the complete case is handled by direct inversion, whereas
partial, noisy, or inconsistent specifications lead to linear feasibility and
approximation problems in the same witness parametrization. This is particularly relevant in applications such as risk management and environmental modelling, where harmful deviations may occur in both directions and where reinforcing signed co-movements, in both bivariate and multivariate form, can materially change the overall risk profile. The same witness parametrization also provides an interpretable language for describing such signed extremal dependence patterns.

\section*{Acknowledgements}

The author used ChatGPT for language editing and technical assistance. All mathematical content and final judgments remain the author’s sole responsibility.

\section*{Reproducibility}
\label{sec:reproducibility}

The numerical results reported in this paper consist of direct inversion, linear-feasibility tests, linear
\(\ell^1\)-repair problems, and canonical simulation checks for the witness copula. Sections~\ref{sec:lp} and 
\ref{sec:simulation}
specify the linear models, benchmark inputs, simulation protocol, and validation logic, and Appendix E
contains an accompanying Python implementation, including the canonical witness sampler and the
variable-threshold diagnostic helpers used to compare theoretical and empirical values of
\(\lambda^{(p)}\). The numerical experiments were run in Python 3.13.5 using scipy 1.17.1, with
linear programs solved by the HiGHS backend via scipy.optimize.linprog.

\appendix
\numberwithin{figure}{section}

\section{Poset structure and M\"obius interpretation of the witness map}
\label{app:poset-moebius}

This appendix records the incidence-algebra structure underlying the witness
construction. It identifies a signed ternary poset naturally underlying the
witness indexing, shows that the witness map is its zeta transform, proves that
its intervals are Boolean, and interprets the complete-case recovery as the
corresponding M\"obius inversion.

\begin{definition}[Signed ternary poset]
Let
\[
E:=\{\MM,\LL,\UU\},
\]
equipped with the partial order
\[
\MM\prec \LL,\qquad \MM\prec \UU,
\]
while \(\LL\) and \(\UU\) are incomparable. Let
\[
P_d:=E^d
\]
with the product order, and let
\[
P_d^\times:=P_d\setminus\{(\MM,\dots,\MM)\}.
\]
\end{definition}

\begin{remark}
This is not the geometric order on the unit interval. It is the combinatorial
order of signed extension: the middle state \(\MM\) is the inactive state, while
\(\LL\) and \(\UU\) are the two possible active states.
\end{remark}

For \(x\in P_d\), write \(\supp(x):=\{i\in[d]:x_i\neq M\}\) and \(r(x):=\#\supp(x)\).

\begin{proposition}[Bijection with signed witness indices]
There is a canonical bijection between \(P_d^\times\) and
\[
G_d:=\{(I,\sigma):\emptyset\neq I\subseteq[d],\ \sigma\in\{\LL,\UU\}^I\}.
\]
Namely, for \(x\in P_d^\times\), let \(I(x):=\operatorname{supp}(x)\) and define
\(\sigma(x)\in\{\LL,\UU\}^{I(x)}\) by
\(
\sigma(x)_i=x_i,\; i\in I(x).
\)
Conversely, for \((I,\sigma)\in G_d\), define \(x(I,\sigma)\in P_d^\times\) by
\[
x(I,\sigma)_i=
\begin{cases}
\sigma_i,& i\in I,\\
\MM,& i\notin I.
\end{cases}
\]
Hence \(|P_d^\times|=|G_d|=3^d-1\).

The same indexing also identifies the complete signed tail family
\[
(\lambda_{J,\tau})_{\emptyset\neq J\subseteq[d],\ \tau\in\{\LL,\UU\}^J}
\]
with a function on \(P_d^\times\).
\end{proposition}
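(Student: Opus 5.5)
The plan is to verify directly that the two maps \(x\mapsto (I(x),\sigma(x))\) and \((I,\sigma)\mapsto x(I,\sigma)\) are well defined and mutually inverse, and then read off the cardinality.

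\emph{Well-definedness.} For \(x\in P_d^\times\), \(x\neq(\MM,\dots,\MM)\), so \(I(x)=\supp(x)\) is nonempty. For every \(i\in I(x)\) we have \(x_i\neq\MM\), hence \(x_i\in\{\LL,\UU\}\). Thus \(\sigma(x)\in\{\LL,\UU\}^{I(x)}\) and \((I(x),\sigma(x))\in G_d\). Conversely, for \((I,\sigma)\in G_d\), the vector \(x(I,\sigma)\) lies in \(E^d\). Its support is exactly \(I\), because \(\sigma_i\in\{\LL,\UU\}\) is never \(\MM\). Since \(I\neq\emptyset\), \(x(I,\sigma)\) is not the all-\(\MM\) vector, so it lies in \(P_d^\times\).

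\emph{Mutual inverses.} Starting from \((I,\sigma)\), the support of \(x(I,\sigma)\) is \(I\) and its restriction to \(I\) is \(\sigma\), so we return to \((I,\sigma)\). Starting from \(x\), the vector \(x(I(x),\sigma(x))\) agrees with \(x\) on \(\supp(x)\) by definition of \(\sigma(x)\). Off the support it equals \(\MM=x_i\). Hence the composite is the identity on \(P_d^\times\), and the correspondence is a bijection. This is the same canonical correspondence as in the remark following the definition of \(\mathcal G_d\) and in the definition of \(I(a),\sigma(a)\) for ternary states in Section~\ref{sec:ternary-states}; here it is simply rephrased in poset language.

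\emph{Cardinality.} \(|P_d^\times|=3^d-1\) by counting \(E^d\) and removing one element. This agrees with the count \(|\mathcal G_d|=\sum_k\binom dk 2^k=3^d-1\) already recorded, and the bijection makes the agreement automatic.

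\emph{The tail family.} The complete signed tail family \(\lambda\) is indexed by exactly the same set of pairs \((J,\tau)\) as \(G_d\). Composing with the bijection therefore defines \(\lambda\) as a function on \(P_d^\times\) via \(x\mapsto\lambda_{I(x),\sigma(x)}\).

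There is no real obstacle. The only point needing care is that a sign pattern takes values in \(\{\LL,\UU\}\) and never \(\MM\), which is what makes the support of \(x(I,\sigma)\) equal to \(I\) exactly.
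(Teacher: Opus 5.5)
Your proposal is correct and takes the paper's approach. The paper just says the two constructions are mutually inverse ``by inspection,'' and you have written that inspection out: well-definedness, both composites being the identity, and the resulting count $3^d-1$.
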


\begin{proof}
The two constructions are inverse to each other by inspection.
\end{proof}

\begin{proposition}[Order as signed extension]
\label{prop:order-as-signed-extension}
For \(x,y\in P_d^\times\),
\[
x\preceq y
\quad\Longleftrightarrow\quad
\forall i\in[d],\ x_i\neq \MM \Rightarrow x_i=y_i.
\]
Under the bijection above, this is exactly the signed extension relation
\[
J\subseteq I,
\quad
\sigma|_J=\tau,
\]
where \(x=x(J,\tau)\) and \(y=x(I,\sigma)\).
\end{proposition}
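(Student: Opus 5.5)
The plan is to unpack the product order coordinatewise and then translate through the bijection. By definition of the product order on \(P_d=E^d\), \(x\preceq y\) holds iff \(x_i\preceq y_i\) in \(E\) for every \(i\in[d]\). So the first step is to characterize the order on the three-element set \(E\): the only relations are \(\MM\preceq e\) for all \(e\in E\), together with reflexivity \(\LL\preceq\LL\) and \(\UU\preceq\UU\), while \(\LL,\UU\) are incomparable and neither lies below \(\MM\). Hence \(x_i\preceq y_i\) iff either \(x_i=\MM\) or \(x_i=y_i\), which is the same as the implication \(x_i\neq\MM\Rightarrow x_i=y_i\). Taking the conjunction over \(i\) gives the first displayed equivalence.

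For the second part, I will set \(x=x(J,\tau)\) and \(y=x(I,\sigma)\) as in the bijection proposition, so that \(\supp(x)=J\) with \(x_i=\tau_i\) on \(J\), and \(\supp(y)=I\) with \(y_i=\sigma_i\) on \(I\). The condition ``\(x_i\neq\MM\Rightarrow x_i=y_i\) for all \(i\)'' then reads: for every \(i\in J\), \(y_i=\tau_i\in\{\LL,\UU\}\).

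\emph{Forward direction.} Since \(\tau_i\neq\MM\), we get \(y_i\neq\MM\), so \(i\in I\). This gives \(J\subseteq I\), and then \(\sigma_i=y_i=\tau_i\), i.e. \(\sigma|_J=\tau\).

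\emph{Converse.} If \(J\subseteq I\) and \(\sigma|_J=\tau\), then for each \(i\in J\) we have \(y_i=\sigma_i=\tau_i=x_i\), and coordinates outside \(J\) impose no condition.

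This identifies the relation with exactly the indicator \(\mathbf 1\{J\subseteq I,\ \sigma|_J=\tau\}\) appearing in Theorem~\ref{thm:w-to-lambda}.

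There is no real obstacle here. The only point needing care is the forward direction, where one uses that \(y_i\neq\MM\) forces \(i\in\supp(y)=I\). Without that step, membership in \(I\) would not follow and one would only obtain a pointwise equality of labels.
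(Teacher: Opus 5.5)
Your proposal is correct and takes the same approach as the paper, which simply says the result is immediate from the definition of the product order on \(E^d\). Your coordinatewise unpacking (\(x_i\preceq y_i\) iff \(x_i=\MM\) or \(x_i=y_i\)) and the translation through the bijection fill in the details the paper leaves implicit, including the step that \(y_i\neq\MM\) forces \(i\in I\).
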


\begin{proof}
This is immediate from the definition of the product order on \(E^d\).
\end{proof}

For \(x\preceq y\), let \(D(x,y):=\{i\in[d]:x_i=M,\ y_i\neq M\}\), so \(|D(x,y)|=r(y)-r(x)\).

\begin{proposition}[Boolean intervals]
For any \(x\preceq y\) in \(P_d\), the interval
\(
[x,y]:=\{z\in P_d:x\preceq z\preceq y\}
\)
is canonically isomorphic to the Boolean lattice \(2^{D(x,y)}\).
\end{proposition}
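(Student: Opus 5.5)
The plan is to write down an explicit map and verify that it is an order isomorphism. Fix \(x\preceq y\) in \(P_d\) and set \(D:=D(x,y)\). I will define \(\Phi:2^{D}\to[x,y]\) by
\[
\Phi(S)_i:=
\begin{cases}
y_i, & i\in S,\\
x_i, & i\notin S,
\end{cases}
\]
and \(\Psi:[x,y]\to 2^{D}\) by \(\Psi(z):=\{i\in D: z_i\neq\MM\}\). The claim then reduces to three checks: \(\Phi\) lands in \([x,y]\), \(\Psi\circ\Phi\) and \(\Phi\circ\Psi\) are the identities, and both maps preserve order.

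First I record the coordinatewise structure of the relation \(x\preceq y\). By the product order, each coordinate satisfies exactly one of two cases. Either \(x_i=y_i\), which covers \(i\notin D\). Or \(x_i=\MM\) and \(y_i\in\{\LL,\UU\}\), which is exactly the case \(i\in D\). The other pairs are excluded: \(\LL\) and \(\UU\) are incomparable, and nothing lies strictly above \(\LL\) or \(\UU\).

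Next I use that each factor interval \([x_i,y_i]\) in \(E\) has the form
\[
[x_i,y_i]=
\begin{cases}
\{x_i\}, & i\notin D,\\
\{\MM,y_i\}, & i\in D.
\end{cases}
\]
In a product order, \([x,y]\) equals the product of the factor intervals. The factor intervals are singletons for \(i\notin D\) and two-element chains for \(i\in D\), so \([x,y]\) is the product of \(|D|\) copies of the two-element chain. This product is \(2^{D}\). This also shows directly that \(\Phi\) and \(\Psi\) are mutually inverse bijections.

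For order preservation I compare coordinatewise. In each two-element chain \(\MM\prec y_i\), the relation \(\Phi(S)_i\preceq\Phi(S')_i\) holds if and only if \(i\in S\Rightarrow i\in S'\). Hence \(\Phi(S)\preceq\Phi(S')\) if and only if \(S\subseteq S'\). The identification is canonical because it depends only on the data \(D(x,y)\) and \(y|_D\).

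The one point needing care is that the statement is over all of \(P_d\), including \(x=\MM^d\), while \(P_d^\times\) removes that element. The argument above never uses \(x\ne\MM^d\), so it covers this case unchanged. The interval length matches \(|D|=r(y)-r(x)\), as noted before the statement.
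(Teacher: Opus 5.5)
Your proof is correct and follows essentially the same route as the paper: a coordinatewise case analysis showing that each \(z\in[x,y]\) agrees with \(x\) and \(y\) off \(D(x,y)\) and satisfies \(z_i\in\{\MM,y_i\}\) on \(D(x,y)\), with the subset \(\{i\in D(x,y): z_i=y_i\}\) giving the identification. Your check that \(\Phi(S)\preceq\Phi(S')\iff S\subseteq S'\) establishes order reflection as well as preservation, a detail the paper only asserts.
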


\begin{proof}
If \(z\in[x,y]\), then for each \(i\in[d]\):
if \(x_i\neq \MM\), necessarily \(z_i=x_i=y_i\);
if \(x_i=\MM\) and \(y_i=\MM\), necessarily \(z_i=\MM\);
if \(x_i=\MM\) and \(y_i\neq \MM\), then \(z_i\in\{\MM,y_i\}\).
Thus \(z\) is determined uniquely by the subset
\[
S(z):=\{i\in D(x,y):z_i=y_i\}\subseteq D(x,y).
\]
Conversely, every subset \(S\subseteq D(x,y)\) determines a unique
\(z\in[x,y]\) by setting \(z_i=y_i\) for \(i\in S\), \(z_i=\MM\) for
\(i\in D(x,y)\setminus S\), and \(z_i=x_i\) otherwise. This identification
is order-preserving.
\end{proof}

\begin{corollary}[M\"obius function]
The M\"obius function of \(P_d\) is
\[
\mu(x,y)=
\begin{cases}
(-1)^{|D(x,y)|}=(-1)^{r(y)-r(x)},& x\preceq y,\\
0,& x\npreceq y.
\end{cases}
\]
The same formula holds on \(P_d^\times\).
\end{corollary}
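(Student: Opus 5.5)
The plan is to use the standard fact that the M\"obius function of a locally finite poset is determined interval by interval: \(\mu(x,y)\) depends only on the isomorphism type of \([x,y]\), and \(\mu(x,y)=0\) whenever \(x\npreceq y\). The preceding proposition on Boolean intervals gives an order isomorphism \([x,y]\cong 2^{D(x,y)}\) sending \(x\) to \(\emptyset\) and \(y\) to \(D(x,y)\). It therefore suffices to know the M\"obius function of a finite Boolean lattice between its bottom and top.

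First, recall the defining recursion \(\mu(x,x)=1\) and \(\sum_{x\preceq z\preceq y}\mu(x,z)=0\) for \(x\prec y\). This recursion only involves elements of \([x,y]\), so transporting it along the isomorphism gives \(\mu_{P_d}(x,y)=\mu_{2^{D}}(\emptyset,D)\) with \(D=D(x,y)\).

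Second, verify \(\mu_{2^D}(\emptyset,S)=(-1)^{|S|}\) by induction on \(|S|\). The base case is \(S=\emptyset\). For the inductive step, the recursion together with the binomial identity \(\sum_{T\subseteq S}(-1)^{|T|}=0\) for \(S\neq\emptyset\) forces \(\mu(\emptyset,S)=(-1)^{|S|}\). Alternatively, one can invoke the product formula for the Boolean lattice as a product of two-element chains. This yields \((-1)^{|D(x,y)|}\).

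Third, identify \(|D(x,y)|=r(y)-r(x)\). This holds because, for \(x\preceq y\), \(\supp(x)\subseteq\supp(y)\) and \(D(x,y)=\supp(y)\setminus\supp(x)\), as already noted before the proposition. The case \(x\npreceq y\) gives \(0\) by the usual convention for incidence-algebra functions.

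Finally, for \(P_d^\times\), note that it is an up-closed subset of \(P_d\): removing the bottom element \(\MM^d\) cannot delete any element lying between \(x\) and \(y\) when \(x\neq\MM^d\). So intervals \([x,y]\) in \(P_d^\times\) coincide with those in \(P_d\), and the same formula holds. There is no substantive obstacle. The only point requiring care is to state explicitly that the M\"obius function is computed intrinsically on intervals, so that the isomorphism transfers it.
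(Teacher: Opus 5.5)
Your proposal is correct and follows the same route as the paper. Both reduce via the Boolean-interval proposition to the M\"obius function of a Boolean lattice, and both observe that removing the bottom element \(\MM^d\) leaves every interval between nonzero comparable elements unchanged. You additionally spell out details the paper leaves implicit: the induction giving \(\mu_{2^D}(\emptyset,S)=(-1)^{|S|}\), and the identity \(D(x,y)=\supp(y)\setminus\supp(x)\).
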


\begin{proof}
Every interval \([x,y]\) in \(P_d\) is Boolean of dimension \(|D(x,y)|\),
so its M\"obius function is
\[
\mu(x,y)=(-1)^{|D(x,y)|}=(-1)^{r(y)-r(x)}
\]
for \(x\preceq y\), and \(0\) otherwise. The same formula holds on
\(P_d^\times\), since restricting from \(P_d\) to \(P_d^\times\) does not
change any interval between nonzero comparable elements.
\end{proof}

\begin{proposition}[Witness incidence matrix as zeta matrix]
Let \(\zeta\) be the zeta function of \(P_d^\times\),
\[
\zeta(x,y)=
\begin{cases}
1,& x\preceq y,\\
0,& \text{otherwise}.
\end{cases}
\]
Index the witness incidence matrix \(A\) by \(P_d^\times\). Then
\(
A_{x,y}=\zeta(x,y).
\)
Equivalently, if both \(w\) and \(\lambda\) are viewed as functions on
\(P_d^\times\), then
\[
\lambda(x)=\sum_{y\succeq x} w(y).
\]
\end{proposition}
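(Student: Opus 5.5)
The plan is to compare the two sides entrywise, using the bijection \(x\leftrightarrow(I(x),\sigma(x))\) between \(P_d^\times\) and \(\mathcal G_d\) and the characterization of the order in Proposition~\ref{prop:order-as-signed-extension}. First, fix \(x,y\in P_d^\times\) and write \(x=x(J,\tau)\), \(y=x(I,\sigma)\). By Theorem~\ref{thm:w-to-lambda}, the witness incidence matrix has entry
\[
A_{(J,\tau),(I,\sigma)}=\mathbf 1\{J\subseteq I,\ \sigma|_J=\tau\}.
\]

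Second, Proposition~\ref{prop:order-as-signed-extension} gives \(x\preceq y\) if and only if \(J\subseteq I\) and \(\sigma|_J=\tau\). I would check this directly from the product order: the condition \(x_i\neq\MM\Rightarrow x_i=y_i\) says that every \(i\in J\) lies in \(I\) with \(\sigma_i=\tau_i\). Coordinates with \(x_i=\MM\) impose no constraint, since \(\MM\) is the minimum of \(E\). Hence \(A_{x,y}=\mathbf 1\{x\preceq y\}=\zeta(x,y)\) entry by entry.

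Third, the functional form follows by substituting into \(\lambda=Aw\):
\[
\lambda(x)=\sum_{y\in P_d^\times}\zeta(x,y)\,w(y)=\sum_{y\succeq x}w(y).
\]
The sum runs over \(y\in P_d^\times\). This causes no restriction, because \(y\succeq x\) with \(x\) nonzero forces \(y\) to be nonzero.

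There is no real obstacle. The only point needing care is that the row index \((J,\tau)\) of \(A\) and the column index \((I,\sigma)\) are transported through the same bijection, so that the matrix is read as a function on \(P_d^\times\times P_d^\times\). One should also note that the orientation (row below column) matches \(\zeta(x,y)\) with \(x\preceq y\) and not the transpose. That orientation is fixed by the fact that \(\lambda_{J,\tau}\) sums over the extensions \((I,\sigma)\) of \((J,\tau)\).
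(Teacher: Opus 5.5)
Your proposal is correct and follows essentially the same route as the paper. Both arguments invoke Proposition~\ref{prop:order-as-signed-extension} to identify \(x\preceq y\) with the signed extension condition \(J\subseteq I,\ \sigma|_J=\tau\), which is exactly the condition defining the nonzero entries of \(A\); your remarks on row/column orientation and on \(y\succeq x\) forcing \(y\neq(\MM,\dots,\MM)\) only make explicit what the paper leaves implicit.
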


\begin{proof}
By Proposition~\ref{prop:order-as-signed-extension}, \(x\preceq y\) is exactly the
signed extension condition \(J\subseteq I\) and \(\sigma|_J=\tau\), which is the
defining condition for the nonzero entries of the witness incidence matrix.
\end{proof}

\begin{corollary}[Witness weights as M\"obius coefficients]
Let \(\lambda\) be a complete signed tail family, viewed as a function on
\(P_d^\times\). Then its witness weights are the M\"obius transform of \(\lambda\):
\[
w(x)=\sum_{y\succeq x}\mu(x,y)\lambda(y),
\qquad x\in P_d^\times.
\]
Equivalently,
\[
w_{I,\sigma}
=
\sum_{K\supseteq I}\ \sum_{\rho|_I=\sigma}
(-1)^{|K|-|I|}\lambda_{K,\rho}.
\]
Thus the complete-case backward recovery is exactly M\"obius inversion on the
signed ternary poset.
\end{corollary}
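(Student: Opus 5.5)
The plan is to invoke the general M\"obius inversion theorem on the finite poset \(P_d^\times\), using the two facts just established: the witness incidence matrix is the zeta matrix, \(A_{x,y}=\zeta(x,y)\), and the M\"obius function is \(\mu(x,y)=(-1)^{r(y)-r(x)}\) for \(x\preceq y\). By Theorem~\ref{thm:w-to-lambda}, \(\lambda=Aw=\zeta w\) as functions on \(P_d^\times\), that is, \(\lambda(x)=\sum_{y\succeq x}w(y)\). In the incidence algebra of a finite poset, \(\mu\) is the two-sided inverse of \(\zeta\): \(\sum_{x\preceq z\preceq y}\mu(x,z)\zeta(z,y)=\delta(x,y)\). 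Hence \(w=\mu\lambda\). To keep the argument self-contained, I would verify this directly by substitution rather than quoting the abstract theorem.

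The direct check proceeds as follows. Substituting \(\lambda(y)=\sum_{z\succeq y}w(z)\) into \(\sum_{y\succeq x}\mu(x,y)\lambda(y)\) and interchanging the finite sums gives
\[
\sum_{z\succeq x}w(z)\sum_{x\preceq y\preceq z}\mu(x,y).
\]
By the Boolean-interval proposition, the inner sum runs over \([x,z]\cong 2^{D(x,z)}\). There it equals
\[
\sum_{S\subseteq D(x,z)}(-1)^{|S|}=(1-1)^{|D(x,z)|},
\]
which is \(1\) if \(z=x\) and \(0\) otherwise. The whole expression therefore collapses to \(w(x)\). The one point needing care is that every \(y\) in the interval is nonzero whenever \(x\) is nonzero, so working on \(P_d^\times\) rather than \(P_d\) loses nothing. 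This is already noted in the M\"obius-function corollary.

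For the explicit formula, I would translate through the bijection \(x=x(I,\sigma)\), \(y=x(K,\rho)\). Proposition~\ref{prop:order-as-signed-extension} turns \(y\succeq x\) into \(K\supseteq I,\ \rho|_I=\sigma\), and \(r(y)-r(x)=|K|-|I|\). This yields the displayed sum. Since the incidence inverse is unique (as \(A\) is unitriangular), this \(w\) coincides with the output of the backward recursion in Proposition~\ref{prop:triangular-inversion}. That identifies the recovery as M\"obius inversion.

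The main obstacle is modest: the only nontrivial ingredient is the alternating-sum identity over Boolean intervals, and it is already supplied by the preceding proposition. The remaining care lies in the bookkeeping of the bijection and the restriction to \(P_d^\times\).
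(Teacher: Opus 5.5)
Your proposal is correct and follows the paper's route. The paper simply notes that the witness incidence matrix is the zeta matrix of \(P_d^\times\), so its inverse is the M\"obius matrix; your substitution argument, using the Boolean-interval identity \(\sum_{S\subseteq D(x,z)}(-1)^{|S|}=(1-1)^{|D(x,z)|}\), makes that same inversion explicit, and your bookkeeping of the bijection and the restriction to \(P_d^\times\) is right.
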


\begin{proof}
The witness incidence matrix is the zeta matrix of \(P_d^\times\), so its inverse
is the M\"obius matrix.
\end{proof}

\section{Example incidence tables}
\label{app:incidence-tables}

This appendix collects larger explicit incidence tables that complement the
main discussion of the witness map \(\lambda = Aw\). The \(d=2\) case already
appears in Section~\ref{sec:tailmap}; here we record the larger signed example
in dimension \(d=3\) together with an upper-tail multivariate EHW incidence
matrix in dimension \(d=4\).

\subsection{Full signed incidence matrix in dimension \(d=3\)}

Order rows and columns by increasing active-set size,
\[
\begin{aligned}
& (1,L),\ (1,U),\ (2,L),\ (2,U),\ (3,L),\ (3,U), \ (12,LL),\ (12,LU),\ (12,UL),\ (12,UU),\\ &\ (13,LL),\ (13,LU),\ (13,UL),\ (13,UU),\ (23,LL),\ (23,LU),\ (23,UL),\ (23,UU),\ (123,LLL),\\ & \ (123,LLU),\ (123,LUL),\ (123,LUU),\ (123,ULL),\ (123,ULU),\ (123,UUL),\ (123,UUU).
\end{aligned}
\]
Then \(\lambda=A_3 w\), where \(A_3\in\{0,1\}^{26\times 26}\) is the signed
incidence matrix with entries
\[
(A_3)_{(J,\tau),(I,\sigma)}=\mathbf 1\{J\subseteq I,\ \sigma|_J=\tau\}.
\]
An explicit matrix form is:
\begingroup
\setlength{\arraycolsep}{4pt}
\renewcommand{\arraystretch}{0.8}
\tiny
\[
A_3 = \left( \begin{array}{*{26}{c}}
1 & 0 & 0 & 0 & 0 & 0 & 1 & 1 & 0 & 0 & 1 & 1 & 0 & 0 & 0 & 0 & 0 & 0 & 1 & 1 & 1 & 1 & 0 & 0 & 0 & 0 \\
0 & 1 & 0 & 0 & 0 & 0 & 0 & 0 & 1 & 1 & 0 & 0 & 1 & 1 & 0 & 0 & 0 & 0 & 0 & 0 & 0 & 0 & 1 & 1 & 1 & 1 \\
0 & 0 & 1 & 0 & 0 & 0 & 1 & 0 & 1 & 0 & 0 & 0 & 0 & 0 & 1 & 1 & 0 & 0 & 1 & 1 & 0 & 0 & 1 & 1 & 0 & 0 \\
0 & 0 & 0 & 1 & 0 & 0 & 0 & 1 & 0 & 1 & 0 & 0 & 0 & 0 & 0 & 0 & 1 & 1 & 0 & 0 & 1 & 1 & 0 & 0 & 1 & 1 \\
0 & 0 & 0 & 0 & 1 & 0 & 0 & 0 & 0 & 0 & 1 & 0 & 1 & 0 & 1 & 0 & 1 & 0 & 1 & 0 & 1 & 0 & 1 & 0 & 1 & 0 \\
0 & 0 & 0 & 0 & 0 & 1 & 0 & 0 & 0 & 0 & 0 & 1 & 0 & 1 & 0 & 1 & 0 & 1 & 0 & 1 & 0 & 1 & 0 & 1 & 0 & 1 \\
0 & 0 & 0 & 0 & 0 & 0 & 1 & 0 & 0 & 0 & 0 & 0 & 0 & 0 & 0 & 0 & 0 & 0 & 1 & 1 & 0 & 0 & 0 & 0 & 0 & 0 \\
0 & 0 & 0 & 0 & 0 & 0 & 0 & 1 & 0 & 0 & 0 & 0 & 0 & 0 & 0 & 0 & 0 & 0 & 0 & 0 & 1 & 1 & 0 & 0 & 0 & 0 \\
0 & 0 & 0 & 0 & 0 & 0 & 0 & 0 & 1 & 0 & 0 & 0 & 0 & 0 & 0 & 0 & 0 & 0 & 0 & 0 & 0 & 0 & 1 & 1 & 0 & 0 \\
0 & 0 & 0 & 0 & 0 & 0 & 0 & 0 & 0 & 1 & 0 & 0 & 0 & 0 & 0 & 0 & 0 & 0 & 0 & 0 & 0 & 0 & 0 & 0 & 1 & 1 \\
0 & 0 & 0 & 0 & 0 & 0 & 0 & 0 & 0 & 0 & 1 & 0 & 0 & 0 & 0 & 0 & 0 & 0 & 1 & 0 & 1 & 0 & 0 & 0 & 0 & 0 \\
0 & 0 & 0 & 0 & 0 & 0 & 0 & 0 & 0 & 0 & 0 & 1 & 0 & 0 & 0 & 0 & 0 & 0 & 0 & 1 & 0 & 1 & 0 & 0 & 0 & 0 \\
0 & 0 & 0 & 0 & 0 & 0 & 0 & 0 & 0 & 0 & 0 & 0 & 1 & 0 & 0 & 0 & 0 & 0 & 0 & 0 & 0 & 0 & 1 & 0 & 1 & 0 \\
0 & 0 & 0 & 0 & 0 & 0 & 0 & 0 & 0 & 0 & 0 & 0 & 0 & 1 & 0 & 0 & 0 & 0 & 0 & 0 & 0 & 0 & 0 & 1 & 0 & 1 \\
0 & 0 & 0 & 0 & 0 & 0 & 0 & 0 & 0 & 0 & 0 & 0 & 0 & 0 & 1 & 0 & 0 & 0 & 1 & 0 & 0 & 0 & 1 & 0 & 0 & 0 \\
0 & 0 & 0 & 0 & 0 & 0 & 0 & 0 & 0 & 0 & 0 & 0 & 0 & 0 & 0 & 1 & 0 & 0 & 0 & 1 & 0 & 0 & 0 & 1 & 0 & 0 \\
0 & 0 & 0 & 0 & 0 & 0 & 0 & 0 & 0 & 0 & 0 & 0 & 0 & 0 & 0 & 0 & 1 & 0 & 0 & 0 & 1 & 0 & 0 & 0 & 1 & 0 \\
0 & 0 & 0 & 0 & 0 & 0 & 0 & 0 & 0 & 0 & 0 & 0 & 0 & 0 & 0 & 0 & 0 & 1 & 0 & 0 & 0 & 1 & 0 & 0 & 0 & 1 \\
0 & 0 & 0 & 0 & 0 & 0 & 0 & 0 & 0 & 0 & 0 & 0 & 0 & 0 & 0 & 0 & 0 & 0 & 1 & 0 & 0 & 0 & 0 & 0 & 0 & 0 \\
0 & 0 & 0 & 0 & 0 & 0 & 0 & 0 & 0 & 0 & 0 & 0 & 0 & 0 & 0 & 0 & 0 & 0 & 0 & 1 & 0 & 0 & 0 & 0 & 0 & 0 \\
0 & 0 & 0 & 0 & 0 & 0 & 0 & 0 & 0 & 0 & 0 & 0 & 0 & 0 & 0 & 0 & 0 & 0 & 0 & 0 & 1 & 0 & 0 & 0 & 0 & 0 \\
0 & 0 & 0 & 0 & 0 & 0 & 0 & 0 & 0 & 0 & 0 & 0 & 0 & 0 & 0 & 0 & 0 & 0 & 0 & 0 & 0 & 1 & 0 & 0 & 0 & 0 \\
0 & 0 & 0 & 0 & 0 & 0 & 0 & 0 & 0 & 0 & 0 & 0 & 0 & 0 & 0 & 0 & 0 & 0 & 0 & 0 & 0 & 0 & 1 & 0 & 0 & 0 \\
0 & 0 & 0 & 0 & 0 & 0 & 0 & 0 & 0 & 0 & 0 & 0 & 0 & 0 & 0 & 0 & 0 & 0 & 0 & 0 & 0 & 0 & 0 & 1 & 0 & 0 \\
0 & 0 & 0 & 0 & 0 & 0 & 0 & 0 & 0 & 0 & 0 & 0 & 0 & 0 & 0 & 0 & 0 & 0 & 0 & 0 & 0 & 0 & 0 & 0 & 1 & 0 \\
0 & 0 & 0 & 0 & 0 & 0 & 0 & 0 & 0 & 0 & 0 & 0 & 0 & 0 & 0 & 0 & 0 & 0 & 0 & 0 & 0 & 0 & 0 & 0 & 0 & 1
\end{array} \right)
\]
\endgroup

\subsection{Upper-tail EHW multivariate incidence matrix in dimension \(d=4\)}
For the upper-tail-only case, order rows and columns by
\[
1,\ 2,\ 3,\ 4,\ 12,\ 13,\ 14,\ 23,\ 24,\ 34,\ 123,\ 124,\ 134,\ 234,\ 1234.
\]
Then
$\lambda^{U}=A^{U}_4\,w^{U}$ with entries $(A^{U}_4)_{J,I}=\mathbf 1\{J\subseteq I\}$,
and therefore
$w^{U}=(A^{U}_4)^{-1}\lambda^{U}$.
An explicit matrix form is:

\begingroup
\setlength{\arraycolsep}{4pt}
\renewcommand{\arraystretch}{0.8}
\tiny
\[
A^{U}_4 = \left( \begin{array}{*{15}{c}}
1 & 0 & 0 & 0 & 1 & 1 & 1 & 0 & 0 & 0 & 1 & 1 & 1 & 0 & 1 \\
0 & 1 & 0 & 0 & 1 & 0 & 0 & 1 & 1 & 0 & 1 & 1 & 0 & 1 & 1 \\
0 & 0 & 1 & 0 & 0 & 1 & 0 & 1 & 0 & 1 & 1 & 0 & 1 & 1 & 1 \\
0 & 0 & 0 & 1 & 0 & 0 & 1 & 0 & 1 & 1 & 0 & 1 & 1 & 1 & 1 \\
0 & 0 & 0 & 0 & 1 & 0 & 0 & 0 & 0 & 0 & 1 & 1 & 0 & 0 & 1 \\
0 & 0 & 0 & 0 & 0 & 1 & 0 & 0 & 0 & 0 & 1 & 0 & 1 & 0 & 1 \\
0 & 0 & 0 & 0 & 0 & 0 & 1 & 0 & 0 & 0 & 0 & 1 & 1 & 0 & 1 \\
0 & 0 & 0 & 0 & 0 & 0 & 0 & 1 & 0 & 0 & 1 & 0 & 0 & 1 & 1 \\
0 & 0 & 0 & 0 & 0 & 0 & 0 & 0 & 1 & 0 & 0 & 1 & 0 & 1 & 1 \\
0 & 0 & 0 & 0 & 0 & 0 & 0 & 0 & 0 & 1 & 0 & 0 & 1 & 1 & 1 \\
0 & 0 & 0 & 0 & 0 & 0 & 0 & 0 & 0 & 0 & 1 & 0 & 0 & 0 & 1 \\
0 & 0 & 0 & 0 & 0 & 0 & 0 & 0 & 0 & 0 & 0 & 1 & 0 & 0 & 1 \\
0 & 0 & 0 & 0 & 0 & 0 & 0 & 0 & 0 & 0 & 0 & 0 & 1 & 0 & 1 \\
0 & 0 & 0 & 0 & 0 & 0 & 0 & 0 & 0 & 0 & 0 & 0 & 0 & 1 & 1 \\
0 & 0 & 0 & 0 & 0 & 0 & 0 & 0 & 0 & 0 & 0 & 0 & 0 & 0 & 1
\end{array} \right)
\]
\endgroup

\section{Growth comparison with canonical-mixture counts}
\label{app:bell-comparison}

For comparison with the canonical-mixture language of~\cite{MB11}, let
\(B_d\) denote the \(d\)-th Bell number, that is, the number of partitions of
a \(d\)-element set into nonempty pairwise disjoint blocks. In the basic MB11
setting, \(B_d\) is also the number of canonical copulas when
countermonotonicity is not counted separately. It is also useful to compare
with the signed blockwise extension in which a common factor block may use
either \(f\) or its complement \(1-f\) on each active coordinate. Let
\(S_d\) denote the corresponding signed canonical count, here taken from OEIS
sequence A004211, \citep{OEISA004211}. Table~\ref{tab:3dminus1-vs-bell-signed} compares both
canonical-mixture counts with the witness count \(3^d-1\) of noncentral
ternary cells, equivalently of witness generators. From \(d\ge 9\) onward,
the witness count is already smaller than the Bell number, and relative to the
signed extension the gap is sharper by orders of magnitude. For the present
paper, smaller values of these ratios are favorable: they indicate that the
witness parametrization remains comparatively lean relative to both the
unsigned MB11 canonical family and its signed \(f\)/\(1-f\) extension as the
dimension grows.

\begin{table}[H]
\centering
\small
\caption{Comparison of the witness count \(3^d-1\) with the unsigned Bell
count \(B_d\) and the signed canonical-mixture comparator \(S_d\) from OEIS
A004211.}
\label{tab:3dminus1-vs-bell-signed}
\label{tab:APP-B-growth-comparison}
{\tiny
\begin{tabular}{r r r r r r}
\hline
\(d\) & \(3^d-1\) & \(B_d\) & \((3^d-1)/B_d\) & \(S_d\) & \((3^d-1)/S_d\) \\
\hline
2 & 8 & 2 & 4.000000 & 3 & 2.666667 \\
3 & 26 & 5 & 5.200000 & 11 & 2.363636 \\
4 & 80 & 15 & 5.333333 & 49 & 1.632653 \\
5 & 242 & 52 & 4.653846 & 257 & 0.941634 \\
6 & 728 & 203 & 3.586207 & 1539 & 0.473034 \\
7 & 2186 & 877 & 2.492588 & 10299 & 0.212254 \\
8 & 6560 & 4140 & 1.584541 & 75905 & 0.086424 \\
9 & 19682 & 21147 & 0.930723 & 609441 & 0.032295 \\
10 & 59048 & 115975 & 0.509144 & 5284451 & 0.011174 \\
11 & 177146 & 678570 & 0.261058 & 49134923 & 0.003605 \\
12 & 531440 & 4213597 & 0.126125 & 487026929 & 0.001091 \\
13 & 1594322 & 27644437 & 0.057672 & 5120905441 & 0.000311 \\
14 & 4782968 & 190899322 & 0.025055 & 56878092067 & 0.000084 \\
15 & 14348906 & 1382958545 & 0.010376 & 664920021819 & 0.000022 \\
16 & 43046720 & 10480142147 & 0.004107 & 8155340557697 & 0.000005 \\
17 & 129140162 & 82864869804 & 0.001558 & 104652541401025 & 0.000001 \\
18 & 387420488 & 682076806159 & 0.000568 & 1401572711758403 & 2.764184e-07 \\
19 & 1162261466 & 5832742205057 & 0.000199 & 19546873773314571 & 5.946022e-08 \\
20 & 3486784400 & 51724158235372 & 0.000067 & 283314887789276721 & 1.230710e-08 \\
\hline
\end{tabular}
}
\end{table}

\section{Hasse-type diagram for the signed witness index family in dimension $d=3$}
\label{app:hasse}

\begingroup
\setlength{\intextsep}{0.2\baselineskip}
\setlength{\abovecaptionskip}{3pt}
\setlength{\belowcaptionskip}{0pt}
\begin{figure}[htbp]
\centering
\includegraphics[width=1\textwidth,clip]{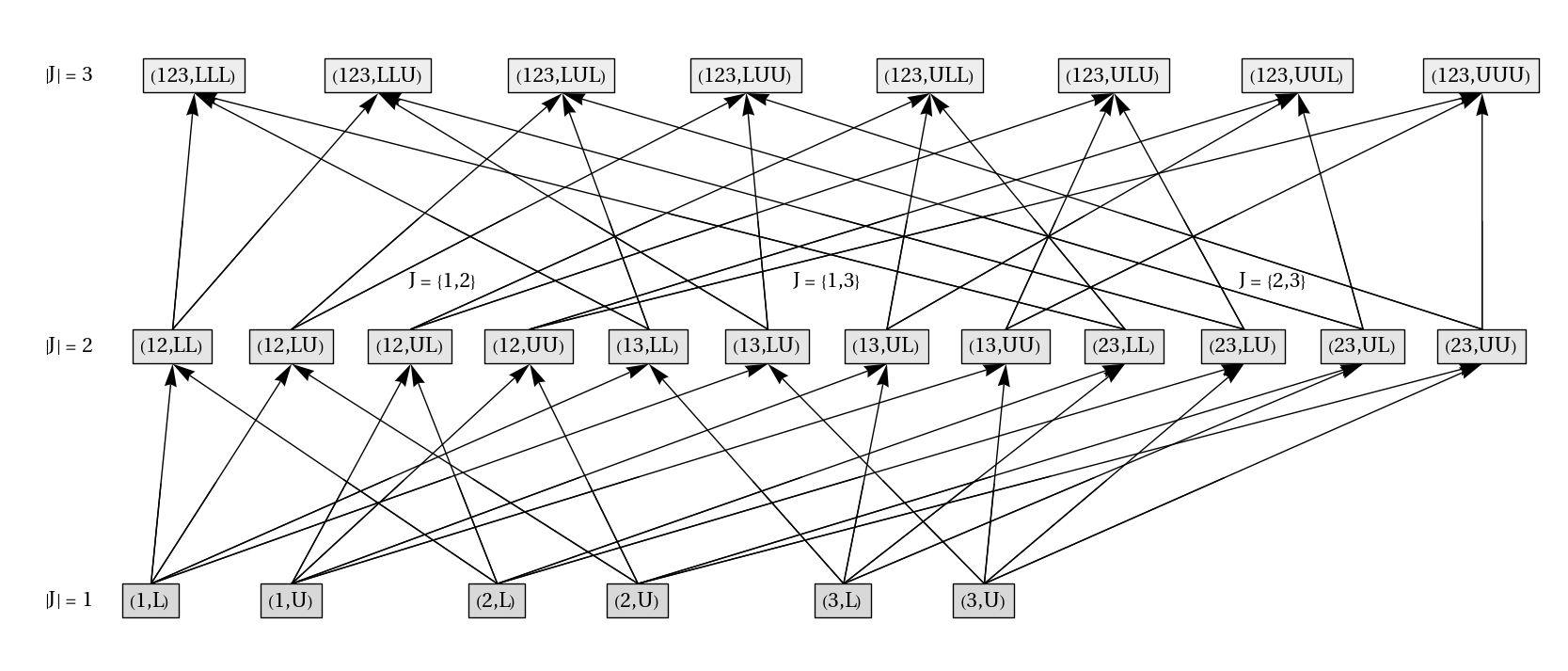}
\caption{Hasse-type diagram for the signed witness index family in dimension $d=3$. Nodes are indexed by signed active-set pairs $(J,\tau)$ and colored by active-set size $|J|$. Directed edges point to immediate supersets obtained by adding one signed coordinate. Thus the arrows encode the signed inclusion structure, while the recovery of witness weights in the complete case proceeds in the opposite direction, from the top layer to singleton indices by triangular back-substitution. The diagram makes the triangular back-substitution order underlying $\lambda = Aw$ explicit. Rendered with Wolfram Mathematica 14.3.}
\label{fig:hasse}
\end{figure}
\endgroup

\section{Python listing of the witness solver}
\label{app:python-witness-solver}

This appendix records the accompanying Python implementation used in the
computational parts of this paper.
The code is provided strictly as is, without any express or implied warranty,
without support or maintenance, and without any liability for errors,
omissions, failures, or consequences arising from its use. No commitment is
made to provide updates, corrections, or user support.

{\fontsize{5.17}{5.97}\selectfont
\setlength{\columnsep}{0.8em}
\begin{multicols}{2}
\begingroup
\setlength{\hfuzz}{100pt}
\begin{verbatim}
"""
Universal solver for geometric witness copulas.

This module supports:
    - EHW                (upper-tail pairwise specifications)
    - EHW-signed         (signed pairwise specifications)
    - EHW-MV             (upper-tail multivariate specifications)
    - EHW-signed-MV      (signed multivariate specifications)
    - complete, partial, noisy, or inconsistent target families

Core variables are witness weights w_{I,sigma}, indexed by active coordinate
sets I and sign patterns sigma on I.  The same code handles:
    - direct triangular inversion for complete specifications
    - LP feasibility for partial specifications
    - weighted L1 approximate recovery for noisy/infeasible targets
    - the bridge w <-> q to finite-p0 ternary (or upper-tail) cell masses

Conventions
-----------
1. Coordinates are 1-based: active sets are tuples like (1, 2, 5).
2. Sign alphabets are configured by `available_signs`:
       ('U',)         -> upper-tail-only frameworks (EHW, EHW-MV)
       ('L', 'U')     -> signed frameworks (EHW-signed, EHW-signed-MV)
3. A tail key is stored as:
       ((i1, ..., ik), (s1, ..., sk))
   where the coordinates are strictly increasing.
4. Exact margins are enforced through singleton coefficients:
       lambda_{ {i}, s } = 1   for each allowed sign s.

The module is intended as a robust Appendix-level implementation.
"""
from __future__ import annotations

from itertools import combinations, product
from typing import Dict, Iterable, List, Mapping, Optional, Sequence, Tuple

import numpy as np
from scipy.optimize import linprog

Sign = str
ActiveSet = Tuple[int, ...]
SignPattern = Tuple[str, ...]
TailKey = Tuple[ActiveSet, SignPattern]
State = Tuple[str, ...]


# ---------------------------------------------------------------------
# Basic key utilities
# ---------------------------------------------------------------------
def make_key(active: Sequence[int], signs: Sequence[str]) -> TailKey:
    """
    Canonicalize a tail key.

    Parameters
    ----------
    active:
        Active coordinate indices, in any order.
    signs:
        Sign pattern on the active coordinates.

    Returns
    -------
    TailKey
        Canonical key with sorted coordinates.
    """
    active = tuple(int(i) for i in active)
    signs = tuple(str(s) for s in signs)
    if len(active) == 0:
        raise ValueError("Active sets must be nonempty.")
    if len(active) != len(signs):
        raise ValueError("Active set and sign pattern must have the same length.")
    pairs = sorted(zip(active, signs), key=lambda x: x[0])
    active_sorted = tuple(i for i, _ in pairs)
    signs_sorted = tuple(s for _, s in pairs)
    if len(set(active_sorted)) != len(active_sorted):
        raise ValueError("Active coordinates must be distinct.")
    return active_sorted, signs_sorted


def format_key(key: TailKey) -> str:
    """Human-readable rendering of a tail key."""
    active, signs = key
    return f"{active}:{''.join(signs)}"


def validate_available_signs(available_signs: Sequence[str]) -> Tuple[str, ...]:
    signs = tuple(str(s) for s in available_signs)
    if len(signs) == 0:
        raise ValueError("At least one sign must be allowed.")
    if len(set(signs)) != len(signs):
        raise ValueError("available_signs must not contain duplicates.")
    if "M" in signs:
        raise ValueError("'M' is reserved for the finite-grid middle state.")
    return signs


def enumerate_keys(
    d: int,
    available_signs: Sequence[str] = ("L", "U"),
    orders: Optional[Iterable[int]] = None,
) -> List[TailKey]:
    """
    Enumerate all witness-generator / signed-tail keys for a given dimension.

    For |available_signs| = s, the total number of keys is:
        sum_{k=1}^d C(d, k) s^k.
    """
    if d < 2:
        raise ValueError("The solver is intended for d >= 2.")
    signs = validate_available_signs(available_signs)

    if orders is None:
        orders = range(1, d + 1)

    keys: List[TailKey] = []
    for k in orders:
        if not (1 <= int(k) <= d):
            raise ValueError("Requested orders must lie between 1 and d.")
        for active in combinations(range(1, d + 1), int(k)):
            for sigma in product(signs, repeat=int(k)):
                keys.append((active, sigma))
    return keys


def singleton_margin_targets(
    d: int,
    available_signs: Sequence[str] = ("L", "U"),
) -> Dict[TailKey, float]:
    """
    Exact-margin constraints represented as singleton tail coefficients.

    In the signed witness framework these are:
        lambda_{ {i}, s } = 1
    for every coordinate i and every allowed sign s.
    """
    signs = validate_available_signs(available_signs)
    return {((i,), (s,)): 1.0 for i in range(1, d + 1) for s in signs}


def complete_spec_template(
    d: int,
    available_signs: Sequence[str] = ("L", "U"),
    singleton_value: float = 1.0,
    default_other: float = 0.0,
) -> Dict[TailKey, float]:
    """
    Create a complete specification template.

    Singletons are set to singleton_value, all higher-order coefficients to
    default_other.
    """
    spec: Dict[TailKey, float] = {}
    for key in enumerate_keys(d, available_signs):
        active, _ = key
        spec[key] = float(singleton_value if len(active) == 1 else default_other)
    return spec


# ---------------------------------------------------------------------
# Linear maps
# ---------------------------------------------------------------------
def build_tail_matrix(
    d: int,
    target_keys: Sequence[TailKey],
    available_signs: Sequence[str] = ("L", "U"),
    generator_keys: Optional[Sequence[TailKey]] = None,
) -> Tuple[np.ndarray, List[TailKey]]:
    """
    Build the linear map A such that lambda = A w on the requested target keys.

    A[row, col] = 1 if the generator key (I, sigma) contributes to the target
    key (J, tau), i.e.
        J subseteq I
        sigma|_J = tau
    and 0 otherwise.
    """
    if generator_keys is None:
        generator_keys = enumerate_keys(d, available_signs)
    generator_keys = list(generator_keys)

    A = np.zeros((len(target_keys), len(generator_keys)), dtype=float)
    for r, target in enumerate(target_keys):
        J, tau = make_key(*target)
        tau_map = dict(zip(J, tau))
        J_set = set(J)

        for c, gen in enumerate(generator_keys):
            I, sigma = make_key(*gen)
            if not J_set.issubset(I):
                continue
            sigma_map = dict(zip(I, sigma))
            if all(sigma_map[j] == tau_map[j] for j in J):
                A[r, c] = 1.0
    return A, generator_keys


def tail_values_from_weights(
    d: int,
    weights: Mapping[TailKey, float],
    available_signs: Sequence[str] = ("L", "U"),
    target_keys: Optional[Sequence[TailKey]] = None,
) -> Dict[TailKey, float]:
    """
    Evaluate the linear map w -> lambda on the requested target keys.
    """
    if target_keys is None:
        target_keys = enumerate_keys(d, available_signs)
    target_keys = [make_key(*k) for k in target_keys]
    A, generator_keys = build_tail_matrix(
        d=d,
        target_keys=target_keys,
        available_signs=available_signs,
    )
    wvec = np.array([float(weights.get(g, 0.0)) for g in generator_keys], dtype=float)
    vals = A @ wvec
    return {k: float(v) for k, v in zip(target_keys, vals)}


def total_noncentral_mass(weights: Mapping[TailKey, float]) -> float:
    """Return sum w_{I,sigma}."""
    return float(sum(float(v) for v in weights.values()))


def admissibility_bound(weights: Mapping[TailKey, float]) -> float:
    """
    Return the largest admissible p0 for which the central mass remains
    nonnegative:
        p0 <= 1 / sum(w)
    """
    s = total_noncentral_mass(weights)
    return np.inf if s <= 0.0 else float(1.0 / s)


def check_exact_margins(
    d: int,
    weights: Mapping[TailKey, float],
    available_signs: Sequence[str] = ("L", "U"),
    tol: float = 1e-9,
) -> bool:
    """
    Check singleton margin equalities lambda_{ {i}, s } = 1.
    """
    lambdas = tail_values_from_weights(
        d=d,
        weights=weights,
        available_signs=available_signs,
        target_keys=list(singleton_margin_targets(d, available_signs).keys()),
    )
    return all(abs(v - 1.0) <= tol for v in lambdas.values())


# ---------------------------------------------------------------------
# Direct inversion in the complete case
# ---------------------------------------------------------------------
def direct_inversion_complete(
    d: int,
    lambda_complete: Mapping[TailKey, float],
    available_signs: Sequence[str] = ("L", "U"),
    tol: float = 1e-12,
) -> Dict[TailKey, float]:
    """
    Recover witness weights by triangular back-substitution.

    This requires a complete specification, i.e. values for all keys in the
    generator/tail family associated with the chosen sign alphabet.
    """
    generator_keys = enumerate_keys(d, available_signs)
    lambda_complete = {make_key(*k): float(v) for k, v in lambda_complete.items()}

    missing = [k for k in generator_keys if k not in lambda_complete]
    if missing:
        head = ", ".join(format_key(k) for k in missing[:5])
        raise ValueError(
            f"Complete specification is missing {len(missing)} coefficients. "
            f"First missing keys: {head}"
        )

    weights: Dict[TailKey, float] = {}
    keys_by_order = {
        k: [g for g in generator_keys if len(g[0]) == k]
        for k in range(1, d + 1)
    }

    for order in range(d, 0, -1):
        for key in keys_by_order[order]:
            I, sigma = key
            value = float(lambda_complete[key])
            correction = 0.0

            I_set = set(I)
            for super_key, super_weight in weights.items():
                K, rho = super_key
                if len(K) <= len(I):
                    continue
                if not I_set.issubset(K):
                    continue
                rho_map = dict(zip(K, rho))
                if all(rho_map[i] == s for i, s in zip(I, sigma)):
                    correction += float(super_weight)

            x = value - correction
            if abs(x) < tol:
                x = 0.0
            weights[key] = float(x)

    return weights


def complete_recovery_report(
    d: int,
    lambda_complete: Mapping[TailKey, float],
    available_signs: Sequence[str] = ("L", "U"),
    p0: Optional[float] = None,
    tol: float = 1e-9,
) -> Dict[str, object]:
    """
    Recover a complete witness solution and return diagnostics.
    """
    weights = direct_inversion_complete(
        d=d,
        lambda_complete=lambda_complete,
        available_signs=available_signs,
        tol=tol / 100.0,
    )

    recovered = tail_values_from_weights(
        d=d,
        weights=weights,
        available_signs=available_signs,
        target_keys=list(enumerate_keys(d, available_signs)),
    )
    residual = max(
        abs(recovered[k] - float(lambda_complete[make_key(*k)]))
        for k in recovered
    )

    nonnegative = all(v >= -tol for v in weights.values())
    margins_ok = check_exact_margins(d, weights, available_signs, tol=tol)

    report: Dict[str, object] = {
        "success": bool(nonnegative and margins_ok),
        "weights": weights,
        "lambda_recovered": recovered,
        "max_abs_lambda_residual": float(residual),
        "nonnegative": bool(nonnegative),
        "margins_ok": bool(margins_ok),
        "admissible_p0_max": admissibility_bound(weights),
    }

    if p0 is not None:
        q = build_q_from_weights(d, weights, p0, available_signs)
        report["q"] = q
        report["central_mass"] = float(q[tuple("M" for _ in range(d))])
        report["admissible_at_p0"] = bool(report["central_mass"] >= -tol)

    return report


# ---------------------------------------------------------------------
# LP formulations
# ---------------------------------------------------------------------
def _merge_exact_targets(
    d: int,
    target_lambdas: Mapping[TailKey, float],
    available_signs: Sequence[str],
    enforce_margins: bool,
    tol: float = 1e-12,
) -> Dict[TailKey, float]:
    """
    Merge user-specified coefficients with exact singleton margins.
    """
    merged: Dict[TailKey, float] = {}
    if enforce_margins:
        merged.update(singleton_margin_targets(d, available_signs))
    for k, v in target_lambdas.items():
        key = make_key(*k)
        val = float(v)
        if key in merged and abs(merged[key] - val) > tol:
            raise ValueError(
                f"Target {format_key(key)} conflicts with an exact margin constraint."
            )
        merged[key] = val
    return merged


def build_lp_model(
    d: int,
    target_lambdas: Mapping[TailKey, float],
    available_signs: Sequence[str] = ("L", "U"),
    enforce_margins: bool = True,
    p0: Optional[float] = None,
    mode: str = "feasibility",
    approx_weights: Optional[Mapping[TailKey, float]] = None,
    objective_costs: Optional[Mapping[TailKey, float]] = None,
) -> Dict[str, object]:
    """
    Build the LP model for one of the supported modes:
        - 'feasibility'
        - 'min_total_mass'
        - 'l1'

    In 'l1' mode the provided target_lambdas are treated as approximate targets,
    while exact margins remain exact (if enforce_margins=True).
    """
    signs = validate_available_signs(available_signs)
    generator_keys = enumerate_keys(d, signs)
    n_w = len(generator_keys)

    if mode not in {"feasibility", "min_total_mass", "l1"}:
        raise ValueError("mode must be 'feasibility', 'min_total_mass', or 'l1'.")

    if mode in {"feasibility", "min_total_mass"}:
        exact_targets = _merge_exact_targets(
            d=d,
            target_lambdas=target_lambdas,
            available_signs=signs,
            enforce_margins=enforce_margins,
        )
        eq_keys = list(exact_targets.keys())
        A_eq, _ = build_tail_matrix(
            d=d,
            target_keys=eq_keys,
            available_signs=signs,
            generator_keys=generator_keys,
        )
        b_eq = np.array([exact_targets[k] for k in eq_keys], dtype=float)

        c = np.zeros(n_w, dtype=float)
        if mode == "min_total_mass":
            if objective_costs is None:
                c[:] = 1.0
            else:
                c[:] = np.array(
                    [float(objective_costs.get(k, 0.0)) for k in generator_keys],
                    dtype=float,
                )

        A_ub = []
        b_ub = []
        if p0 is not None:
            if p0 <= 0:
                raise ValueError("p0 must be positive.")
            A_ub.append(np.ones(n_w, dtype=float))
            b_ub.append(float(1.0 / p0))

        bounds = [(0.0, None)] * n_w
        return {
            "mode": mode,
            "generator_keys": generator_keys,
            "A_eq": A_eq,
            "b_eq": b_eq,
            "A_ub": np.array(A_ub, dtype=float) if A_ub else None,
            "b_ub": np.array(b_ub, dtype=float) if b_ub else None,
            "c": c,
            "bounds": bounds,
            "n_w": n_w,
            "eq_keys": eq_keys,
            "available_signs": signs,
            "p0": p0,
        }

    # L1 approximate recovery
    exact_margin_targets = (
        singleton_margin_targets(d, signs) if enforce_margins else {}
    )
    margin_keys = list(exact_margin_targets.keys())
    approx_keys = [make_key(*k) for k in target_lambdas.keys()]

    m = len(approx_keys)
    total_vars = n_w + 2 * m
    slack_pos_offset = n_w
    slack_neg_offset = n_w + m

    rows: List[np.ndarray] = []
    rhs: List[float] = []

    if margin_keys:
        A_margin, _ = build_tail_matrix(
            d=d,
            target_keys=margin_keys,
            available_signs=signs,
            generator_keys=generator_keys,
        )
        for r, key in enumerate(margin_keys):
            row = np.zeros(total_vars, dtype=float)
            row[:n_w] = A_margin[r]
            rows.append(row)
            rhs.append(float(exact_margin_targets[key]))

    A_approx, _ = build_tail_matrix(
        d=d,
        target_keys=approx_keys,
        available_signs=signs,
        generator_keys=generator_keys,
    )
    for r, key in enumerate(approx_keys):
        row = np.zeros(total_vars, dtype=float)
        row[:n_w] = A_approx[r]
        row[slack_pos_offset + r] = -1.0
        row[slack_neg_offset + r] = 1.0
        rows.append(row)
        rhs.append(float(target_lambdas[key]))

    c = np.zeros(total_vars, dtype=float)
    if approx_weights is None:
        approx_weights = {k: 1.0 for k in approx_keys}
    for r, key in enumerate(approx_keys):
        wt = float(approx_weights.get(key, 1.0))
        c[slack_pos_offset + r] = wt
        c[slack_neg_offset + r] = wt

    A_ub = []
    b_ub = []
    if p0 is not None:
        if p0 <= 0:
            raise ValueError("p0 must be positive.")
        row = np.zeros(total_vars, dtype=float)
        row[:n_w] = 1.0
        A_ub.append(row)
        b_ub.append(float(1.0 / p0))

    bounds = [(0.0, None)] * total_vars
    return {
        "mode": mode,
        "generator_keys": generator_keys,
        "A_eq": np.array(rows, dtype=float) if rows else None,
        "b_eq": np.array(rhs, dtype=float) if rhs else None,
        "A_ub": np.array(A_ub, dtype=float) if A_ub else None,
        "b_ub": np.array(b_ub, dtype=float) if b_ub else None,
        "c": c,
        "bounds": bounds,
        "n_w": n_w,
        "approx_keys": approx_keys,
        "available_signs": signs,
        "p0": p0,
    }


def solve_witness_lp(
    d: int,
    target_lambdas: Mapping[TailKey, float],
    available_signs: Sequence[str] = ("L", "U"),
    enforce_margins: bool = True,
    p0: Optional[float] = None,
    mode: str = "feasibility",
    approx_weights: Optional[Mapping[TailKey, float]] = None,
    objective_costs: Optional[Mapping[TailKey, float]] = None,
    tol: float = 1e-9,
) -> Dict[str, object]:
    """
    Solve one of the LP problems supported by the witness framework.

    Returns
    -------
    dict
        Solution dictionary with weights, diagnostics, and (if p0 is given)
        the induced finite-grid masses q.
    """
    model = build_lp_model(
        d=d,
        target_lambdas=target_lambdas,
        available_signs=available_signs,
        enforce_margins=enforce_margins,
        p0=p0,
        mode=mode,
        approx_weights=approx_weights,
        objective_costs=objective_costs,
    )

    res = linprog(
        c=model["c"],
        A_eq=model["A_eq"],
        b_eq=model["b_eq"],
        A_ub=model["A_ub"],
        b_ub=model["b_ub"],
        bounds=model["bounds"],
        method="highs",
    )

    out: Dict[str, object] = {
        "success": bool(res.success),
        "message": res.message,
        "status": int(res.status),
        "mode": mode,
    }
    if not res.success:
        return out

    x = res.x
    generator_keys = model["generator_keys"]
    n_w = int(model["n_w"])

    all_weights = {k: float(x[i]) for i, k in enumerate(generator_keys)}
    sparse_weights = {k: v for k, v in all_weights.items() if abs(v) > tol}

    out["objective_value"] = float(res.fun)
    out["all_weights"] = all_weights
    out["weights"] = sparse_weights
    out["admissible_p0_max"] = admissibility_bound(all_weights)

    if p0 is not None:
        q = build_q_from_weights(d, all_weights, p0, available_signs)
        out["q"] = q
        out["central_mass"] = float(q[tuple("M" for _ in range(d))])

    # Reconstruct only the coefficients that were actually targeted.
    target_keys = [make_key(*k) for k in target_lambdas.keys()]
    out["lambda_recovered"] = tail_values_from_weights(
        d=d,
        weights=all_weights,
        available_signs=available_signs,
        target_keys=target_keys,
    )
    out["margins_ok"] = check_exact_margins(
        d=d,
        weights=all_weights,
        available_signs=available_signs,
        tol=tol,
    )

    if mode == "l1":
        approx_keys = model["approx_keys"]
        m = len(approx_keys)
        slack_pos_offset = n_w
        slack_neg_offset = n_w + m
        slack_plus = {
            approx_keys[i]: float(x[slack_pos_offset + i]) for i in range(m)
        }
        slack_minus = {
            approx_keys[i]: float(x[slack_neg_offset + i]) for i in range(m)
        }
        out["slack_plus"] = slack_plus
        out["slack_minus"] = slack_minus
        out["absolute_errors"] = {
            key: float(slack_plus[key] + slack_minus[key]) for key in approx_keys
        }

    return out


# ---------------------------------------------------------------------
# Finite-p0 bridge
# ---------------------------------------------------------------------
def build_q_from_weights(
    d: int,
    weights: Mapping[TailKey, float],
    p0: float,
    available_signs: Sequence[str] = ("L", "U"),
) -> Dict[State, float]:
    """
    Build finite-p0 cell masses q from witness weights.

    The state alphabet is:
        {M} union available_signs
    ordered naturally as L, M, U when present.
    """
    if p0 <= 0:
        raise ValueError("p0 must be positive.")
    signs = validate_available_signs(available_signs)

    alphabet = [s for s in ("L", "M", "U") if (s == "M" or s in signs)]
    q: Dict[State, float] = {}

    for state in product(alphabet, repeat=d):
        active = tuple(i + 1 for i, s in enumerate(state) if s != "M")
        if not active:
            continue
        sigma = tuple(s for s in state if s != "M")
        if any(s not in signs for s in sigma):
            continue
        q[state] = float(p0 * weights.get((active, sigma), 0.0))

    central = tuple("M" for _ in range(d))
    q[central] = float(1.0 - p0 * total_noncentral_mass(weights))
    return q


def build_weights_from_q(
    d: int,
    q: Mapping[State, float],
    p0: float,
    available_signs: Sequence[str] = ("L", "U"),
    tol: float = 1e-12,
) -> Dict[TailKey, float]:
    """
    Recover witness weights from noncentral finite-p0 cell masses.
    """
    if p0 <= 0:
        raise ValueError("p0 must be positive.")
    signs = validate_available_signs(available_signs)

    weights: Dict[TailKey, float] = {}
    for state, mass in q.items():
        if len(state) != d:
            raise ValueError("Each state in q must have length d.")
        state = tuple(state)
        active = tuple(i + 1 for i, s in enumerate(state) if s != "M")
        if not active:
            continue
        sigma = tuple(s for s in state if s != "M")
        if any(s not in signs for s in sigma):
            continue
        value = float(mass) / p0
        if abs(value) > tol:
            weights[(active, sigma)] = value
    return weights



def sample_canonical_witness(
    d: int,
    weights: Mapping[TailKey, float],
    p0: float,
    n_samples: int,
    rng: Optional[np.random.Generator] = None,
) -> np.ndarray:
    """
    Exact continuous sampler for the canonical witness copula.

    The sampler first draws the central component or one generator with
    probabilities p0*w_{I,sigma} and 1 - p0*sum w, then samples from the chosen
    canonical geometric component.
    """
    if not (0.0 < p0 < 0.5):
        raise ValueError("p0 must lie in (0, 1/2).")
    if n_samples < 1:
        raise ValueError("n_samples must be positive.")
    if rng is None:
        rng = np.random.default_rng()

    keys = list(weights.keys())
    probs = np.array([p0 * float(weights[k]) for k in keys], dtype=float)
    central = 1.0 - probs.sum()
    if central < -1e-12:
        raise ValueError("Negative central mass: witness system is not admissible at this p0.")
    probs = np.append(probs, max(0.0, central))
    probs /= probs.sum()

    sigma_maps = [dict(zip(I, sigma)) for I, sigma in keys]
    draws = rng.choice(len(probs), size=n_samples, p=probs)
    out = np.empty((n_samples, d), dtype=float)

    for r, j in enumerate(draws):
        if j == len(keys):
            out[r] = p0 + (1.0 - 2.0 * p0) * rng.random(d)
            continue

        z = float(rng.random())
        sigma_map = sigma_maps[j]
        row = np.empty(d, dtype=float)
        for i in range(1, d + 1):
            if i in sigma_map:
                row[i - 1] = p0 * z if sigma_map[i] == "L" else 1.0 - p0 * z
            else:
                row[i - 1] = p0 + (1.0 - 2.0 * p0) * rng.random()
        out[r] = row

    return out



def theoretical_lambda_p_canonical(
    d: int,
    weights: Mapping[TailKey, float],
    p: float,
    p0: float,
    available_signs: Sequence[str] = ("L", "U"),
    target_keys: Optional[Sequence[TailKey]] = None,
) -> Dict[TailKey, float]:
    """
    In the canonical ray geometry and for 0 < p <= p0, Section 4.5 implies
    lambda^(p) = lambda.
    """
    if not (0.0 < p <= p0):
        raise ValueError("This helper is intended only for 0 < p <= p0.")
    return tail_values_from_weights(
        d=d,
        weights=weights,
        available_signs=available_signs,
        target_keys=target_keys,
    )


def empirical_lambda_p(
    samples: np.ndarray,
    p: float,
    target_keys: Sequence[TailKey],
) -> Dict[TailKey, float]:
    """
    Empirical estimator
        hat lambda_{J,tau}(p) = (1 / (M p)) sum_m prod_{j in J} 1{U_j^(m) in T_{tau_j}(p)}.
    """
    if samples.ndim != 2:
        raise ValueError("samples must be a 2D array.")
    M, d = samples.shape
    if M < 1:
        raise ValueError("Need at least one sample.")
    if not (0.0 < p < 0.5):
        raise ValueError("p must lie in (0, 1/2).")

    out: Dict[TailKey, float] = {}
    for key in target_keys:
        J, tau = make_key(*key)
        mask = np.ones(M, dtype=bool)
        for j, s in zip(J, tau):
            x = samples[:, j - 1]
            if s == "L":
                mask &= (x <= p)
            elif s == "U":
                mask &= (x >= 1.0 - p)
            else:
                raise ValueError("Only L/U signs are allowed in target_keys.")
        out[(J, tau)] = float(mask.mean() / p)
    return out


def run_variable_p_diagnostics(
    d: int,
    weights: Mapping[TailKey, float],
    p0: float,
    p_values: Sequence[float],
    n_samples: int,
    available_signs: Sequence[str] = ("L", "U"),
    target_keys: Optional[Sequence[TailKey]] = None,
    rng: Optional[np.random.Generator] = None,
) -> Dict[str, object]:
    """
    Simulate one canonical witness sample and evaluate lambda^(p) versus
    empirical hat lambda^(p) on a grid of p values.
    """
    if rng is None:
        rng = np.random.default_rng()
    if target_keys is None:
        target_keys = list(enumerate_keys(d, available_signs))
    target_keys = [make_key(*k) for k in target_keys]

    samples = sample_canonical_witness(
        d=d,
        weights=weights,
        p0=p0,
        n_samples=n_samples,
        rng=rng,
    )
    lambda_target = tail_values_from_weights(
        d=d,
        weights=weights,
        available_signs=available_signs,
        target_keys=target_keys,
    )

    rows = []
    nonzero_keys = [k for k in target_keys if abs(lambda_target[k]) > 1e-14]
    zero_keys = [k for k in target_keys if abs(lambda_target[k]) <= 1e-14]

    for p in p_values:
        lam_theory = theoretical_lambda_p_canonical(
            d=d,
            weights=weights,
            p=p,
            p0=p0,
            available_signs=available_signs,
            target_keys=target_keys,
        )
        lam_emp = empirical_lambda_p(samples, p, target_keys)

        abs_err = {k: abs(lam_emp[k] - lam_theory[k]) for k in target_keys}
        rows.append({
            "p": float(p),
            "lambda_theory": lam_theory,
            "lambda_empirical": lam_emp,
            "max_abs_error": float(max(abs_err.values()) if abs_err else 0.0),
            "max_abs_error_nonzero": float(max((abs_err[k] for k in nonzero_keys), default=0.0)),
            "max_abs_leakage_zero": float(max((abs(lam_emp[k]) for k in zero_keys), default=0.0)),
        })

    return {
        "d": d,
        "p0": float(p0),
        "weights": {make_key(*k): float(v) for k, v in weights.items()},
        "lambda_target": lambda_target,
        "rows": rows,
    }

def sample_finite_grid_states(
    q: Mapping[State, float],
    n_samples: int,
    rng: Optional[np.random.Generator] = None,
) -> List[State]:
    """
    Sample discrete finite-grid states according to the cell masses q.

    This is a finite-grid diagnostic sampler.  It is intentionally distinct from
    a full geometric sampler on [0,1]^d.
    """
    if n_samples < 1:
        raise ValueError("n_samples must be positive.")
    if rng is None:
        rng = np.random.default_rng()

    states = list(q.keys())
    probs = np.array([float(q[s]) for s in states], dtype=float)
    if np.any(probs < -1e-12):
        raise ValueError("q contains negative masses.")
    probs = np.maximum(probs, 0.0)
    total = probs.sum()
    if total <= 0:
        raise ValueError("q must have positive total mass.")
    probs /= total

    draws = rng.choice(len(states), size=n_samples, p=probs)
    return [states[i] for i in draws]


# ---------------------------------------------------------------------
# Convenience builders for common model classes
# ---------------------------------------------------------------------
def make_ehw_pairwise_targets(matrix: Sequence[Sequence[float]]) \
 -> Tuple[int, Tuple[str, ...], Dict[TailKey, float]]:
    """
    Upper-tail pairwise EHW targets from a symmetric matrix with unit diagonal.
    """
    A = np.asarray(matrix, dtype=float)
    if A.ndim != 2 or A.shape[0] != A.shape[1]:
        raise ValueError("matrix must be square.")
    d = int(A.shape[0])
    if d < 2:
        raise ValueError("Need d >= 2.")
    if np.max(np.abs(np.diag(A) - 1.0)) > 1e-12:
        raise ValueError("The diagonal of the EHW matrix must be 1.")

    targets: Dict[TailKey, float] = {}
    for i in range(d):
        for j in range(i + 1, d):
            targets[((i + 1, j + 1), ("U", "U"))] = float(A[i, j])
    return d, ("U",), targets


def make_signed_pairwise_targets(
    d: int,
    pair_values: Mapping[TailKey, float],
) -> Tuple[int, Tuple[str, ...], Dict[TailKey, float]]:
    """
    Signed pairwise targets: only order-2 coefficients are supplied.
    """
    return d, ("L", "U"), {make_key(*k): float(v) for k, v in pair_values.items()}


def make_ehw_mv_targets(
    d: int,
    subset_values: Mapping[ActiveSet, float],
) -> Tuple[int, Tuple[str, ...], Dict[TailKey, float]]:
    """
    Upper-tail multivariate targets.  Each active subset I is interpreted as the
    all-U key (I, U...U).
    """
    targets: Dict[TailKey, float] = {}
    for active, value in subset_values.items():
        active = tuple(active)
        targets[make_key(active, ("U",) * len(active))] = float(value)
    return d, ("U",), targets


def make_signed_mv_targets(
    d: int,
    values: Mapping[TailKey, float],
) -> Tuple[int, Tuple[str, ...], Dict[TailKey, float]]:
    """
    General signed multivariate targets: arbitrary orders and arbitrary L/U sign
    patterns.
    """
    return d, ("L", "U"), {make_key(*k): float(v) for k, v in values.items()}


# ---------------------------------------------------------------------
# Five-dimensional signed benchmark from the paper
# ---------------------------------------------------------------------
def benchmark_signed_mv_5d(alpha: float) -> Tuple[int, Tuple[str, ...], Dict[TailKey, float]]:
    """
    Complete 5D signed benchmark specification from Section 9.
    """
    d = 5
    spec = complete_spec_template(
        d=d,
        available_signs=("L", "U"),
        singleton_value=1.0,
        default_other=0.0,
    )

    # Pair structure on {1,2} and {3,4}
    spec[make_key((1, 2), ("U", "L"))] = 1.0
    spec[make_key((1, 2), ("L", "U"))] = 1.0
    spec[make_key((3, 4), ("U", "L"))] = 1.0
    spec[make_key((3, 4), ("L", "U"))] = 1.0

    # Pairs involving coordinate 5
    for i in (1, 2, 3, 4):
        for signs in product(("L", "U"), repeat=2):
            spec[make_key((i, 5), signs)] = float(alpha)

    # Triple structure on {1,2,5} and {3,4,5}
    triples = [
        ((1, 2, 5), ("U", "L", "U")),
        ((1, 2, 5), ("U", "L", "L")),
        ((1, 2, 5), ("L", "U", "U")),
        ((1, 2, 5), ("L", "U", "L")),
        ((3, 4, 5), ("U", "L", "U")),
        ((3, 4, 5), ("U", "L", "L")),
        ((3, 4, 5), ("L", "U", "U")),
        ((3, 4, 5), ("L", "U", "L")),
    ]
    for key in triples:
        spec[make_key(*key)] = float(alpha)

    return d, ("L", "U"), spec


def benchmark_signed_mv_5d_expected_weights(alpha: float) -> Dict[TailKey, float]:
    """
    Explicit witness solution for the 5D benchmark.
    """
    weights: Dict[TailKey, float] = {}

    for key in [
        ((1, 2, 5), ("U", "L", "U")),
        ((1, 2, 5), ("U", "L", "L")),
        ((1, 2, 5), ("L", "U", "U")),
        ((1, 2, 5), ("L", "U", "L")),
        ((3, 4, 5), ("U", "L", "U")),
        ((3, 4, 5), ("U", "L", "L")),
        ((3, 4, 5), ("L", "U", "U")),
        ((3, 4, 5), ("L", "U", "L")),
    ]:
        weights[make_key(*key)] = float(alpha)

    for key in [
        ((1, 2), ("U", "L")),
        ((1, 2), ("L", "U")),
        ((3, 4), ("U", "L")),
        ((3, 4), ("L", "U")),
    ]:
        weights[make_key(*key)] = float(1.0 - 2.0 * alpha)

    weights[make_key((5,), ("U",))] = float(1.0 - 4.0 * alpha)
    weights[make_key((5,), ("L",))] = float(1.0 - 4.0 * alpha)
    return weights


def run_benchmark_5d_report(alpha: float, p0: Optional[float] = None, tol: float = 1e-9) \
 -> Dict[str, object]:
    """
    Solve the 5D benchmark by complete inversion and compare with the analytic
    witness solution.
    """
    d, signs, spec = benchmark_signed_mv_5d(alpha)
    report = complete_recovery_report(d, spec, signs, p0=p0, tol=tol)
    expected = benchmark_signed_mv_5d_expected_weights(alpha)

    all_keys = set(report["weights"].keys()) | set(expected.keys())
    max_diff = max(abs(report["weights"].get(k, 0.0) - expected.get(k, 0.0)) \
     for k in all_keys) if all_keys else 0.0

    report["expected_weights"] = expected
    report["max_abs_weight_diff_vs_closed_form"] = float(max_diff)
    report["alpha_compatible_interval_check"] = bool(0.0 <= alpha <= 0.25)
    return report


# ---------------------------------------------------------------------
# Minimal examples
# ---------------------------------------------------------------------
def minimal_demo_ehw() -> Dict[str, object]:
    """
    Small upper-tail EHW example (pairwise, d=3).
    """
    matrix = np.array([
        [1.0, 0.4, 0.3],
        [0.4, 1.0, 0.2],
        [0.3, 0.2, 1.0],
    ])
    d, signs, targets = make_ehw_pairwise_targets(matrix)
    return solve_witness_lp(
        d=d,
        target_lambdas=targets,
        available_signs=signs,
        enforce_margins=True,
        p0=0.10,
        mode="feasibility",
    )


def minimal_demo_signed_partial() -> Dict[str, object]:
    """
    Small signed partial example (d=3).
    """
    d = 3
    targets = {
        make_key((1, 2), ("U", "U")): 0.5,
        make_key((1, 3), ("U", "L")): 0.5,
        make_key((2, 3), ("L", "U")): 0.5,
    }
    return solve_witness_lp(
        d=d,
        target_lambdas=targets,
        available_signs=("L", "U"),
        enforce_margins=True,
        p0=0.10,
        mode="feasibility",
    )


def _print_sparse_weights(weights: Mapping[TailKey, float], tol: float = 1e-10) -> None:
    items = [(k, v) for k, v in weights.items() if abs(v) > tol]
    for key, value in sorted(items, key=lambda kv: (len(kv[0][0]), kv[0][0], kv[0][1])):
        print(f"  {format_key(key):>18s}  ->  {value:.12g}")


def main() -> None:
    print("=== minimal upper-tail EHW demo ===")
    demo = minimal_demo_ehw()
    print("success:", demo["success"])
    print("message:", demo["message"])
    if demo["success"]:
        print("nonzero weights:")
        _print_sparse_weights(demo["weights"])
        print("admissible p0 max:", demo["admissible_p0_max"])
    print()

    print("=== signed partial demo ===")
    demo2 = minimal_demo_signed_partial()
    print("success:", demo2["success"])
    print("message:", demo2["message"])
    if demo2["success"]:
        print("nonzero weights:")
        _print_sparse_weights(demo2["weights"])
        print("admissible p0 max:", demo2["admissible_p0_max"])
    print()

    print("=== 5D signed benchmark demo (alpha = 0.20) ===")
    bench = run_benchmark_5d_report(alpha=0.20, p0=0.10)
    print("success:", bench["success"])
    print("max abs lambda residual:", bench["max_abs_lambda_residual"])
    print("max abs weight diff vs closed form:", bench["max_abs_weight_diff_vs_closed_form"])
    if "central_mass" in bench:
        print("central mass at p0=0.10:", bench["central_mass"])
    print("nonzero weights:")
    _print_sparse_weights(bench["weights"])


if __name__ == "__main__":
    main()
\end{verbatim}
\endgroup
\end{multicols}
}


\begin{thebibliography}{99}

\bibitem[Boyd and Vandenberghe(2004)]{BoydVandenberghe2004}
S. Boyd and L. Vandenberghe.
\newblock \emph{Convex Optimization}.
\newblock Cambridge University Press, Cambridge, 2004.

\bibitem[De Luca and Rivieccio(2012)]{DeLucaRivieccio2012}
G.~De Luca and G.~Rivieccio.
\newblock Multivariate tail dependence coefficients for Archimedean copulae.
\newblock In A.~Di Ciaccio, M.~Coli, and J.~M. Angulo Ibáñez, editors,
\emph{Advanced Statistical Methods for the Analysis of Large Data-Sets},
pages 287--296. Springer, Berlin, 2012.

\bibitem[Embrechts et~al.(2016)Embrechts, Hofert, and Wang]{EHW2016}
P. Embrechts, M. Hofert, and R. Wang.
\newblock \emph{Bernoulli and Tail-Dependence Compatibility}.
\newblock The Annals of Applied Probability, 26(3):1636--1658, 2016.

\bibitem[Gudendorf and Segers(2010)]{GudendorfSegers2010}
G.~Gudendorf and J.~Segers.
\newblock Extreme-value copulas.
\newblock In P.~Jaworski, F.~Durante, W.~K. Härdle, and T.~Rychlik, editors,
\emph{Copula Theory and Its Applications}, pages 127--145. Springer, Berlin,
2010.

\bibitem[Jaynes(2003)]{Jaynes2003}
E.~T. Jaynes.
\newblock \emph{Probability Theory: The Logic of Science}.
\newblock Edited by G.~L. Bretthorst, Cambridge University Press, Cambridge, 2003.

\bibitem[Joe(2014)]{Joe2014}
H. Joe.
\newblock \emph{Dependence Modeling with Copulas}.
\newblock Chapman \& Hall/CRC, Boca Raton, 2014.

\bibitem[Krause et~al.(2018)Krause, Scherer, Schwinn, and Werner]{KrauseScherer2018}
D.~Krause, M.~Scherer, J.~Schwinn, and R.~Werner.
\newblock Membership testing for Bernoulli and tail-dependence matrices.
\newblock {\em Journal of Multivariate Analysis}, 168:240--260, 2018.

\bibitem[McNeil et~al.(2015)McNeil, Frey, and Embrechts]{MFE2015}
A.~J. McNeil, R. Frey, and P. Embrechts.
\newblock \emph{Quantitative Risk Management: Concepts, Techniques and Tools}.
\newblock Revised edition, Princeton University Press, Princeton, NJ, 2015.

\bibitem[Milek(2014)]{MB11}
J. Milek.
\newblock \emph{Multivariate B11 copula family for risk capital aggregation:
a copula engineering approach}.
\newblock Presentation slides for an invited talk in ``Talks in Financial and
Insurance Mathematics,'' ETH Zurich, March 6, 2014.
\newblock Prepared on invitation of Prof.\ Paul Embrechts.
\newblock Available at
\url{https://www2.math.ethz.ch/t3/fileadmin/math/ndb/00014/04970/ETH.March.6.2014.Zurich.Milek.v.3.pdf}.

\bibitem[Nelsen(2006)]{Nelsen2006}
R.~B. Nelsen.
\newblock \emph{An Introduction to Copulas}.
\newblock Springer, second edition, 2006.

\bibitem[OEIS Foundation Inc.(2026)]{OEISA004211}
OEIS Foundation Inc.
\newblock Integer sequence A004211.
\newblock \emph{The On-Line Encyclopedia of Integer Sequences}.
\newblock \url{https://oeis.org/A004211}.

\bibitem[Rényi(1961)]{Renyi1961}
A.~R\'enyi.
\newblock On measures of entropy and information.
\newblock In J.~Neyman, editor, \emph{Proceedings of the Fourth Berkeley
Symposium on Mathematical Statistics and Probability, Volume~1}, pages
547--561. University of California Press, Berkeley, 1961.

\end{thebibliography}
\end{document}